\documentclass[reqno, 11pt]{article}

\usepackage{fullpage}

\usepackage[centertags]{amsmath}
\usepackage{amsmath}
\usepackage{amsfonts}
\usepackage{amssymb}
\usepackage{amsthm}
\usepackage{enumerate}
\usepackage{subfig}
\usepackage{tikz}

\usepackage{algorithm,algorithmic}
\usepackage{dsfont}

\usepackage{newlfont}
\usepackage{color}

\usepackage{authblk}

\usepackage{stmaryrd}
\SetSymbolFont{stmry}{bold}{U}{stmry}{m}{n}

\usepackage{stmaryrd}
\usepackage{mathrsfs}

\usepackage{fancyhdr}
\usepackage{graphicx}
\usepackage{fancybox}
\usepackage{setspace}
\usepackage{cleveref}

\newtheorem{thm}{Theorem}

\newtheorem{lem}[thm]{Lemma}
\newtheorem{prop}[thm]{Proposition}

\theoremstyle{definition}
\newtheorem{defn}[thm]{Definition}
\newtheorem{assump}[thm]{Assumption}

\theoremstyle{remark}
\newtheorem{rem}[thm]{Remark}
\newtheorem{nrem}[thm]{Notational Remark}

\newcommand{\R} {\mathbb{R}}

\newcommand{\E} {\mathbb{E}}

\newcommand{\p} {\mathbb{P}}

\newcommand{\indi}{\mathds{1}}

\newcommand{\caG}{{\mathcal G}}
\newcommand{\caN}{{\mathcal N}}

\newcommand{\caX}{{\mathcal X}}
\newcommand{\caY}{{\mathcal Y}}
\newcommand{\caL}{{\mathcal L}}
\newcommand{\caW}{{\mathcal W}}

\newcommand{\bsH}{{\boldsymbol H}}

\newcommand{\bsv}{{\boldsymbol v}}

\newcommand{\bsx}{{\boldsymbol x}}
\newcommand{\bsy}{{\boldsymbol y}}
\newcommand{\bsz}{{\boldsymbol z}}

\newcommand{\bss}{{\boldsymbol \sigma}}

\newcommand{\wt}{\widetilde}
\newcommand{\ol}{\overline}

\newcommand{\beq}{ \begin{equation} }
\newcommand{\eeq}{ \end{equation} }

\newcommand{\dd}{\mathrm{d}}

\newcommand{\SNR}{\omega}
\newcommand{\cn}{\tau}
\newcommand{\F}{\cn_1}
\newcommand{\G}{\cn_2}

\makeatletter
\def\blfootnote{\xdef\@thefnmark{}\@footnotetext}
\makeatother

\numberwithin{equation}{section} 
\numberwithin{thm}{section}

\title{Universality of the fluctuations of the free energy in generalized Sherrington--Kirkpatrick models and the log likelihood ratio in spiked Wigner models}

\author{Hyunsuk Choo\footnote{Department of Mathematical Sciences, KAIST, Daejeon, 34141, Korea \newline email: \texttt{hyunsuk.choo@kaist.ac.kr}},
Yoochan Han\footnote{Department of Mathematical Sciences, KAIST, Daejeon, 34141, Korea \newline email: \texttt{yoochan.han@kaist.ac.kr}},
and Ji Oon Lee\footnote{Department of Mathematical Sciences, KAIST, Daejeon, 34141, Korea \newline email: \texttt{jioon.lee@kaist.edu}}}

\begin{document}

\maketitle

\begin{abstract}
We consider the fluctuations of the free energy in generalized Sherrington--Kirkpatrick models and the log likelihood ratio of spiked Wigner models in the high temperature/subcritical regime. We prove that the limiting laws of the fluctuations are Gaussian under suitable assumptions, and the result is universal in the sense that it does not depend on the distribution of the disorder or the prior except that the means and the variances of the limiting laws depend on a few parameters of the model. The proof is based on the multigraph expansion that provides a unified approach to analyze both models.
\end{abstract}

\section{Introduction}

The theoretical study of the spin glass was initiated about half a century ago, where the main motivation of the study was to understand the behavior of the magnetic properties of substitutional alloys \cite{sherrington202550}. Since then, it has generated plenty of literature in various fields of study, including statistical physics, probability theory, and computer science. 

The Sherrington--Kirkpatrick (SK) model \cite{sherrington1975solvable} is one of the most canonical and well-studied models in the theory of spin glass. The SK model is defined by the Hamiltonian
\[
	H_N(\bss) = \frac{1}{\sqrt{N}} \sum_{i<j} J_{ij} \sigma_i \sigma_j,
\]
where the disorder $J_{ij}$'s ($1 \leq i < j \leq N$) are independent standard Gaussian random variables and the spin vector $\bss = (\sigma_1, \dots, \sigma_N) \in \{-1, 1 \}^N$. The most fundamental objects of the study are the partition function and the free energy, defined by
\beq \label{eq:partition_function}
	Z_N(\beta) := \frac{1}{2^N} \sum_{\bss \in \{-1, 1 \}^N} \exp (\beta H_N(\bss)), \quad F_N(\beta) := \frac{1}{N} \log Z_N(\beta),
\eeq
where the parameter $\beta > 0$ is the inverse temperature. The deterministic limit $F(\beta)$ of the free energy $F_N(\beta)$ was first predicted by Parisi \cite{parisi1979infinite,parisi1980sequence} and rigorously proved by Talagrand \cite{talagrand2006parisi}. The limit is universal in the sense that it does not depend on the detail of the interaction $J_{ij}$ under suitable assumptions on the moments of the disorder \cite{guerra2002thermodynamic,carmona2006universality}.

The SK model can be generalized by changing the distribution of the spins (called the prior) from the Ising type (or Rademacher prior) to an arbitrary probability measure, while maintaining the mean-field nature. One of the most fundamental generalizations of the SK model is the spherical Sherrington--Kirkpatrick (SSK) model, introduced by Kosterlitz, Thouless, and Jones \cite{kosterlitz1976spherical}, where the prior is given as the uniform measure on the (hyper)sphere of radius $\sqrt{N}$ in $\R^N$ (called the spherical prior). The free energy of the SSK model was computed in \cite{kosterlitz1976spherical,crisanti1992spherical} and later rigorously proved in \cite{talagrand2006free}. There are also numerous works on the free energy of generalized SK models with different priors; see, e.g., \cite{arous2001aging,panchenko2005free,panchenko2018free}.

The second order term, or the fluctuation, of the free energy has also been studied. In the seminal work by Aizenman, Lebowitz, and Ruelle \cite{AizenmanLebowitzRuelle}, it was proved that the (rescaled) fluctuation of the free energy in the SK model, $N(F_N(\beta) - F(\beta))$, is asymptotically Gaussian in the high temperature regime ($\beta < 1$). Their result is universal under suitable assumptions on the disorder, except that the mean and the variance of the fluctuation depend on the fourth moment of the disorder. Other methods have also been used to prove the asymptotic normality of the free energy of the SK model \cite{frohlich1987some,comets1995sherrington,talagrand2010mean2}. Analogous results were proved for the diluted SK model \cite{kosters2006fluctuations}, the SK model with weak external field \cite{guerra2002central,chen2017fluctuations,dey2023mean}, and the SK model with the Curie--Weiss interaction \cite{banerjee2020fluctuation}. For more results about the free energy fluctuation, we refer to \cite{collins2025free} and the references therein.

While the free energy fluctuation is an interesting object per se, it is also of fundamental importance due to its natural connection to the detection problem in spiked Wigner model. A spiked Wigner matrix $M$ is of the form
\beq \label{eq:rank-1}
	M = W + \sqrt{\lambda} \bsx \bsx^T,
\eeq
where the signal (or the spike) $\bsx$ is a unit vector in $\R^N$ and the noise $W$ is a Wigner matrix. (See Definition \ref{defn:Wigner}.) The parameter $\lambda$ denotes the strength of the signal, known as the signal-to-noise ratio (SNR). The detection means to test whether the given data matrix is drawn from a spiked distribution $(\lambda > 0)$ or an unspiked distribution $(\lambda=0)$. More precisely, it is the hypothesis test between $\bsH_0 : \lambda = 0$ and $\bsH_1 : \lambda = \SNR > 0$ for some pre-determined parameter $\SNR$. A natural object we can use for the detection is the Radon-Nikodym derivative involving the unspiked distribution (under $\bsH_0$) and the spiked distribution (under $\bsH_1$), which is known as the likelihood ratio in statistics. It is well-known that the tests based on the likelihood ratio are optimal in terms of the sum of Type-I and Type-II errors (or the tradeoff between the errors), due to Neyman--Pearson lemma \cite{neyman1933ix}. Thus, the optimal error of the detection can be computed from the fluctuation of the likelihood ratio.

In the simplest case where the Wigner matrix $W$ is a GOE matrix, the likelihood ratio given $\bsH_0$ reduces to
\[
	\caL (\SNR) = \E_\caX \left[ e^{-\SNR N\| \bsx \|^4 /4} \prod_{i<j} \exp \left( \sqrt{\SNR} N W_{ij} x_i x_j \right) \prod_{k=1}^N \exp \left( \frac{\sqrt{\SNR}}{2} N W_{kk} x_k^2 \right) \right].
\]
(See Definitions \ref{defn:LR_normal} and \ref{defn:LR_iid} for the definition of the likelihood ratio when $W$ is non-Gaussian.) When the prior is Rademacher, i.e., $x_i$'s are i.i.d. with $\p(\sqrt{N} x_i = 1) = \p(\sqrt{N} x_i = -1) = 1/2$, we can see that the log likelihood ratio
\[
	\log \caL (\SNR) = -\frac{\SNR N}{4} + \sum_{k=1}^N \frac{\sqrt{\SNR}}{2} W_{kk} + \log \left( \frac{1}{2^N} \sum_{\sqrt{N} \bsx \in \{-1, 1 \}^N} \exp \left( \sqrt{\SNR} N \sum_{i<j} W_{ij} x_i x_j \right) \right),
\]
and if $\SNR < 1$, its limiting fluctuation can be readily obtained from the fluctuation of the free energy in the SK model after setting $\beta = \sqrt{\SNR}$, $J_{ij} = \sqrt{N} W_{ij}$, and $\sigma_i = \sqrt{N} x_i$. A similar result for more general priors was proved by El Alaoui, Krzakala, and Jordan \cite{AlaouiJordan2018} using the approaches based on the study of the SK model. For more discussion on the results and techniques that were developed for the SK model and applied to the spiked Wigner model with the Gaussian disorder, we refer to \cite{zdeborova2016statistical,bandeira2018notes} and references therein.

The connection between the free energy and the log likelihood ratio is lost when the disorder (or noise) is non-Gaussian, and not much is known about the fluctuation of the free energy or the log likelihood ratio in this case, except for the SSK model. In the SSK model, due to the rotational symmetry, the spectral properties of the disorder completely determine the free energy via the integral representation formula \cite{Baik-Lee2016}, which can be analyzed in detail by applying the machinery of random matrix theory. It is notable that the variances of the fluctuations of the free energy in the SK model and the SSK model coincide, whereas the means do not. (See Remark \ref{rem:comparison}.) Even less is known about the fluctuation of the log likelihood ratio; to our best knowledge, the only known results are for the (two block) stochastic block model \cite{banerjee2018contiguity} and for the model with Rademacher prior \cite{chung2025asymptotic}. In particular, the fluctuation of the log likelihood ratio with the spherical prior, which is one of the most natural priors in the detection problem, could not be proved, even with the random matrix theory.

Nevertheless, when comparing the known results, we can find striking similarity among them. For example, the mean and the variance of the limiting Gaussian distribution for the free energy in SSK model are
\[
	m = \frac{1}{4} \log (1- \beta^2) - \frac{\beta^2}{2} + \frac{\beta^4 (w_4 -3)}{8}, \quad V = -\frac{1}{2} \left( \log (1- \beta^2) + \beta^2 - \frac{\beta^4 (w_4 -3)}{4} \right),
\]
where $w_4$ is the fourth moment of the disorder, and those for the log likelihood ratio with Rademacher prior are, if we ignore the diagonal entries,
\[
	m= \frac{1}{4} \log (1-\SNR F_p) + \frac{\SNR F_p}{4} + \frac{\SNR^2 (2F_p^2 - G_p)}{16}, \quad V= -\frac{1}{2} \left( \log (1-\SNR F_p) + \SNR F_p + \frac{\SNR^2}{4} (2F_p^2 - G_p) \right)
\]
where $F_p$ and $G_p$ are some constants that depend only on the density $p$. (See Theorem \ref{thm:log_LR} for the precise definition of $F_p$ and $G_p$.) This suggests that there is certain inter-model universality in the level of the fluctuation in the high temperature regime except a few model-specific parameters such as $w_4$, $F_p$, and $G_p$.

\subsection{Main results}

In this paper, we prove the limit theorems for the fluctuations of the free energy and the log likelihood ratio for large classes of disorders and priors in the high temperature regime. We first state the result on the free energy. Recall that a real-valued random variable $z$ is strictly sub-Gaussian if $\E[e^{\alpha z}] \leq e^{\alpha^2 /2}$ for any (possibly $N$-dependent) constant $\alpha$.

\begin{thm} \label{thm:free_energy_1}
Suppose that $\sqrt{N} W_{ij}$ $(1 \leq i<j \leq N)$ are i.i.d., symmetric, and strictly sub-Gaussian random variables with variance $1$. Suppose that $\sqrt{N} x_k$ $(1 \leq k \leq N)$ are i.i.d., symmetric, and strictly sub-Gaussian random variables with variance $1$, independent of $\sqrt{N} W_{ij}$. Let
\beq \label{eq:free_energy_def}
	Z_N \equiv Z_N(\beta) = \E_{\caX} \left[ \exp \left( \beta N \sum_{i<j} W_{ij} \frac{x_i x_j}{\| \bsx \|^2} \right) \right], \qquad F_N \equiv F_N(\beta) = \frac{1}{N} \log Z_N.
\eeq
(When $\| \bsx \| = 0$, we define $x_i x_j / \| \bsx \|^2 = 0$ for all $i, j$.) Then, for any $\beta \in (0, 1)$,
\[
	F_N(\beta) \to F(\beta) := \beta^2 /4
\]
and
\[
	N(F_N(\beta) - F(\beta) ) \Rightarrow \caN(m_F, V_F),
\]
as $N \to \infty$, i.e., $N(F_N(\beta) - F(\beta) )$ converges in distribution to the Gaussian with mean $m_F$ and the variance $V_F$ as $N \to \infty$. Here,
\[
	m_F = \frac{1}{4} \log (1- \beta^2) + \frac{\beta^2 (1-m_4)}{4} + \frac{\beta^4 (w_4-3)(m_4^2 -3)}{48}
\]
and
\[
	V_F = -\frac{1}{2} \left( \log (1- \beta^2) + \beta^2 - \frac{\beta^4 (w_4-3)}{4} \right),
\]
where we let $w_4 = N^2 \E[W_{ij}^4]$ and $m_4 = N^2 \E [x_1^4]$.
\end{thm}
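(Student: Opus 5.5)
The plan is a second-moment/cycle-decomposition argument in the spirit of Aizenman--Lebowitz--Ruelle, reorganized as a multigraph expansion so that neither Gaussianity of the disorder nor a Rademacher prior is needed.

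\textbf{Step 1: normalization.} Set $y_i = x_i/\|\bsx\|$. By strict sub-Gaussianity, $\|\bsx\|^2 = 1 + O(N^{-1/2})$ with exponentially small exceptional probability, and the event $\|\bsx\|=0$ or $|\|\bsx\|^2-1|>N^{-1/2+\epsilon}$ contributes negligibly to $Z_N$. For each edge write
\[
e^{\beta N W_{ij} y_i y_j} = 1 + \beta N W_{ij} y_i y_j + \tfrac12 \beta^2 N^2 W_{ij}^2 y_i^2 y_j^2 + \dots .
\]
I would then factor out the edge-diagonal part $\exp\bigl(\sum_{i<j}\log \E_W e^{\beta N W_{ij} y_iy_j}\bigr)$, whose expansion produces
\[
\tfrac{\beta^2 N^2}{2}\sum_{i<j} y_i^2 y_j^2 + \tfrac{\beta^4 N^4 (w_4-3)}{24 N^2}\sum_{i<j} y_i^4 y_j^4 + \cdots .
\]
Using $\sum_i y_i^2=1$, the first term equals $\tfrac{\beta^2 N}{4}\bigl(1-\sum_i y_i^4\bigr)$, which gives $F(\beta)=\beta^2/4$ and the shift $\beta^2(1-m_4)/4$. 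The second term gives the $\beta^4(w_4-3)m_4^2$-type contribution; the precise constant $(m_4^2-3)/48$ will come from combining it with the fluctuation of $\sum_{i<j}N W_{ij}^2$ around $N/2$.

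\textbf{Step 2: multigraph expansion.} Expand the remaining product $\prod_{i<j}(1+f_{ij})$, where $f_{ij}$ is centered in $W$, as a sum over edge sets (multigraphs after taking powers of $W_{ij}$). Averaging over $\caX$ with symmetric $x_i$ kills every graph that has a vertex of odd degree. Graphs whose components are simple cycles of length $k$ give the statistics
\[
C_k = N^{k/2}\sum_{\text{$k$-cycles}} \prod_{e} W_e ,
\]
weighted by $\E\prod y_i^2 \approx N^{-k}$. The replacement of $x_i^2$ by $1/N$ costs only a deterministic correction, which is absorbed into Step 1. I would then show that
\[
Z_N \approx e^{NF + \text{const}}\, Y_{N,K}, \qquad Y_{N,K} = \prod_{k=3}^{K}\Bigl(1+\dots\Bigr) \approx \exp\Bigl(\sum_{k=3}^K \bigl(\tfrac{\beta^k}{2k} C_k - \tfrac{\beta^{2k}}{4k}\bigr)\Bigr),
\]
where the cycles of length $k\le 2$ have been absorbed into the diagonal term.

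\textbf{Step 3: second-moment comparison.} Show that
\[
\lim_{K\to\infty}\limsup_{N\to\infty} \E\Bigl|\frac{Z_N}{\E Z_N\, Y_{N,K}} - 1\Bigr|^2 = 0 .
\]
The second moment $\E_W Z_N^2$ reduces to a two-replica expectation of roughly $\exp\bigl(\beta^2 N R^2/2\bigr)$ in the overlap $R=\sum_i y_i y_i'$. This is finite and $\to(1-\beta^2)^{-1/2}\times$(explicit factor) for $\beta<1$, precisely because strict sub-Gaussianity of $\sqrt N x_i$ makes $\sqrt N R$ sub-Gaussian with variance proxy $1$. Higher-order cumulants of $W$ enter only through $w_4$ at this order. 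Finally, apply the known CLT for cycle counts of Wigner matrices, namely that the $C_k$ are asymptotically independent $\caN(0,2k)$. This gives $\log Y_{N,K}\Rightarrow \caN\bigl(-\sum_{k\ge 3}\beta^{2k}/(4k),\ \sum_{k\ge 3}\beta^{2k}/(2k)\bigr)$. Summing the series yields the $\log(1-\beta^2)$ terms in $m_F$ and $V_F$, and the $k\le2$ contributions together with the $W_{ij}^2$-fluctuation produce the $\beta^2$ and $\beta^4(w_4-3)$ corrections.

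\textbf{Main obstacle.} I expect Step 3's combinatorial control to be the hardest part. One must bound uniformly in $N$ the contributions of multigraphs that are not disjoint unions of simple cycles, including repeated edges, which bring in higher moments of $W$, and vertices of degree $\ge4$, which bring in $m_4$ and higher moments of $x$. One must also track the random normalization $\|\bsx\|$ inside every vertex weight. I would first try to dominate the sum over all multigraphs by the corresponding Gaussian/Rademacher sum, using sub-Gaussian moment comparisons $\E[z^{2p}]\le (2p-1)!!$. This reduces the bound to the classical convergent series valid for $\beta<1$.
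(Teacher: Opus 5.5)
There is a genuine gap. The object you compare $Z_N$ with is wrong, and the step you yourself flag as the main obstacle is not closed by the argument you sketch.

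\emph{The comparison object.} Your $Y_{N,K}$ contains only simple cycles with $3\le k\le K$, and you absorb the $k\le 2$ pieces into the annealed factor $\exp(\sum_{i<j}\log\E_W e^{\beta N W_{ij}y_iy_j})$. That factor is deterministic given $\bsx$. The double-edge statistic $D=\frac1N\sum_{i<j}(NW_{ij}^2-1)$, by contrast, is random. This is the paper's $P^{(2)}_{ij}$, i.e.\ the term $L_2$. It is an $O(1)$ fluctuation, asymptotically $\caN(0,(w_4-1)/2)$ and independent of the cycles, and it enters as $\exp(\frac{\beta^2}{2}D-\frac{\beta^4(w_4-1)}{16})$. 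Indeed $V_F=\sum_{k\ge 3}\beta^{2k}/(2k)+\beta^4(w_4-1)/8$. The second-moment ratio of the truncated partition function is $(1-\beta^2)^{-1/2}e^{-\beta^2/2+\beta^4(w_4-3)/8}$, which exceeds $\lim_K\lim_N\E Y_{N,K}^2$ by the factor $e^{\beta^4(w_4-1)/8}$. So the $L^2$ statement of Step 3 cannot hold unless $\sqrt N W_{ij}$ is Rademacher.

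This factor is also what produces the $-3$ in $(m_4^2-3)/48$. Its mean correction $-\beta^4(w_4-1)/16$ combines with the $+\beta^4/8$ left over when $k=2$ is removed from the $\log(1-\beta^2)$ series. Relatedly, the annealed factor gives only $-\beta^2 m_4/4$ and $\beta^4(w_4-3)m_4^2/48$. The $+\beta^2/4$ in $m_F$ comes from the mean of the cycle series, so crediting $\beta^2(1-m_4)/4$ to Step 1 double counts. Two smaller slips: $\frac{\beta^2N^2}{2}$ should be $\frac{\beta^2N}{2}$, and your $C_k$ has variance of order $N^k$, not $2k$.

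\emph{The prior-universality step.} Replacing $x_i^2$ by $1/N$ is not ``only a deterministic correction''. For multigraphs with a vertex of degree $\ge 4$, the prior factor $Y(\gamma)$ differs from $1$ while $Z(\gamma)$ is random and centered. So one must show $\sum_\gamma N^{-m^\gamma}\E[Z(\gamma)^2](Y(\gamma)-1)^2=o(1)$ over multigraphs of unbounded size. Your domination plan cannot give this.
\begin{itemize}
\item Comparison with Rademacher goes the wrong way: $\E z^{2p}\ge(\E z^2)^p=1$, so $Y(\gamma)>1$ whenever a degree-$\ge 4$ vertex is present.
\item Comparison with Gaussian spins via $\E z^{2p}\le(2p-1)!!$ fails because the normalization $\bsx/\|\bsx\|$ and the truncation destroy the product structure of $Y(\gamma)$. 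Without them, for i.i.d.\ Gaussian spins the natural majorant $\E\exp(\beta^2N\langle\bsx,\bsx'\rangle^2/2)$ is $+\infty$: conditionally on $\bsx'$ it equals $(1-\beta^2\|\bsx'\|^2)^{-1/2}$, or $+\infty$ on the positive-probability event $\|\bsx'\|^2\ge\beta^{-2}$. So there is no classical convergent series to reduce to.
\end{itemize}

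The idea you are missing is the paper's. Expand in the centered, mutually orthogonal coefficients $P^{(n)}_{ij}$, so that the $L^2$ error is exactly this weighted sum (higher multiplicities are handled through similarity classes). Then specialize to GOE disorder, where the $P^{(n)}_{ij}$ are Hermite polynomials. This identifies $\sum_\gamma N^{-m^\gamma}\E[Z(\gamma)^2]Y(\gamma)^2$ with the truncated second moment $\E[\indi(\Omega(\bsx))\indi(\Omega(\bsx'))\exp(N\omega\langle\bsx,\bsx'\rangle^2/2)]\to(1-\omega)^{-1/2}$, which is also the Rademacher value. That forces the $Y>1$ part to be $o(1)$ (Lemma~\ref{lem:Rademacher_difference_positive}) and yields the cutoff estimates.

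Alternatively, a genuine small-subgraph-conditioning argument would be a valid and different route that avoids multigraph control. There the prior enters only through $\E Z_N$, the CLT for the normalized overlap, and an Assumption~\ref{assump:second_moment}-type bound for uniform integrability. But it needs the cross moment $\E[Z_N Y_{N,K}]/\E Z_N$, i.e.\ the means of $S_k=\sum_{k\text{-cycles}}\prod_e W_e$ and of $D$ under the $Z_N$-tilted law (about $\beta^k/(2k)$ and $\beta^2(w_4-1)/4$). Your proposal never addresses this.

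In either route, discarding atypical $\bsx$ requires the pointwise bound $\E_W[e^{-N\beta^2/4}\exp(\beta N\sum_{i<j}W_{ij}y_iy_j)]\le 1$. This comes from strict sub-Gaussianity of the \emph{disorder}, not the prior. Small prior probability alone does not beat $Z_N\approx e^{N\beta^2/4}$.
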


Theorem \ref{thm:free_energy_1} establishes the universal limiting behavior of the free energy fluctuation in the high temperature regime ($\beta < 1$) except that (i) the mean and the variance depend on the rescaled fourth moment of the disorder (or the fourth cumulant $\kappa_4 : =w_4 -3$) and (ii) the mean depends on the rescaled fourth moment of the prior. (See also Remark \ref{rem:comparison}.) The threshold is optimal, since the mean and the variance in Theorem \ref{thm:free_energy_1} diverges as $\beta \nearrow 1$. In the low temperature regime ($\beta > 1$), the limiting behavior of the free energy fluctuation is believed to be completely different from that in the high temperature regime. (We refer to \cite{Baik-Lee2016} and references therein for more discussion.) We also remark that Theorem \ref{thm:free_energy_1} implies the universality of the limiting free energy in the high temperature regime, which is given by $\beta^2/4$. 

Note that the spins are strictly normalized in the sense that the spin vector $\bsx/\| \bsx \|$ is always a unit vector; see the discussion below Theorem \ref{thm:free_energy} for more information. As special cases, if $\sqrt{N} x_k$'s are Rademacher, our model reduces to the SK model with general disorder. Also, if $\sqrt{N} x_k$'s are standard Gaussian, we recover the SSK model with general disorder. Theorem \ref{thm:free_energy_1} also holds even when the disorder and the prior are not strictly sub-Gaussian under suitable assumptions; the results for more general cases will be stated in Section \ref{subsec:spin}.

Next, we state the result on the log likelihood ratio. We will assume the existence of the densities of the disorder and the following regularity conditions for them. (If the distribution of the disorder has an atom, the detection problem becomes uninteresting; see, e.g., Appendix G in \cite{Perry2018}.)

\begin{assump} \label{assump:decay}
Suppose that $\sqrt{N} W_{ij}$ $(1 \leq i<j \leq N)$ are i.i.d. with the density function $p$, and have variance $1$. Similarly, suppose that the normalized diagonal entries $\sqrt{N} W_{kk}$ $(1 \leq k \leq N)$ are i.i.d. with the density function $p_d$, and have variance $w_2$ for a constant $w_2 \geq 0$. We assume the following:
	\begin{itemize}
	\item The density functions $p$ and $p_d$ are smooth, positive everywhere, and symmetric (about $0$).
	\item The functions $p$, $p_d$, and all their derivatives vanish at infinity.
	\item The functions $p^{(s)}/p$ and $p_d^{(s)}/p_d$ are polynomially bounded, i.e., for any positive integer $s$ there exist constants $C_s, r_s >0$, independent of $N$, such that $|p^{(s)}(x)/p(x)|, |p_d^{(s)}(x)/p_d(x)| \leq C_s (1+|x|)^{r_s}$ uniformly on $x$. (Here, $p^{(s)}$ and $p_d^{(s)}$ are the $s$-th derivatives of $p$ and $p_d$, respectively.)
	\end{itemize}
\end{assump}

\begin{thm} \label{thm:log_LR_1}
Suppose that Assumption \ref{assump:decay} holds for $W$. Suppose that $\sqrt{N} x_k$  $(1 \leq k \leq N)$ are i.i.d., symmetric, and strictly sub-Gaussian random variables with variance $1$, independent of $W$. Define the likelihood ratios by
\beq
	 \caL(\SNR) := \E_\caX \left[ \prod_{i<j} \frac{p(\sqrt{N} W_{ij} - \sqrt{\SNR N} x_i x_j /\| \bsx \|^2)}{p(\sqrt{N} W_{ij})} \prod_k \frac{p_d(\sqrt{N} W_{kk} - \sqrt{\SNR N} x_k^2  /\| \bsx \|^2)}{p_d(\sqrt{N} W_{kk})} \right]
\eeq
and
\beq
	\ol \caL (\SNR) := \E_\caX \left[ \prod_{i<j} \frac{p(\sqrt{N} W_{ij} - \sqrt{\SNR N} x_i x_j)}{p(\sqrt{N} W_{ij})} \prod_k \frac{p_d(\sqrt{N} W_{kk} - \sqrt{\SNR N} x_k^2)}{p_d(\sqrt{N} W_{kk})} \right].
\eeq
Define
\[
	F_p:= \int_{-\infty}^{\infty} \frac{(p'(x))^2}{p(x)} \dd x, \quad F_d:= \int_{-\infty}^{\infty} \frac{(p_d'(x))^2}{p_d(x)} \dd x, \quad G_p:= \int_{-\infty}^{\infty} \frac{(p''(x))^2}{p(x)} \dd x.
\]
Then, for any $\SNR \in (0, 1/F_p)$,
\[
	\log \caL(\SNR) \Rightarrow \caN(-\rho_L, 2\rho_L), \quad \log \ol \caL(\SNR) \Rightarrow \caN(-\rho_L, 2\rho_L),
\]
as $N \to \infty$, i.e., $\log \caL(\SNR)$ and $\log \ol\caL(\SNR)$ converge in distribution to the Gaussian with mean $-\rho_L$ and the variance $2\rho_L$ as $N \to \infty$, where
\[
	\rho_L:= -\frac{1}{4} \left( \log (1-\SNR F_p) + \SNR(F_p- 2F_d) + \frac{\SNR^2}{4} (2F_p^2 - G_p) \right).
\]
\end{thm}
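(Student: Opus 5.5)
We reduce $\log\ol\caL(\SNR)$ to the free energy of a generalized SK model with a modified, non-centered disorder, and then run the multigraph expansion used for Theorem \ref{thm:free_energy_1}. The normalized version $\caL(\SNR)$ is handled at the end by comparison.

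\emph{Step 1: Taylor expansion of the likelihood ratio.} Since $\sqrt{N}x_i$ is strictly sub-Gaussian, $|\sqrt{\SNR N}x_ix_j| = O(N^{-1/2+\epsilon})$ with overwhelming probability. Writing $w=\sqrt N W_{ij}$ and $h=\sqrt{\SNR N}x_ix_j$, we expand
\[
\log\frac{p(w-h)}{p(w)} = -h\frac{p'(w)}{p(w)} + \frac{h^2}{2}\Big(\frac{p''(w)}{p(w)}-\frac{p'(w)^2}{p(w)^2}\Big) + O(h^3(1+|w|)^{r}),
\]
using the polynomial bounds of Assumption \ref{assump:decay}. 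Summing over $i<j$, the third-order terms contribute $O(N^{-1/2+\epsilon})$. The random variables $A_{ij}:=-p'(w)/p(w)$ are i.i.d. and symmetric, with mean $0$ and variance $F_p$. The second-order coefficient $B_{ij}$ has mean $-F_p/2$, because $\E[p''/p]=0$. We split $B_{ij}=-F_p/2+(B_{ij}-\E B_{ij})$. The centered part, summed against $h^2\sim N^{-1}x_i^2x_j^2 N$, only gives a Gaussian shift that is asymptotically independent of $\bsx$ at leading order: after averaging, $\sum_{i<j}(B_{ij}-\E B)x_i^2x_j^2 \SNR N$ concentrates on a field independent of the spins up to $o(1)$. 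Its covariance with $\sum A_{ij}$-type statistics is what produces the constant $G_p$. Indeed, $\E[A^2 B]$ and $\operatorname{Var}(B)$ involve $\int (p'')^2/p$ through integration by parts. The diagonal terms are treated the same way and produce $F_d$: there are $N$ terms with $h=O(N^{-1/2})$, contributing the deterministic mean $-\SNR F_d\cdot(\text{const})$ together with a Gaussian part independent of the off-diagonal field.

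\emph{Step 2: Reduction to a generalized SK free energy.} After Step 1,
\[
\log\ol\caL=\log\E_\caX\exp\Big(\sqrt{\SNR}\sum_{i<j}\tilde J_{ij}\sqrt N x_ix_j-\frac{\SNR F_p N}{4}\|\bsx\|^4\Big)+(\text{spin-independent Gaussian terms})+o(1),
\]
where $\tilde J_{ij}=\sqrt{F_p}^{-1}A_{ij}$ is normalized to variance one and $\beta^2=\SNR F_p<1$. For the unnormalized prior, the factor $\|\bsx\|^4$ fluctuates at order $N^{-1/2}$, and this must be kept. We then follow the proof of Theorem \ref{thm:free_energy_1}. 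We expand $\exp(\beta\sqrt N\tilde J_{ij}x_ix_j)=1+\beta\sqrt N\tilde J_{ij}x_ix_j+\dots$ into a sum over multigraphs, take $\E_\caX$ using the independence of the spins, and show that only cycles (plus double edges carrying the fourth cumulant) survive. The cycle sums $\sum_k \frac{\beta^k}{2k}\big(\mathrm{tr}\,\tilde J^k\text{-type}\big)$ are jointly Gaussian, which yields $\frac14\log(1-\SNR F_p)$ in the mean and $-\frac12\log(1-\SNR F_p)$ in the variance, with corrections at $k=1,2$. The non-Gaussian disorder moment $\E A^4$ enters only through $k=2$ terms. It combines with the $B$ fluctuations from Step 1 into $\SNR^2(2F_p^2-G_p)/4$, by the identities $\E[A^2]=F_p$ and $\E[A^2B]$, $\E[B^2]$ expressed via $F_p,G_p$. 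The identity mean $=-\tfrac12$ variance, i.e.\ $\caN(-\rho_L,2\rho_L)$, should then be checked directly. It also follows a priori from $\E\ol\caL=1$ together with uniform integrability (contiguity / Le Cam's first lemma), which gives a useful consistency check on the constants.

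\emph{Step 3: Normalized prior and main obstacle.} For $\caL(\SNR)$, replace $x_ix_j$ by $x_ix_j/\|\bsx\|^2$. The $\|\bsx\|^4$ term becomes exactly $1$, and the effect of $\|\bsx\|^2-1=O(N^{-1/2})$ on the cycle terms must be shown to vanish in distribution. This is consistent with the fact that $m_4$ does not appear in $\rho_L$, and parallels the analysis of the strictly normalized spins in Theorem \ref{thm:free_energy_1}. The main obstacle is Step 2 combined with Step 1: we must control the multigraph expansion uniformly when the disorder $A_{ij}$ is only polynomially bounded, rather than sub-Gaussian. This requires truncating at $N^{\epsilon}$ and using high-moment bounds for the non-cycle graphs. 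We must also track the correlated second-order field $B_{ij}$ precisely enough to obtain $G_p$ exactly. I would first attempt the computation with $B$ replaced by its mean, verify that this reproduces the Gaussian-prior answer, and then add the $B$ fluctuation as a perturbation that is linear in the cycle statistics.
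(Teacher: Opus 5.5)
Your Step 1 has a genuine gap, and it changes the constants. You claim the cubic remainder sums to $O(N^{-1/2+\epsilon})$, but this is off by a factor of $N$. Since $h_{ij}=\sqrt{\SNR N}x_ix_j\asymp N^{-1/2}$ and there are $\asymp N^2$ pairs, $\sum_{i<j}|h_{ij}|^3\asymp N^{1/2}$ and $\sum_{i<j}h_{ij}^4\asymp 1$. The remainder only becomes negligible at fifth order, which is why the paper keeps terms up to $x_i^4x_j^4$.

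The discarded cubic and quartic log-terms contribute at order one. Write $r_n=p^{(n)}/p$ and $I_4:=\int (p')^4/p^3=\E[A^4]$, and use the integration by parts $\E[r_1^2r_2]=\tfrac23 I_4$. Expanding in $h$ then gives
\[
\E_\caW\big[e^{A_{ij}h+B_{ij}h^2}\big]=1+\Big(\frac{G_p}{8}-\frac{I_4}{12}\Big)h^4+O(h^6).
\]
So your truncated likelihood ratio has $\E_\caW\approx\exp\big((\tfrac{G_p}{8}-\tfrac{I_4}{12})\SNR^2m_4^2/2\big)$ instead of $1=\E_\caW\ol\caL$. The compensation you dropped is exactly $(\E[AC]+\E[D])\sum_{i<j}h_{ij}^4$. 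Here $C=-\tfrac{r_3}{6}+\tfrac{r_1r_2}{2}-\tfrac{r_1^3}{3}$ and $D$ are the cubic and quartic log-coefficients; $C$ has mean zero but correlates with $A$. This constant vanishes for Gaussian noise, where the log-ratio is exactly quadratic, but not in general (e.g.\ logistic noise). Your Steps 1--2 would therefore produce constants involving $I_4$ and $m_4$, neither of which appears in $\rho_L$.

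The same issue affects your claimed identities. $\E[A^2B]=-I_4/6$ and $\E[B^2]=(G_p-I_4/3)/4$ are not functions of $F_p,G_p$; only the combination $\E[(B+A^2/2)^2]=G_p/4$ is. Also, the centered field $\SNR\sum_{i<j}(B_{ij}-\E B)Nx_i^2x_j^2$ is not spin-independent unless the prior is Rademacher. Writing $Nx_i^2=1+z_i$, its part linear in the $z_i$ has order-one fluctuations under $\caX$ and feeds an $m_4$-dependent term into $\E_\caX\exp(\cdot)$.

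The paper avoids all of this by never taking logarithms edge by edge. It Taylor-expands the ratio itself to fourth order:
\[
p(w-h)/p(w)=1+\sum_{n=1}^4 p^{(n)}_{ij}(\sqrt N x_ix_j)^n+O(|h|^5),\qquad p^{(n)}_{ij}=(-1)^n\SNR^{n/2}p^{(n)}(w)/(n!\,p(w)).
\]
Each coefficient is exactly centered because $\int p^{(n)}=0$, and the coefficients are orthogonal by parity. Moreover $p^{(2)}_{ij}=\SNR(B_{ij}+A_{ij}^2/2)$, so $\E[(p^{(2)}_{ij})^2]=\SNR^2G_p/4$ directly. Keeping your log-terms through fourth order and re-exponentiating edge by edge would simply return this product. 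In this form no $\|\bsx\|^4$ compensation arises (in your i.i.d.\ setting that term carries $O(\sqrt N)$ spin fluctuations), and neither does $I_4$ or $m_4$. Proposition \ref{prop:main} with $\F=\SNR F_p$, $\G=\SNR^2G_p/4$, $\F'=\SNR F_d$ then gives $\caN(-\rho_L,2\rho_L)$.

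There is a second gap in Step 2. Your assertion that after $\E_\caX$ ``only cycles plus double edges survive'' is the hard part for a non-Rademacher prior, and it is false term by term: any multigraph with a node of degree at least $4$ has prior factor $Y(\gamma)>1$. The paper replaces $Y(\gamma)$ by $1$ only in $L^2$, after summation. That step uses orthogonality of the graph variables, cutoff lemmas for large graphs, and Assumption \ref{assump:second_moment}, i.e.\ $\E\exp(N\SNR_c\langle\bsx,\bsx'\rangle^2/2)=O(1)$. This is where strict sub-Gaussianity of the prior and $\SNR F_p<1$ actually enter. Your plan has no substitute for this second-moment input, and without it the large-graph contributions are uncontrolled.
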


See Section \ref{subsec:signal} for more discussion on the definition of the likelihood ratios in Theorem \ref{thm:log_LR_1}.

Theorem \ref{thm:log_LR_1} proves the universality of the log likelihood ratio in the subcritical regime ($\SNR <1/F_p$) except that the mean and the variance depend on $F_p$, $F_d$, and $G_p$. In particular, we find that the limiting distribution of the log likelihood ratio with the spherical prior is the Gaussian in the subcritical regime with mean $-\rho_L$ and the variance $2\rho_L$, by considering $\log \caL(\SNR)$ in the case $\sqrt{N} x_k$'s are standard Gaussian. Like in Theorem \ref{thm:free_energy_1}, the threshold is optimal, since it is known that the strong detection, a consistent test between $\bsH_0$ and $\bsH_1$ with probability $1-o(1)$, is possible in the supercritical regime ($\SNR > 1/F_p$). (We refer to \cite{Perry2018} and references therein for more discussion.)

The parameters $F_p$ and $F_d$ are the Fisher information of the off-diagonal entries and the diagonal entries of the data matrix, respectively, and their appearance in the limiting distribution is natural when considering the optimality of the pre-transformed PCA \cite{lesieur2015mmse,Perry2018}. On the other hand, the parameter $G_p$ can only be seen when we precisely compute the limiting law of the log likelihood ratio. To our best knowledge, it first appeared in \cite{chung2025asymptotic}, and contrary to the conjecture made in \cite{chung2025asymptotic}, its appearance is not due to the Rademacher prior. 

The corresponding results for more general cases will be stated in Section \ref{subsec:signal}.

\subsection{Main contributions}

To explain our main contribution, we need to separate the universality of the fluctuation into two different types. The first is the disorder universality, which asserts that for a specific prior, the fluctuation does not depend on the disorder. The second is the prior universality, which asserts that for a specific disorder, the fluctuation does not depend on the prior. In this viewpoint, the results for the SK model \cite{AizenmanLebowitzRuelle} or the SSK model \cite{Baik-Lee2016} prove the disorder universality and the result in \cite{AlaouiJordan2018} proves the prior universality in the log likelihood ratio with Gaussian disorder. However, the methods in these works are model-specific and rely on certain special properties of the prior - Ising type spin in the cluster expansion in \cite{AizenmanLebowitzRuelle} or the spherical symmetry in the integral representation in \cite{Baik-Lee2016} - or of the disorder - Gaussianity in the cavity method \cite{AlaouiJordan2018}.

To prove both the disorder universality and the prior universality, we introduce a novel method based on the graph expansion, which provides a general, unified approach to analyze both the free energy and the log likelihood ratio. In this method, the analysis based on the special properties of the Gaussianity of the disorder or explicit formulas from the Rademacher prior or the spherical prior are entirely removed. Instead, the object is first Taylor-expanded and written as the partial expectation of a polynomial of the spin (or spike) variables. (See Proposition \ref{prop:main}.) Then, we convert each term in the polynomial into a (multi-)graph with a certain structure where the disorder corresponds to the edge and the prior to the node. (See Section \ref{sec:multigraph}.) By estimating the $L^2$-norm difference between the graphs from the given prior and the matching graphs from the Rademacher prior, we can prove the prior universality. Finally, we can prove the disorder universality with the Rademacher prior, adapting the idea of the cluster expansion.

The main difficulty in the analysis based on the graph expansion with a general prior is that the control of the large graph, whose size increases with $N$, is almost impossible if the (fourth and higher) moments of the prior distribution is extremely large. In fact, this is not just a technical issue but a fundamental one due to the nature of the generalized models. Even with the Gaussian disorder, for certain priors with sufficiently large moments, the prior universality for the log likelihood ratio holds only when $\SNR < \lambda_c$ for some $\lambda_c < 1$, known as the reconstruction threshold \cite{AlaouiJordan2018}. This suggests that we need to assume a certain condition that jointly depends on the prior and the inverse temperature $\beta$ (or the SNR $\SNR$), which preferably does not depend on the disorder. In this paper, we use Assumption \ref{assump:second_moment} for this purpose. (See Section \ref{subsec:threshold} for more detail.)

We expect our method can be applied to a very general class of models. While we assume several assumptions on the disorder and the prior in this paper, most of them are technical ones and we believe that they can be relaxed further by more refined analysis of the graphs. However, we will mainly focus on the introduction of our method and the proof of the main results based on it; the analysis under weaker assumptions will be discussed in future works.

\subsection{Organization of the paper}
The rest of the paper is organized as follows. In Section \ref{sec:main}, we state the general results, Theorem \ref{thm:free_energy} and Theorem \ref{thm:log_LR}. In Section \ref{sec:thm_proof}, we prove the results by first justifying the Taylor expansion of the partition function and the likelihood ratio, and then applying Proposition \ref{prop:main} that asserts the convergence of the partial expectation of the polynomial of the spin (or spike) variables. In Sections \ref{sec:multigraph} and \ref{sec:main_proof}, we introduce the graph expansion method and prove Proposition \ref{prop:main}. Several technical results are proved in Section \ref{sec:proofs}.

\section{General results} \label{sec:main}

In this section, we state our main results, Theorem \ref{thm:free_energy} and Theorem \ref{thm:log_LR}, which generalize Theorem \ref{thm:free_energy_1} and Theorem \ref{thm:log_LR_1}, respectively. We assume that the disorder in both the free energy and the log likelihood ratio is given as a Wigner matrix for which we use the following definition.

\begin{defn}[Wigner matrix] \label{defn:Wigner}
A Wigner matrix is an $N \times N$ real symmetric matrix $W$ whose upper triangle entries $W_{ij}$ are i.i.d. real random variables satisfying the following conditions:
\begin{itemize}
\item The entries are centered, i.e., $\E[W_{ij}] = 0$ for all $i, j$.
\item The variances of the entries are normalized so that $\E[W_{ij}^2] = N^{-1}$ for $i \neq j$ and $\E[W_{kk}^2] = w_2 N^{-1}$ for a constant $w_2 \geq 0$.
\item For any integer $s>2$, there exists a constant $C^W_s$ such that $\E[|\sqrt{N} W_{ij}|^s] \leq C^W_s$ for all $i, j$.
\end{itemize}
\end{defn}

Similarly, we use the following definition for the prior in both the free energy and the log likelihood ratio.

\begin{defn}[Prior] \label{defn:prior}
For a random variable $\bsx = (x_1, \dots, x_N) \in \R^N$ whose distribution is given by a prior $\caX$, we assume that for $i=1, 2, \dots, N$, 
\begin{itemize}
\item The entries $x_i$'s are i.i.d. and the distribution of $\sqrt{N} x_i$ does not depend on $N$.
\item The entries $x_i$'s are symmetric (about $0$) and their variances are normalized so that $\E[x_i^2] = N^{-1}$.
\item for any integer $s>2$, there exists a constant $C^x_s$ such that $\E[|\sqrt{N} x_i|^s] \leq C^x_s$.
\end{itemize}
\end{defn}

In the statement and the proof of our main results, we use the following assumption.

\begin{assump} \label{assump:second_moment}
For some constant $\SNR_c > 0$, there exists a sequence of events $\Omega_N$ in $\caX$ with $\p(\Omega_N^c) = o(1)$ such that
\beq \label{eq:second_moment_formula}
	\E_{\bsx, \bsx' \sim \caX} \left[ \indi(\Omega_N (\bsx)) \indi(\Omega_N (\bsx')) \exp \left( \frac{N \SNR_c \langle \bsx, \bsx' \rangle^2}{2} \right) \right] = O(1).
\eeq
\end{assump}

The notation $\bsx, \bsx' \sim \caX$ means that the spin vectors $\bsx$ and $\bsx'$ are independently drawn from the same prior distribution $\caX$. An important example of priors that satisfying Assumption \ref{assump:second_moment} is a sub-Gaussian random variable for which we use the following definition:

\begin{defn} \label{defn:sub_Gaussian}
A real valued random variable $z$ is sub-Gaussian with the variance proxy $\sigma^2$ if $\E[z] = 0$ and $\E[e^{\alpha z}] \leq e^{\sigma^2 \alpha^2 /2}$ for any (possibly $N$-dependent) constant $\alpha$.
\end{defn}
For other equivalent definitions and properties of the sub-Gaussian random variables, we refer to Section 2.5 in \cite{vershynin2020high}. It is known that if the prior is sub-Gaussian with variance proxy $\sigma^2$, then Assumption \ref{assump:second_moment} holds with any $\SNR_c < 1/\sigma^2$. Since a strictly sub-Gaussian random variable is sub-Gaussian with the variance proxy $1$, Assumption \ref{assump:second_moment} holds with any $\SNR_c < 1$ if the prior is strictly sub-Gaussian.

In Section \ref{subsec:threshold} we will discuss Assumption \ref{assump:second_moment} more in detail.

\subsection{Free energy of spin glass} \label{subsec:spin}

Recall the definition of the partition function and the free energy of a generalized SK model.

\begin{defn}[Partition function and free energy] \label{defn:partition_function}
For an $N \times N$ Wigner matrix $W = (W_{ij})$ and a prior $\caX$, we define the partition function $Z_N(\beta)$ and the free energy $F_N(\beta)$ at inverse temperature $\beta > 0$ by
\[
	Z_N \equiv Z_N(\beta) = \E_{\caX} \left[ \exp \left( \beta N \sum_{i<j} W_{ij} \frac{x_i x_j}{\| \bsx \|^2} \right) \right], \qquad F_N \equiv F_N(\beta) = \frac{1}{N} \log Z_N.
\]
Here, when $\| \bsx \| = 0$, we define $x_i x_j / \| \bsx \|^2 = 0$ for all $i, j$.
\end{defn}

Our result on the asymptotic normality of the free energy, which generalizes Theorem \ref{thm:free_energy_1}, is as follows:

\begin{thm} \label{thm:free_energy}
Recall the definition of $F_N(\beta)$ in Definition \ref{defn:sub_Gaussian}. Suppose that $W$ is a Wigner matrix, $\sqrt{N} W_{ij}$ $(i<j)$ is symmetric, and one of the following is true:
\begin{itemize}
\item the (rescaled) entry $\sqrt{N} W_{ij}$ is strictly sub-Gaussian, or
\item the (rescaled) prior $\sqrt{N} x_i$ is bounded, and its distribution is absolutely continuous in a neighborhood of $0$.
\end{itemize}
Suppose that Assumption \ref{assump:second_moment} holds with $\SNR_c$. Then, the statement in Theorem \ref{thm:free_energy_1} holds without any change for any $\beta \in (0, \sqrt{\SNR_c})$. In particular, it holds for any $\beta \in (0, 1/\sigma)$ if the prior is sub-Gaussian with variance proxy $\sigma^2$.
\end{thm}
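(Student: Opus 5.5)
The plan is to Taylor-expand $Z_N$ in the disorder, turn the result into a polynomial in the normalized spins, and then obtain its fluctuations by the multigraph expansion. Write $\bsy = \bsx/\|\bsx\|$ and, for each pair $i<j$, factor
\[
e^{\beta N W_{ij} y_i y_j} = \E_W\bigl[e^{\beta N W_{ij} y_i y_j}\bigr]\,\bigl(1 + \beta N W_{ij} y_iy_j + R_{ij}\bigr),
\]
where $R_{ij}$ collects the higher-order Taylor terms. Since $N y_i y_j = O(N^{-1/2+\epsilon})$ on a typical event, $\sqrt N W_{ij}$ enters only through the small parameter $\beta \sqrt N y_i y_j$. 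Expanding the cumulant generating function then gives
\[
\prod_{i<j}\E_W\bigl[e^{\beta N W_{ij}y_iy_j}\bigr] = \exp\Bigl(\tfrac{\beta^2 N}{2}\sum_{i<j}y_i^2y_j^2 + \tfrac{\beta^4 (w_4-3)N^2}{24}\sum_{i<j}y_i^4y_j^4 + o(1)\Bigr).
\]
The first term equals $\frac{\beta^2 N}{4}\bigl(1-\sum_i y_i^4\bigr)$ and produces $F(\beta)=\beta^2/4$. Its correction $-\frac{\beta^2}{4}N\sum_i y_i^4 \to -\frac{\beta^2 m_4}{4}$ is almost deterministic by the law of large numbers, and the second term tends to $\frac{\beta^4(w_4-3)m_4^2}{48}$. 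These are the $m_4$-dependent pieces of $m_F$.

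Under the two alternative hypotheses I would justify the cutoff as follows. If the disorder is strictly sub-Gaussian, $\E_W e^{tW}\le e^{t^2/2N}$ lets me bound $Z_N$ above and control the contribution of atypical $\bsx$ (where $\|\bsx\|$ is far from $1$) directly. If instead the prior is bounded with density near $0$, then $\sqrt N|y_i|$ is bounded on the typical event $\{\|\bsx\|^2\approx 1\}$, and the absolute-continuity condition handles the small-$\|\bsx\|$ region, whose probability is exponentially small in $N$. In both cases I restrict to $\Omega_N$ from Assumption \ref{assump:second_moment}. A second-moment computation, comparing $\E_W$ of the square of the restricted partition function with \eqref{eq:second_moment_formula} at $\beta^2<\SNR_c$, shows that the restriction changes $Z_N/\E_W Z_N$ only by $1+o_P(1)$.

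What remains is $\E_{\caX}$ of a product $\prod_{i<j}(1+\beta N W_{ij}y_iy_j + R_{ij})$ weighted by the deterministic-like factor above. I would express this as $\sum_{\Gamma}\beta^{|E(\Gamma)|}\prod_{e\in\Gamma} \sqrt N W_e \cdot \E_\caX[\cdots]$ over multigraphs $\Gamma$ and apply Proposition \ref{prop:main}, invoked as stated later in the paper, for the convergence of such partial expectations. Only simple cycles survive in $L^2$. A cycle of length $k$ has spin expectation asymptotically $N^{-k/2}$ by symmetry and $\E y_i^2\approx 1/N$, so the log of the sum behaves like $\sum_{k\ge3} \frac{\beta^k}{2k} C_k$, where the $C_k=N^{-k/2}\sum_{\text{cycles}}\prod \sqrt N W_e$ are asymptotically independent Gaussians with variance $2k$ and mean $0$ (cycles of length $\ge 3$). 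Comparison with the Rademacher prior via the $L^2$ graph comparison gives prior universality. The disorder-cumulant correction enters through the $k=4$ term: it shifts the variance by $\beta^4(w_4-3)/8$, matching the $-\beta^4(w_4-3)/4$ inside $V_F$. Collecting terms then gives $\sum_{k\ge3}\beta^{2k}/(4k) = -\frac14\log(1-\beta^2)-\beta^2/4-\beta^4/8$ for the variance-type contribution, and the mean $\frac14\log(1-\beta^2)+\dots$ follows from $\log$ of $\E\exp$ of the Gaussian sum together with the Wick-type $-\frac12\mathrm{Var}$ correction. This reproduces $m_F$ and $V_F$, and consistency with $\E Z_N$ fixes the constants.

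I expect the main obstacle to be the uniform control of large graphs, whose number of edges grows with $N$, under only Assumption \ref{assump:second_moment}. I would bound the total $L^2$ mass of non-cycle graphs by $\E_{\bsx,\bsx'}\exp(\beta^2 N\langle \bsy,\bsy'\rangle^2/2)$ restricted to $\Omega_N$, which is $O(1)$ precisely because $\beta^2<\SNR_c$. I would then use dominated convergence in the graph size to conclude that only cycles survive.
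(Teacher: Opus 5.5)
Your Case I argument is essentially the paper's. Strict sub-Gaussianity gives $\E_\caW$ of the normalized integrand $\le 1$ for every $\bsx$, so the first-moment Lemma \ref{lem:truncate} discards any $o(1)$-probability set of $\bsx$; no second-moment comparison is needed. Expand $R_{ij}$ to fourth order in $t=\beta\sqrt N y_iy_j$, not into raw monomials $\prod_e \sqrt N W_e$, which are not the centered, orthogonal coefficients Proposition \ref{prop:main} requires. Then dividing by $\E_\caW e^{\beta N W_{ij}y_iy_j}$ produces exactly the paper's $P^{(1)},\dots,P^{(4)}$, including the constant $6-w_4$ in $P^{(4)}$. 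The shifts $-\beta^2 m_4/4$ and $\beta^4(w_4-3)m_4^2/48$ are also correct.

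The genuine gap is Case II. There the disorder is only a Wigner matrix with bounded polynomial moments, so $\E_\caW e^{\beta N W_{ij}y_iy_j}$ can be infinite (e.g.\ for a density $\propto e^{-|w|^{1/2}}$). Your cumulant expansion, the ratio $Z_N/\E_\caW Z_N$, and your second-moment comparison are then undefined. You also give no a priori bound on the integrand for atypical $\bsx$, and without one the smallness of $\p(\|\bsx\|\ \text{small})$ proves nothing. The paper supplies this in four steps:
\begin{enumerate}
\item It keeps the deterministic normalization $e^{-N\beta^2/4}$ and uses $\|W\|\le 2+\delta$ with overwhelming probability. The exponent is then $O(N)$ uniformly over unit vectors, so $\E_\caW[\indi(\wt\Omega_\caW)\wt Z]\le e^{C_1 N}$.
\item It uses absolute continuity near $0$ to get $\p(\|\bsx\|<N^{-1/40})=o(C^{-N})$ for \emph{every} $C$. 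Super-exponential decay is what beats $e^{C_1N}$.
\item On $\{\|\bsx\|\ge N^{-1/40}\}\cap\{\max_{i<j}|W_{ij}|\le N^{-29/60}\}$, boundedness of the prior makes the fourth-order Taylor expansion valid. It also bounds $\E_\caW$ of the expanded product by $C^{N^{1/5}}$. Hoeffding's bound $c_0^{-N^{3/5}}$ beats this, which allows restricting to $\|\bsx\|^2\approx 1$ and $N\sum_i x_i^4\approx m_4$.
\item The fourth-order coefficient is then centered by hand using only $w_4$.
\end{enumerate}

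Your identification of the limit is also mis-bookkept. Proposition \ref{prop:main} applies with $\tau_1=\beta^2$, $\tau_2=\E[(P^{(2)}_{ij})^2]=\beta^4(w_4-1)/4$ and $\tau_1'=0$. This gives $V_F=2\rho$ and $m_F=-\rho-\beta^2m_4/4+\beta^4(w_4-3)m_4^2/48$. The $w_4$-dependence enters through the double edges $P^{(2)}_{ij}=\frac{\beta^2}{2}((\sqrt N W_{ij})^2-1)$, not through $4$-cycles: a $4$-cycle of single edges has $\E[Z(\gamma)^2]=\beta^8$, which does not involve $w_4$. So it is false that only simple cycles survive. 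The double edges contribute $\tau_2/2=\beta^4(w_4-1)/8$ to the variance. That is your shift $\beta^4(w_4-3)/8$ \emph{plus} the Gaussian $k=2$ term $\beta^4/4$, which your sum over $k\ge 3$ omits. With your accounting as written, $V_F$ would come out too small by $\beta^4/4$. Separately, your $C_k$ has variance $1/(2k)$, not $2k$. Your discussion of large graphs is redundant once you invoke Proposition \ref{prop:main}.
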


We prove Theorem \ref{thm:free_energy} in Section \ref{subsec:free_energy}.

If we use the i.i.d. spins instead of the normalized spins so that the partition function is defined by $Z_N(\beta) = \E_{\caX}[\exp( \beta N \sum W_{ij} x_i x_j)]$, then heuristically the limiting free energy is replaced by $\beta^2 \| \bsx \|^4 /4$, which would generate a sub-leading order term due to the order $N^{-1/2}$ fluctuation of $\| \bsx \|$, which may dominate the fluctuation of the free energy described in Theorem \ref{thm:free_energy}. To avoid such complication, we only consider the normalized spins in this paper.

The assumptions in Theorem \ref{thm:free_energy} about the disorder or the prior are used in our proof to justify the first step of our method, the Taylor expansion of the exponential function in the definition of the partition function. In principle, these can be replaced by any other assumptions that guarantee the validity of the Taylor expansion.

\begin{rem} \label{rem:comparison}
For Rademacher prior, $m_4=1$ and hence
\[
	m_F = \frac{1}{4} \log (1- \beta^2) - \frac{\beta^4 \kappa_4}{24}, \quad V_F = -\frac{1}{2} \left( \log (1- \beta^2) + \beta^2 - \frac{\beta^4 \kappa_4}{4} \right),
\]
which recovers the result in \cite{AizenmanLebowitzRuelle}. Similarly, for the spherical prior, which is equal to the normalized prior with Gaussian entries, we let $m_4 = 3$ to find that
\[
	m_F = \frac{1}{4} \log (1- \beta^2) - \frac{\beta^2}{2} + \frac{\beta^4 \kappa_4}{8}, \quad V_F = -\frac{1}{2} \left( \log (1- \beta^2) + \beta^2 - \frac{\beta^4 \kappa_4}{4} \right).
\]
It recovers the result in \cite{Baik-Lee2016} when $\sqrt{N} W_{ij}$ is strictly sub-Gaussian. (Note that the inverse temperature $\beta$ in \cite{Baik-Lee2016} is effectively the half of that in the current paper, since the Hamiltonian in \cite{Baik-Lee2016} is defined using the sum $\sum_{i \neq j}$ instead of $\sum_{i < j}$.)
\end{rem}

\subsection{Log likelihood ratio} \label{subsec:signal}

Suppose that for a given spiked Wigner matrix of the form \eqref{eq:rank-1}, the true spike $\bsx$ is not known a priori, but only its prior distribution $\caX$ is known. To prove the fluctuation of the log likelihood ratio, it suffices to prove it under one hypothesis as the other one can be readily obtained by applying Le Cam's third lemma. (See, e.g., Theorem 6.6 and Example 6.7 in \cite{van2000asymptotic}.) We assume $\bsH_0$, i.e., $M = W$ in \eqref{eq:rank-1}, since the likelihood ratio under $\bsH_0$ naturally corresponds to the free energy of the generalized SK model in our analysis. The likelihood ratio for normalized priors is defined as follows:

\begin{defn}[Likelihood ratio for normalized prior] \label{defn:LR_normal}
For an $N \times N$ Wigner matrix $W = (W_{ij})$ and a prior $\caX$, we define the likelihood ratio given $\bsH_0$, $\caL(\SNR)$, by
\beq
	 \caL(\SNR) = \E_\caX \left[ \prod_{i<j} \frac{p(\sqrt{N} W_{ij} - \sqrt{\SNR N} x_i x_j /\| \bsx \|^2)}{p(\sqrt{N} W_{ij})} \prod_k \frac{p_d(\sqrt{N} W_{kk} - \sqrt{\SNR N} x_k^2  /\| \bsx \|^2)}{p_d(\sqrt{N} W_{kk})} \right].
\eeq
Here, when $\| \bsx \| = 0$, we define $x_i x_j / \| \bsx \|^2 = 0$ for all $i, j$.
\end{defn}

As we will see in Theorem \ref{thm:log_LR}, for the likelihood ratio, there are no terms corresponding to the limiting free energy but the leading order term is asymptotically normally distributed. In particular, unlike the free energy of the spin glass model, the difference between the i.i.d. prior and the normalized prior is negligible. We thus consider the likelihood ratio for i.i.d. priors as follows:

\begin{defn}[Likelihood ratio for i.i.d. prior] \label{defn:LR_iid}
For an $N \times N$ Wigner matrix $W = (W_{ij})$ and a prior $\caX$, we define the likelihood ratio $\ol \caL (\SNR)$ by
\beq \label{eq:LR_iid}
	\ol \caL (\SNR) = \E_\caX \left[ \prod_{i<j} \frac{p(\sqrt{N} W_{ij} - \sqrt{\SNR N} x_i x_j)}{p(\sqrt{N} W_{ij})} \prod_k \frac{p_d(\sqrt{N} W_{kk} - \sqrt{\SNR N} x_k^2)}{p_d(\sqrt{N} W_{kk})} \right].
\eeq
\end{defn}

Our result on the asymptotic normality of the log likelihood ratio, which generalizes Theorem \ref{thm:log_LR_1}, is as follows:

\begin{thm} \label{thm:log_LR}
Recall the definition of the likelihood ratio $\caL(\SNR)$ in Definition \ref{defn:LR_normal} and $\ol \caL(\SNR)$ in Definition \ref{defn:LR_iid}. Suppose that $W$ satisfies Assumption \ref{assump:decay}. Suppose that Assumption \ref{assump:second_moment} holds with $\SNR_c$. Then, the statement in Theorem \ref{thm:log_LR_1} holds without any change for any $\SNR \in (0, \SNR_c/F_p)$. In particular, it holds for any $\SNR \in (0, 1/(F_p \sigma^2))$ if the prior is sub-Gaussian with variance proxy $\sigma^2$.
\end{thm}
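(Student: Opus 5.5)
We reduce $\ol\caL(\SNR)$ (and then $\caL(\SNR)$) to a polynomial in the spike variables with coefficients built from the noise, and then apply Proposition \ref{prop:main}, as in the proof of Theorem \ref{thm:free_energy}.

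\emph{Step 1: Taylor expansion of the off-diagonal ratios.} For each $i<j$ set $w=\sqrt N W_{ij}$ and $t=\sqrt{\SNR N}x_ix_j$, which is typically of order $N^{-1/2}$. Expand
\[
\frac{p(w-t)}{p(w)}=\sum_{s\ge0}\frac{(-t)^s}{s!}\frac{p^{(s)}(w)}{p(w)} .
\]
Assumption \ref{assump:decay} gives three facts. The functions $h_s:=p^{(s)}/p$ satisfy $\E h_s(w)=\int p^{(s)}=0$ for $s\ge1$. They are polynomially bounded, so every moment is finite. They are odd or even according to the parity of $s$. Taking logarithms, we write $\log\frac{p(w-t)}{p(w)}=-t h_1(w)+\frac{t^2}{2}(h_2-h_1^2)(w)+\dots$ and truncate at order $t^4$. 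The remainder is $O(N^{-5/2}\cdot\mathrm{poly})$ per entry, so its sum over pairs is $o(1)$.

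\emph{Step 2: separate deterministic and random parts.} The fluctuating part of $\frac{t^2}{2}(h_2-h_1^2)$, namely $\frac{t^2}{2}\big[(h_2-h_1^2)+F_p\big]$, summed over pairs, is a linear statistic with vanishing mean. We keep it in the random polynomial because its correlation with the linear term produces the $G_p$-type corrections. The deterministic part contributes $-\frac{\SNR N}{2}\sum_{i<j}x_i^2x_j^2F_p$, and $\E h_1^2=F_p$. We treat the diagonal factors the same way. Their leading term $\sqrt{\SNR N}\sum_k x_k^2 h_{d,1}(\sqrt N W_{kk})$ is approximately $\sqrt{\SNR}\,N^{-1/2}\sum_k h_{d,1}$, independent of $\bsx$. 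This gives an asymptotically Gaussian additive term of variance $\SNR F_d$, and the $t^2$-term gives the mean shift $-\SNR F_d/2$. Together these account for the $F_d$ part of $\rho_L$. After this reduction, $\ol\caL$ becomes the $\caX$-expectation of
\[
\exp\Big(\sum_{i<j}\big(\sqrt{\SNR}\,\tilde Y_{ij}\sqrt N x_ix_j+\SNR\,\tilde Q_{ij} N x_i^2x_j^2+\dots\big)-\tfrac{\SNR F_p}{4}N\|\bsx\|^4\Big).
\]
Here $\tilde Y_{ij}=-h_1(\sqrt NW_{ij})/\sqrt N$ has variance $F_p/N$, and $\tilde Q_{ij}$ has mean zero. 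This is a generalized SK partition function with effective disorder $\tilde Y/\sqrt{F_p}$ and inverse temperature $\sqrt{\SNR F_p}$. The requirement $\SNR F_p<\SNR_c$ is exactly the hypothesis of Assumption \ref{assump:second_moment}, after we restrict $\bsx$ to $\Omega_N$ (off $\Omega_N$ the contribution is negligible in $L^1$ by Markov). Expanding the exponential, justified via the second moment bound \eqref{eq:second_moment_formula} and a truncation of the disorder to large-probability polynomial bounds, we reach the setting of Proposition \ref{prop:main}. That proposition yields joint Gaussian limits of the cycle statistics $\sum \tilde Y_{i_1i_2}\cdots \tilde Y_{i_ki_1}$ and of the $\tilde Q$ terms.

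\emph{Step 3: identify the limit.} The log-normal structure is $\log\ol\caL\to\sum_{k\ge3}(\text{cycle terms}) - \text{compensator}$, plus the $k=1,2$ corrections. Summing the cycle variances $\sum_{k\ge3}(\SNR F_p)^k/(2k)$ gives $-\frac12\log(1-\SNR F_p)$ minus low-order terms. The $k=2$ correction combines the variance of $\tilde Y^2$-type and $\tilde Q$ terms. Computing it uses $\E h_2^2=G_p$ and $\E h_1^4$, together with the cross term $\E[h_1^2h_2]$. After integration by parts and cancellation, this produces $\frac{\SNR^2}{4}(2F_p^2-G_p)$. The mean is then forced to be $-\frac12$ of the variance, since $\E\ol\caL=1$ together with uniform integrability from the second moment bound. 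This fixes $\mathcal N(-\rho_L,2\rho_L)$ without computing the mean separately.

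\emph{Step 4: normalized prior.} Finally, for $\caL$ we replace $x_i$ by $x_i/\|\bsx\|$, which is prior-dependent but still falls under Proposition \ref{prop:main}. We compare the two expansions. The $\|\bsx\|^4$ factor multiplying $\SNR F_p$ now equals $1$ exactly. The fluctuation of $\|\bsx\|^2$ enters only multiplied by centered noise, so it only adds $o(1)$; this is consistent with the remark that the normalized and i.i.d. priors agree.

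The main obstacle is Step 2: proving that the exchange of $\E_\caX$ with the truncated expansion is legitimate all the way up to $\SNR<\SNR_c/F_p$. This requires controlling large multigraphs whose prior moments may grow. The first attempt would be to bound the $L^2(W)$ norm of the error by a second-moment computation over two replicas $\bsx,\bsx'$. That computation reduces to $\E\exp(\frac{N\SNR F_p}{2}\langle\bsx,\bsx'\rangle^2(1+o(1)))$ on $\Omega_N$, which Assumption \ref{assump:second_moment} bounds.
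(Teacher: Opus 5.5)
Your plan (Taylor-expand on good events, land in Proposition~\ref{prop:main}, read off $\rho$) is the paper's. The detour through $\log$ plus a deterministic compensator is unnecessary, introduces false steps, and puts the difficulty in the wrong place.

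\textbf{How the paper does it.} The paper expands the ratio $p(w-t)/p(w)$ itself, to order $t^4$ off the diagonal and $t^2$ on it. This lands directly in \eqref{eq:convergence_iid} with $P^{(n)}_{ij}=(-1)^n\SNR^{n/2}p^{(n)}(\sqrt{N}W_{ij})/(n!\,p(\sqrt{N}W_{ij}))$ and the analogous $P^{(n)}_{d,kk}$. These are centered since $\int p^{(n)}=0$, and parity gives the orthogonality conditions.

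No second-moment bound is needed for this step. Every ratio has $\caW$-mean one, so $\E_\caW\ol\caL=1$, and Lemma~\ref{lem:truncate} (your Markov remark) discards any $o(1)$-probability event in $\caX$. On the events where all $|x_k|$ and $|W_{ij}|$ are at most $N^{-1/2+\eta}$, the Taylor remainder is uniformly $o(1)$.

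Assumption~\ref{assump:second_moment} and the control of large multigraphs are used only inside Proposition~\ref{prop:main}. That proposition outputs the full limit $\caN(-\rho,2\rho)$, mean included, not merely joint limits of cycle statistics. So no uniform-integrability argument is needed, and the identification is just $\tau_1=\SNR F_p$, $\tau_2=\SNR^2G_p/4$, $\tau_1'=\SNR F_d$. Neither $\E h_1^4$, nor $\E[h_1^2h_2]$, nor integration by parts enters. In your bookkeeping, the centered $t^2$-terms from $h_1^2$ and from $h_2-h_1^2$ cancel algebraically to $\frac{t^2}{2}h_2$.

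\textbf{Errors introduced by the detour.}

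First, the diagonal linear term is not approximately $\bsx$-independent when $m_4\neq1$. Write $x_k=v_ky_k/\sqrt{N}$ and $h_{d,1}=p_d'/p_d$. The part $\sqrt{\SNR/N}\sum_k(y_k^2-1)h_{d,1}(\sqrt{N}W_{kk})$ has conditional variance $\approx\SNR F_d(m_4-1)$. Also, the $\caW$-mean of the diagonal $t^2$-term is $\approx-\SNR F_d m_4/2$, not $-\SNR F_d/2$. These two errors cancel only after $\E_\caX$, so you cannot pull the diagonal factor out of the $\caX$-expectation as your display does. Keep it inside and use the $P_{d,kk}$ part of Proposition~\ref{prop:main}; that is where the $-2\tau_1'$ term of $\rho$ comes from. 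Similarly, the compensator is $\frac{\SNR F_p}{2}N\sum_{i<j}x_i^2x_j^2$, which differs from $\frac{\SNR F_p}{4}N\|\bsx\|^4$ by about $\SNR F_p m_4/4$.

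Second, the per-$\bsx$ comparison in Step 4 fails. Normalizing shifts the compensator by order $\sqrt{N}$ (unless $m_4=1$). It also changes the linear term by an amount whose conditional variance is of order one. Hence $\caL$ and $\ol\caL$ agree only after averaging over $\caX$. Drop the comparison and apply \eqref{eq:convergence_normalized} directly, adding $\{|\|\bsx\|-1|\le N^{-1/5}\}$ to the truncation event.
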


We prove Theorem \ref{thm:log_LR} in Section \ref{subsec:log_LR}.

\begin{rem}
Theorem \ref{thm:log_LR} implies that the log likelihood ratio converges in distribution to $\caN(\rho_L, 2\rho_L)$ under $\bsH_1$, which is a consequence of Le Cam's third lemma. It also implies that the sum of the Type-I error and the Type-II error of the likelihood ratio test converges to 
\[
	\mathrm{erfc} (\sqrt{\rho_L}/2) = \frac{2}{\sqrt{\pi}} \int_{\sqrt{\rho_L}/2}^\infty e^{-x^2} \dd x.
\]
(See, e.g., Corollary 2.5 in \cite{chung2025asymptotic}.)
\end{rem}

\subsection{Threshold for the high temperature regime} \label{subsec:threshold}

Recall that in Assumption \ref{assump:second_moment}, we assumed that
\[
	\E_{\bsx, \bsx' \sim \caX} \left[ \indi(\Omega_N (\bsx)) \indi(\Omega_N (\bsx')) \exp \left( \frac{N \SNR_c \langle \bsx, \bsx' \rangle^2}{2} \right) \right]
\]
is bounded. The quantity above corresponds to the second moment of the likelihood ratio for i.i.d. prior on the event $\Omega_N$ with GOE disorder, i.e., 
\[
	\E_\caW \left[ \E_\caX \left[ \indi(\Omega_N) \prod_{i<j} \frac{p(\sqrt{N} W_{ij} - \sqrt{{\SNR_c} N} x_i x_j)}{p(\sqrt{N} W_{ij})} \prod_k \frac{p_d(\sqrt{N} W_{kk} - \sqrt{{\SNR_c} N} x_k^2)}{p_d(\sqrt{N} W_{kk})} \right]^2 \right]
\]
with $p$ and $p_d$ are the densities of the distribution $\caN(0, 1)$ and $\caN(0, 2)$, respectively. (See \eqref{eq:LR_iid} for the definition of $\ol\caL(\SNR)$.) The derivation is straightforward; see, e.g., Proposition 3.4 in \cite{Perry2018}.

The second moment has been extensively investigated in the literature to prove the contiguity, introduced by Le Cam \cite{cam1960locally}, which means that one distribution is asymptotically absolutely continuous with respect to the other and also implies that two distributions cannot be consistently distinguished. (For the precise definition of the contiguity, we refer to Definition 6.3 in \cite{van2000asymptotic}.) Notable examples are the impossibility results for the Gaussian hidden clique problem \cite{montanari2016limitation}, the clustering in the sparse stochastic block model \cite{mossel2015reconstruction,banks2016information}, the optimality of PCA for the spiked Wigner model \cite{Perry2018}, and the computationally efficient test for the same model \cite{moitra2025precise}. 

The boundedness of the second moment as in Assumption \ref{assump:second_moment} was examined in detail in \cite{Perry2018}. An important case is when the prior is sub-Gaussian for which we use Definition \ref{defn:sub_Gaussian}.

From the definition of the sub-Gaussian random variables, Definition \ref{defn:sub_Gaussian}, it can be readily checked that if $z_1, z_2, \dots, z_N$ are i.i.d. sub-Gaussian with the variance proxy $\sigma^2$, then for any $\bsv = (v_1, v_2, \dots, v_N) \in \R^N$, independent of $\bsz = (z_1, z_2, \dots, z_N)$,
\[
	\E[ e^{\langle \bsv, \bsz \rangle}] = \prod_{k=1}^N \E[e^{v_k z_k}] \leq e^{\sigma^2 \| \bsv \|^2 /2}.
\]
Then, Proposition 3.8 in \cite{Perry2018} asserts that Assumption \ref{assump:second_moment} holds if $\SNR_c < 1/\sigma^2$. In some cases, it is possible to improve the threshold by conditioning (see, e.g., \cite{banks2016information,Perry2018}), which in principle means to find a proper event $\Omega_N$ with which Assumption \ref{assump:second_moment} holds with a larger $\SNR_c$.

For the case where the disorder is Gaussian and the prior is bounded i.i.d., the optimal threshold below which Theorem \ref{thm:log_LR} for $\ol \caL (\SNR)$, known as the reconstruction threshold, was proved in \cite{AlaouiJordan2018}; see Section 2 in \cite{AlaouiJordan2018} for the precise formula for the reconstruction threshold $\lambda_c$. From Theorem \ref{thm:log_LR}, it is clear that if Assumption \ref{assump:second_moment} holds for $\SNR_c$ then $\SNR_c \leq \lambda_c$. It is not certain whether the reconstruction threshold is model-specific or universal, and we do not pursue it further in this paper.

\subsection{More general result} \label{subsec:general}

Our proof of Theorems \ref{thm:free_energy} and \ref{thm:log_LR} is based on the following proposition.
\begin{prop} \label{prop:main}
Suppose that the random variables $\{ P^{(n)}_{ij} \}_{i<j}$ ($n=1, 2, 3, 4$) satisfy the following:
\begin{itemize}
\item For any $n$, $P^{(n)}_{ij}$ are i.i.d., and for any integer $s>2$, there exists a constant $C^P_s$ such that $\E[|P^{(n)}_{ij}|^s] \leq C^P_s$.
\item For any $n$, $\E[P^{(n)}_{ij}] = 0$ and $\E[(P^{(n)}_{ij})^2] = \cn_n$ for some ($N$-independent) constants $\cn_n$.
\item $\E[P^{(1)}_{ij} P^{(2)}_{ij}] = \E[P^{(2)}_{ij} P^{(3)}_{ij}] = \E[P^{(3)}_{ij} P^{(4)}_{ij}] = \E[P^{(1)}_{ij} P^{(4)}_{ij}] = 0$ and $\E[P^{(1)}_{ij} (P^{(2)}_{ij})^2] = 0$.
\end{itemize}
Similarly, we suppose that the random variables $\{ P^{(n)}_{d,kk} \}_{1\le k \le N}$ ($n=1, 2$) satisfy the following:
\begin{itemize}
\item For any $n$, $P^{(n)}_{d, kk}$ are i.i.d., and for any integer $s>2$, there exists a constant $C^P_{d, s}$ such that $\E[|P^{(n)}_{d, kk}|^s] \leq C^P_{d, s}$.
\item For any $n$, $\E[P^{(n)}_{d,kk}] = 0$ and $\E[(P^{(n)}_{d,kk})^2] = \cn'_n$ for some ($N$-independent) constants $\cn'_n$.
\item $\E[P^{(1)}_{d,kk} P^{(2)}_{d,kk}] = 0$.
\end{itemize}
In addition, we also assume that for any $n=1, 2, 3, 4$, $m=1, 2$, $i<j$ and $k$, $P^{(n)}_{ij}$ and $P^{(m)}_{d,kk}$ are independent.
Fix $\epsilon \in (0, 1/60)$ and let
\[
	\Omega_\caX^\epsilon := \{ \max_{1 \leq k \leq N} |x_k| \leq N^{-1/2 +\epsilon} \} \cap \{ | \| \bsx \| - 1 | \leq N^{-1/5} \}.
\]
Suppose that Assumption \ref{assump:second_moment} holds with $\SNR_c$. If $\F < \SNR_c$,
\beq \begin{split} \label{eq:convergence_iid}
	&\log \E_\caX \left[ \indi(\Omega_\caX^\epsilon) \prod_{i<j} \left( 1 + P^{(1)}_{ij} \sqrt{N} x_i x_j + P^{(2)}_{ij} N x_i^2 x_j^2 + P^{(3)}_{ij} N^{3/2} x_i^3 x_j^3 + P^{(4)}_{ij} N^2 x_i^4 x_j^4 \right) \right. \\
	&\qquad \qquad \left. \times \prod_{k=1}^N \left( 1 + P^{(1)}_{d,kk} \sqrt{N} x_k^2 + P^{(2)}_{d,kk} N x_k^4 \right) \right] \Rightarrow \caN(-\rho, 2\rho)
\end{split} \eeq
as $N \to \infty$, where
\[
	\rho:= -\frac{1}{4} \left( \log (1-\F) + (\F - 2 \F') + \frac{\F^2}{2} \right) + \frac{\G}{4}.
\]
Similarly, under the same assumption
\beq \begin{split} \label{eq:convergence_normalized}
	&\log \E_\caX \left[ \indi(\Omega_\caX^\epsilon) \prod_{i<j} \left( 1 + P^{(1)}_{ij} \frac{\sqrt{N} x_i x_j}{\| \bsx \|^2} + P^{(2)}_{ij} \frac{N x_i^2 x_j^2}{\| \bsx \|^4} + P^{(3)}_{ij} \frac{N^{3/2} x_i^3 x_j^3}{\| \bsx \|^6} + P^{(4)}_{ij} \frac{N^2 x_i^4 x_j^4}{\| \bsx \|^8} \right) \right. \\
	&\qquad \qquad \left. \times \prod_{k=1}^N \left( 1 + P^{(1)}_{d,kk} \frac{\sqrt{N} x_k^2}{\| \bsx \|^2} + P^{(2)}_{d,kk} \frac{N x_k^4}{\| \bsx \|^4} \right) \right] \Rightarrow \caN(-\rho, 2\rho)
\end{split} \eeq
as $N \to \infty$.
\end{prop}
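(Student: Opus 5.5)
The plan is to expand the product in \eqref{eq:convergence_iid} into a sum over multigraphs and to isolate the cycle contributions, following the cluster-expansion idea of \cite{AizenmanLebowitzRuelle}. Expanding $\prod_{i<j}(1+\cdots)\prod_k(1+\cdots)$ gives a sum over edge sets on $[N]$. Each edge $\{i,j\}$ carries a label $n\in\{1,2,3,4\}$ and the weight $P^{(n)}_{ij}N^{n/2}x_i^n x_j^n$, and each vertex may carry a diagonal factor. Taking $\E_\caX$ vertex by vertex replaces each vertex by a moment of $\sqrt{N}x_k$ of its total degree. Since the prior is symmetric, only graphs of even degree at every vertex survive. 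On $\Omega_\caX^\epsilon$ these moments are truncated, but they agree with the untruncated ones up to negligible errors. We then split the resulting random variable $\caL_N:=\E_\caX[\cdots]$ as
\[
\caL_N=\exp\Bigl(\textstyle\sum_{k\ge3} Y_k + D + c_N\Bigr)(1+o_P(1)).
\]
Here $Y_k$ is the normalized sum over simple $k$-cycles with label-$1$ edges, $Y_k=N^{-k/2}\sum_{\text{$k$-cycles}}\prod P^{(1)}_{e}$. The term $D$ collects the degree-two contributions: $2$-cycles built from $P^{(1)}$ twice, single label-$2$ edges (using $\E[(\sqrt N x)^2]=1$), and diagonal edges. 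The constant $c_N$ collects deterministic self-averaging sums, such as $\sum_{i<j}(P^{(1)}_{ij})^2/N\to\F/2$ and $\sum_k(P^{(1)}_{d,kk})^2/N$.

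The key steps are as follows.

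(i) Show by a moment computation, using the orthogonality hypotheses and independence, that $(Y_3,\dots,Y_K)$ converges jointly to independent Gaussians $\caN(0,\F^k/(2k))$. The hypotheses $\E[P^{(1)}P^{(2)}]=0$ and $\E[P^{(1)}(P^{(2)})^2]=0$ decouple the label-$2$ fluctuations from the cycles.

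(ii) Identify the label-$2$ contribution $N^{-1}\sum_{i<j}P^{(2)}_{ij}\cdot(\text{vertex moments})$ as an asymptotically Gaussian term of variance $\G/2$, independent of the cycle terms. Similarly, identify the diagonal term $N^{-1/2}\sum_k P^{(1)}_{d,kk}$ and its square-sum correction as the source of the $\F'$ term. The labels $3,4$ and $P^{(2)}_d$ only contribute $o(1)$, after the vertex moments are centered. Summing the variances gives $\sum_{k\ge3}\F^k/(2k)+\dots = -\tfrac12(\log(1-\F)+\F+\F^2/2)+\dots$, which equals $2\rho$. The mean $-\rho$ then follows from the identity $\E_W\caL_N\approx1$ once this Gaussian structure is established.

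(iii) Show that the remainder, namely all graphs that are not disjoint unions of the above building blocks (vertices of degree $\ge4$, overlapping cycles, higher labels), is negligible in $L^2$. To do this, compare the second moment $\E_W[\caL_N^2]$ with $\E_W[\exp(2\sum Y_k+\dots)]$. Computing $\E_W$ of a product of two expansions produces a double-replica sum controlled by $\E_{\bsx,\bsx'}[\indi\indi\exp(N\F\langle\bsx,\bsx'\rangle^2/2)]$. This is $O(1)$ by Assumption \ref{assump:second_moment}, with a margin because $\F<\SNR_c$.

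Step (iii) is the main obstacle. Large graphs with high-degree vertices carry prior moments $m_{2r}$ that grow like $r!$. I would control them by using $\Omega_\caX^\epsilon$ to bound $|\sqrt N x_k|\le N^\epsilon$, which pays $N^{O(\epsilon)}$ per excess degree. I would then show that each excess degree costs at least $N^{-1/2}$ in combinatorics, and use the room $\epsilon<1/60$ to absorb the losses.

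For \eqref{eq:convergence_normalized}, write $\|\bsx\|^{-2}=1+O(N^{-1/5})$ on $\Omega_\caX^\epsilon$. The cycle terms are invariant under this rescaling to leading order, because for $k\ge3$ the rescaling error is $O(N^{-1/5})$ times a tight variable. The $2$-cycle and label-$2$ terms change only through $\|\bsx\|^4-1$ multiplied by sums with vanishing $\E_W$, which is again negligible in $L^2$. Hence both limits coincide.
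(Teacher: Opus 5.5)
Your steps (i)--(ii) pick out the right Gaussian building blocks: simple cycles of length $\geq 3$, the sum of the $P^{(2)}_{ij}$, and the sum of the $P^{(1)}_{d,kk}$. Step (iii), however, is where the whole difficulty of the proposition lies, and it has a genuine gap. From the double-replica sum you only extract an $O(1)$ bound. An $O(1)$ bound on $\E_W[\caL_N^2]$ cannot make any remainder small. Negligibility in $L^2$ needs the \emph{exact} limits of $\E_W[\caL_N^2]$ and of the cross term with your main part, and a proof that both equal $e^{2\rho}$.

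The mechanism you actually propose for graphs with vertices of degree $\geq 4$ also fails. Merging two vertices in a multicycle with $m$ edges loses a factor $N$, but it can be done in about $m^2$ ways and picks up $m_4$. So the relative cost of one excess degree is of order $m_4 m^2/N$, which exceeds $N^{-1/2}$ once $m \geq N^{1/4}$. These factors accumulate in graphs whose size is comparable to $N$, and such graphs do occur in the expansion. The $N^{O(\epsilon)}$ losses from $\Omega_\caX^\epsilon$ only make this worse.

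No local counting can repair this, because the conclusion genuinely depends on the prior through $\SNR_c$. As the paper recalls from \cite{AlaouiJordan2018}, for some priors universality holds only below a threshold $\lambda_c<1$; an argument using only $\F<1$ and moment bounds cannot detect such a threshold. There is a related issue: with $\indi(\Omega_\caX^\epsilon)$, and in the normalized model, $\E_\caX$ does not factor over vertices, and truncated and untruncated moments are close only for graphs of bounded size.

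The missing idea is how the paper turns Assumption \ref{assump:second_moment} into an exact, graph-by-graph statement.
\begin{itemize}
\item The margin $\F<\SNR_c$ gives uniform integrability. Together with the CLT for $\sqrt N\langle \bsx,\bsx'\rangle$, this yields the prior-independent limit $(1-\F)^{-1/2}$ for the truncated second moment (Lemma \ref{lem:Gaussian_second_moment}).
\item For Gaussian disorder the coefficients are Hermite polynomials, so distinct multicycles are orthogonal. This second moment is therefore exactly $\sum_\gamma N^{-m^\gamma} Y(\gamma)^2 \E[Z(\gamma)^2]$.
\item Subtracting the same identity for the Rademacher prior (where $Y\equiv 1$) gives $\sum_\gamma (Y(\gamma)^2-1) N^{-m^\gamma}\E[Z(\gamma)^2]=o(1)$.
\item Combine this with the sign dichotomy of Lemma \ref{lem:Y_gamma_dichotomy}: $Y\leq 1$ with $1-Y=o(1)$ for bounded all-degree-$2$ graphs, and $Y>1$ once a vertex of degree $\geq 4$ appears. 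Add the Rademacher cutoff of \cite{AizenmanLebowitzRuelle}. The result is that graphs with $Y>1$ carry negligible total weight at every scale, and a cutoff holds for a general prior (Lemmas \ref{lem:Rademacher_difference_positive} and \ref{lem:cutoff_general}).
\item Distinct multicycles are also orthogonal for your general $P^{(n)}$, up to similarity classes for labels $3,4$. So the weights depend only on $\cn_n,\cn_n'$, and these bounds transfer. This gives $\E[(X-R)^2]=o(1)$, where $R$ is the Rademacher-prior object, and only after that does the cluster expansion take over.
\end{itemize}
Your $N^{O(\epsilon)}$ power counting is legitimate only for upgrading existing edges to labels $3,4$ within a similarity class, since that creates no new vertex entropy.

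Alternatively, you could keep a small-subgraph-conditioning structure. Then you must compute $\E_W[\caL_N^2]\to e^{2\rho}$, and the cross term with a truncated product over cycles of length $\leq K$, exactly from the per-edge expansion, again using the margin for uniform integrability. Some exact second-moment identity of this kind is indispensable.

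A minor point on your bookkeeping. The expansion contains no ``2-cycles built from $P^{(1)}$ twice'', since a double edge carries $P^{(2)}_{ij}$. Also, $\sum_{i<j}(P^{(1)}_{ij})^2/N$ is of order $N$, not $\F/2$. The $\F$ and $\F^2/2$ terms in $\rho$ come from the simple-cycle series starting at length $3$.
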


We prove Proposition \ref{prop:main} in Section \ref{sec:main_proof} by using the multigraph expansion. Note that we do not assume any special relations among the coefficients $P^{(n)}_{ij}$ in Proposition \ref{prop:main}, except some orthogonality conditions. The orthogonality condition, the third assumption in Proposition \ref{prop:main}, is satisfied if there exist symmetric, i.i.d. random variables $V_{ij}$ such that $P^{(1)}_{ij}$ and $P^{(3)}_{ij}$ are odd functions of $V_{ij}$ and $P^{(2)}_{ij}$ and $P^{(4)}_{ij}$ are even functions of $V_{ij}$.

We also remark that the parameter $G_p$ in Theorem \ref{thm:log_LR} corresponds to the fourth moment $w_4$ of the disorder in Theorem \ref{thm:free_energy}, since both stem from the same term in Proposition \ref{prop:main}, the second moment $\G$ of $P_{ij}^{(2)}$.

\section{Proof of main results} \label{sec:thm_proof}

In this section, we prove our main results. We first notice that Theorem \ref{thm:log_LR_1} and Theorem \ref{thm:free_energy_1} are simple corollaries of the general results, Theorem \ref{thm:log_LR} and Theorem \ref{thm:free_energy}.

\begin{proof}[Proof of Theorem \ref{thm:log_LR_1} and Theorem \ref{thm:free_energy_1}]
Theorem \ref{thm:log_LR_1} and Theorem \ref{thm:free_energy_1} are special cases of Theorem \ref{thm:log_LR} and Theorem \ref{thm:free_energy}, respectively, since by definition the strictly sub-Gaussian prior is the sub-Gaussian prior with variance proxy $1$.
\end{proof}

In the rest of the section, we prove Theorem \ref{thm:log_LR} and Theorem \ref{thm:free_energy} by applying Proposition \ref{prop:main}. We first introduce the following shorthand notations, which will be frequently used in the rest of the paper.
\begin{nrem}
	We use the standard big-$O$ and little-$o$ notation: $a_N = O(b_N)$ implies that there exists $N_0$ such that $|a_N| \leq C |b_N|$ for some constant $C>0$ independent of $N$ for all $N \geq N_0$; $a_N = o(b_N)$ implies that for any positive constant $\varepsilon$ there exists $N_0$ such that $|a_N| \leq \varepsilon |b_N|$ for all $N \geq N_0$. 
	
	For probabilistic bounds, we use the notation $X=o_{\caW}(Y)$ or $X=o_{\caX}(Y)$, which implies that for any (small) $\varepsilon >0$, $\p_\caW(|X|>\varepsilon|Y|)\to 0$ or $\p_\caX(|X|>\varepsilon|Y|)\to 0$ as $N\to \infty$, respectively.
	
  For an event $\Omega$, we say that $\Omega$ holds with overwhelming probability if for any (large) $D > 0$ there exists $N_0 \equiv N_0 (D)$ such that $\p(\Omega^c) < N^{-D}$ whenever $N > N_0$. Again, we say $\Omega$ holds with overwhelming probability with respect to $\caW$ (or $\caX$) if $\p_\caW(\Omega^c) < N^{-D}$ (or $\p_\caX(\Omega^c) < N^{-D}$).
	
	For an event $\Omega$, we say that $\Omega$ holds with high probability if $\p(\Omega^c) = o(1)$. Again, we say $\Omega$ holds with high probability with respect to $\caW$ (or $\caX$) if $\p_\caW(\Omega^c) = o(1)$ (or $\p_\caX(\Omega^c) = o(1)$).
	
	For a sequence of random variables, the notation $\Rightarrow$ denotes the convergence in distribution as $N\rightarrow\infty$.
	
	For integers $a$ and $b$, the notation $[[a, b]]$ denotes $\{ n \in \mathbb{Z}: a \leq n \leq b \}$.
\end{nrem}

\subsection{Proof of Theorem \ref{thm:log_LR}} \label{subsec:log_LR}

We begin by introducing a lemma that we will use frequently to neglect rare events in $\caX$.
\begin{lem} \label{lem:truncate}
	Suppose that $R_N$ is a non-negative random variable depending on $W$ and $\bsx$. If $\Omega_N \equiv \Omega_N(\bsx)$ is an event in $\caX$ with $\p_{\caX}(\Omega_N^c) = o(a_N^{-1})$ for some deterministic $a_N$ and $\E_\caW[R_N] =O(a_N)$ almost surely with respect to $\caX$, then
\beq \label{eq:truncate}
\E_\caX [R_N] - \E_\caX [ \indi(\Omega_N) R_N ] = o_\caW(1).
\eeq
\end{lem}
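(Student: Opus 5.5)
The plan is to reduce \eqref{eq:truncate} to Markov's inequality in $\caW$ applied to a non-negative quantity. First note that the left-hand side of \eqref{eq:truncate} equals
\[
	\E_\caX[R_N] - \E_\caX[\indi(\Omega_N) R_N] = \E_\caX[\indi(\Omega_N^c) R_N],
\]
which is non-negative because $R_N \geq 0$. So it suffices to show that for any fixed $\varepsilon > 0$,
\[
	\p_\caW\bigl(\E_\caX[\indi(\Omega_N^c) R_N] > \varepsilon\bigr) \to 0 .
\]

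By Markov's inequality with respect to $\caW$, this probability is bounded by $\varepsilon^{-1} \E_\caW\bigl[\E_\caX[\indi(\Omega_N^c) R_N]\bigr]$. Since $W$ and $\bsx$ are independent and the integrand is non-negative, Tonelli's theorem lets us exchange the order of the expectations:
\[
	\E_\caW\bigl[\E_\caX[\indi(\Omega_N^c) R_N]\bigr] = \E_\caX\bigl[\indi(\Omega_N^c)\, \E_\caW[R_N]\bigr].
\]
Here $\indi(\Omega_N^c)$ depends only on $\bsx$, so it can be pulled out of the inner $\caW$-expectation.

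Next we use the hypothesis $\E_\caW[R_N] \leq C a_N$, which holds for almost every $\bsx$ with a constant $C$ independent of $N$ for all large $N$. This bounds the last expression by $C a_N \p_\caX(\Omega_N^c)$, and $C a_N \p_\caX(\Omega_N^c) = o(1)$ by the assumption $\p_\caX(\Omega_N^c) = o(a_N^{-1})$. Hence the probability in question is at most $\varepsilon^{-1} o(1) \to 0$, which gives \eqref{eq:truncate}.

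There is no real obstacle here. The only point needing care is that the $O(a_N)$ bound on $\E_\caW[R_N]$ must hold uniformly in $\bsx$, almost surely, with a deterministic constant; this uniformity is what allows it to be pulled outside the $\caX$-expectation. Measurability and the use of Tonelli are justified by $R_N \geq 0$.
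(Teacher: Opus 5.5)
Your proposal is correct and follows essentially the same route as the paper. You rewrite the difference as $\E_\caX[\indi(\Omega_N^c) R_N]$, exchange the $\caW$- and $\caX$-expectations, bound the result by $C a_N \p_\caX(\Omega_N^c) = o(1)$, and conclude with Markov's inequality in $\caW$.
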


\begin{proof}[Proof of Lemma \ref{lem:truncate}]
Taking the partial expectation $\E_\caW$ to $\E_\caX [R_N] - \E_\caX [ \indi(\Omega_N) R_N ] = \E_\caX [ \indi(\Omega_N^c) R_N ]$,
\[
	\E_\caW \E_\caX [ \indi(\Omega_N^c) R_N ] = \E_\caX [ \indi(\Omega_N^c) \E_\caW[R_N] ] \leq C a_N \E_\caX [\indi(\Omega_N^c)] = C a_N \p_\caX(\Omega_N^c)
\]
for some constant $C$. Since the right side is $o(1)$, we get the desired lemma by applying Markov's inequality in $\caW$.
\end{proof}

We now prove Theorem \ref{thm:log_LR}.

\begin{proof}[Proof of Theorem \ref{thm:log_LR}]
Recall the definition of the likelihood ratio $\ol \caL(\SNR)$ in Definition \ref{defn:LR_iid}. Note that the entries of $W$ are independent. Since
\[ \begin{split}
	\E_\caW \left[ \frac{p(\sqrt{N} W_{ij} - \sqrt{\SNR N} x_i x_j)}{p(\sqrt{N} W_{ij})} \right] &= \int_{-\infty}^{\infty} \frac{p(w - \sqrt{\SNR N} x_i x_j)}{p(w)} \, p(w) \dd w \\
	&= \int_{-\infty}^{\infty} p(w - \sqrt{\SNR N} x_i x_j) \dd w = 1,
\end{split} \]
and, similarly, $\E_\caW \left[ \frac{p_d(\sqrt{N} W_{kk} - \sqrt{\SNR N} x_k^2)}{p_d(\sqrt{N} W_{kk})} \right] = 1$, we find that $\E_\caW [\ol \caL(\SNR)] = 1$, and hence we can apply Lemma \ref{lem:truncate} in the proof of Theorem \ref{thm:log_LR} to neglect any events in $\caX$ with probability $o(1)$.

Let
\beq \label{eq:Omega_definition}
	\Omega_\caX := \{ \max_{1 \leq k \leq N} |x_k| \leq N^{-1/2 +\eta} \}, \quad \Omega_\caW := \{ \max_{i<j} |W_{ij}| \leq N^{-1/2 +\eta} \}
\eeq
for some sufficiently small $\eta > 0$, which will be explained later. From (high-order) Markov's inequality, we find that $\Omega_\caX$ holds with overwhelming probability with respect to $\caX$ and $\Omega_\caW$ also holds with overwhelming probability with respect to $\caW$. Expanding the product in the right side of \eqref{eq:LR_iid} on $\Omega_\caW$, for the off-diagonal terms, on $\Omega_\caX$, we get
\[ \begin{split}
	&\indi(\Omega_\caX) \prod_{i<j} \frac{p(\sqrt{N} W_{ij} - \sqrt{\SNR N} x_i x_j)}{p(\sqrt{N} W_{ij})}  \\
	&= \indi(\Omega_\caX) \prod_{i<j} \left( 1 + p^{(1)}_{ij} \sqrt{N} x_i x_j + p^{(2)}_{ij} N x_i^2 x_j^2 + p^{(3)}_{ij} N^{3/2} x_i^3 x_j^3 + p^{(4)}_{ij} N^2 x_i^4 x_j^4  \right) + o_\caW(1),  
\end{split} \]
where we used the shorthand notation
\[
	p^{(n)}_{ij} = \frac{(-1)^n \SNR^{n/2} p^{(n)}(\sqrt{N} W_{ij})}{n! p(\sqrt{N} W_{ij})} \qquad (n=1, 2, 3, 4).
\]
Similarly,
\beq \begin{split}
	\indi(\Omega_\caX) \prod_{k=1}^N \frac{p_d(\sqrt{N} W_{kk} - \sqrt{\SNR N} x_k^2)}{p_d(\sqrt{N} W_{kk})} = \indi(\Omega_\caX) \prod_{k=1}^N \left( 1 + p^{(1)}_{d,kk} \sqrt{N} x_k^2 + p^{(2)}_{d,kk} N x_k^4 \right) + o_\caW(1),
\end{split} \eeq
with
\[
	p^{(n)}_{d, kk} = \frac{(-1)^n \SNR^{n/2} p_d^{(n)}(\sqrt{N} W_{kk})}{n! p_d(\sqrt{N} W_{kk})} \qquad (n=1, 2).
\]
We need to choose $\eta$ so small that, assuming $\Omega_\caX$ and $\Omega_\caW$,
\[
	\sum_{i<j}|p^{(5)}(x)/p(x)| N^{5/2} x_i^5 x_j^5 \leq N^{-1/10}, \quad \sum_k|p_d^{(3)}(y)/p_d(y)| N^{3/2} x_k^6 \leq N^{-1/10}
\]
for any $x \in [\sqrt{N} W_{ij} - \sqrt{\SNR N} x_i x_j, \sqrt{N} W_{ij}]$ and any $y \in [\sqrt{N} W_{kk} - \sqrt{\SNR N} x_k^2, \sqrt{N} W_{kk}]$, uniformly in $i, j$, and $k$. (Note that this is possible due to the assumption that the functions $p^{(s)}/p$ and $p_d^{(s)}/p_d$ are polynomially bounded in Assumption \ref{assump:decay}.)

Since $\Omega_\caX^c$ and $(\Omega_\caX^\epsilon)^c$ are the events in $\caX$ with probability $o(1)$, we now find that
\[ \begin{split}
	\ol \caL(\SNR)
	&= \E_\caX \left[ \indi(\Omega_\caX) \indi(\Omega_\caX^\epsilon) \prod_{i<j} \left( 1 + p^{(1)}_{ij} \sqrt{N} x_i x_j + p^{(2)}_{ij} N x_i^2 x_j^2 + p^{(3)}_{ij} N^{3/2} x_i^3 x_j^3 + p^{(4)}_{ij} N^2 x_i^4 x_j^4 \right) \right. \\
	&\qquad \qquad \left. \times \prod_{k=1}^N \left( 1 + p^{(1)}_{d, kk} \sqrt{N} x_k^2 + p^{(2)}_{d, kk} N x_k^4 \right) \right] + o_\caW(1).
\end{split} \]
Note that $\E[p^{(n)}_{ij}] = 0$ for $n=1, 2, 3, 4$ and $\E[p^{(n)}_{d, kk}] = 0$ for $n=1, 2$. Applying Lemma \ref{lem:truncate} again, we get
\[ \begin{split}
	\ol \caL(\SNR)
	&= \E_\caX \left[\indi(\Omega_\caX^\epsilon) \prod_{i<j} \left( 1 + p^{(1)}_{ij} \sqrt{N} x_i x_j + p^{(2)}_{ij} N x_i^2 x_j^2 + p^{(3)}_{ij} N^{3/2} x_i^3 x_j^3 + p^{(4)}_{ij} N^2 x_i^4 x_j^4 \right) \right. \\
	&\qquad \qquad \left. \times \prod_{k=1}^N \left( 1 + p^{(1)}_{d, kk} \sqrt{N} x_k^2 + p^{(2)}_{d, kk} N x_k^4 \right) \right] + o_\caW(1).
\end{split} \]
Since $p^{(n)}_{ij}$ and $p^{(n)}_{d, kk}$ are odd functions if $n$ is odd, and even functions if $n$ even, we find that we can apply Proposition \ref{prop:main} if $\F = \SNR F_p < \SNR_c$. The mean and the variance of the limiting Gaussian can be readily computed from that $\G = \SNR^2 G_p/4$ and $\F' =\SNR F_d$. This proves that Theorem \ref{thm:log_LR} holds for $\ol \caL(\SNR)$ if Assumption \ref{assump:second_moment} holds. Since Assumption \ref{assump:second_moment} holds if the prior is sub-Gaussian with the variance proxy $\sigma^2$ and $\SNR_c < 1/\sigma^2$, as discussed in Section \ref{subsec:threshold}, this proves the first part of Theorem \ref{thm:log_LR}.

To prove the result for $\caL(\SNR)$, we apply the same argument as above, except that we change the definition of $\Omega_\caX$ in \eqref{eq:Omega_definition} to
\[
	\Omega_\caX' := \{ \max_{1 \leq k \leq N} |x_k| \leq N^{-1/2 +\eta} \} \cap \{ | \| \bsx \| - 1 | \leq N^{-1/5} \},
\]
which allows us to expand for the off-diagonal terms
\beq \begin{split}
	&\indi(\Omega_\caX') \prod_{i<j} \frac{p(\sqrt{N} W_{ij} - \sqrt{\SNR N} x_i x_j / \| \bsx \|^2)}{p(\sqrt{N} W_{ij})}  \\
	&= \indi(\Omega_\caX') \prod_{i<j} \left( 1 + p^{(1)}_{ij} \frac{\sqrt{N} x_i x_j}{\| \bsx \|^2} + p^{(2)}_{ij} \frac{N x_i^2 x_j^2}{\| \bsx \|^4} + p^{(3)}_{ij} \frac{N^{3/2} x_i^3 x_j^3}{\| \bsx \|^6} + p^{(4)}_{ij} \frac{N^2 x_i^4 x_j^4}{\| \bsx \|^8} \right) + o_\caW(1),  
\end{split} \eeq
and similarly for the diagonal terms. Since $\Omega_\caX'$ holds with overwhelming probability, this proves that Theorem \ref{thm:log_LR} holds for $\caL(\SNR)$ and completes the proof of Theorem \ref{thm:log_LR}.
\end{proof}

\subsection{Proof of Theorem \ref{thm:free_energy}} \label{subsec:free_energy}

We next prove Theorem \ref{thm:free_energy}. Heuristically, the partition function $Z$ is of order $e^{F(\beta)N}$, which means that we cannot simply assume a high probability event in $\caX$, or even an overwhelming probability event. This property makes the justification of the Taylor expansion of the partition function significantly more complicated than that of the log likelihood we saw in the proof of Theorem \ref{thm:log_LR}. In the first part of the proof of Theorem \ref{thm:free_energy}, we mainly focus on how the strict sub-Gaussianity of the disorder can be applied to resolve this issue.

\begin{proof}[Proof of Theorem \ref{thm:free_energy} - Case I]
Suppose that $\sqrt{N} W_{ij}$ is strictly sub-Gaussian. Define
\beq \begin{split} \label{eq:wt_Z}
	\wt Z &:= e^{-N\beta^2 /4} \exp \left( \beta N \sum_{i<j} W_{ij} \frac{x_i x_j}{\| \bsx \|^2} \right) \\
	&= \exp \left( \beta N \sum_{i<j} W_{ij} \frac{x_i x_j}{\| \bsx \|^2}- \frac{\beta^2 N}{2} \sum_{i<j} \frac{x_i^2 x_j^2}{\| \bsx \|^4} - \frac{\beta^2 N}{4} \sum_{i=1}^N \frac{x_i^4}{\| \bsx \|^4} \right).
\end{split} \eeq
From the strictly sub-Gaussianity,
\[
	\E_\caW \left[ \exp \left( \beta N W_{ij} \frac{x_i x_j}{\| \bsx \|^2} \right) \right] \leq \exp \left( \frac{\beta^2 N}{2} \frac{x_i^2 x_j^2}{\| \bsx \|^4} \right),
\]
and thus from the independence of $\sqrt{N} W_{ij}$'s, we have
\[
	\E_\caW [ \wt Z] \leq \exp \left( - \frac{\beta^2 N}{4} \sum_{i=1}^N \frac{x_i^4}{\| \bsx \|^4} \right) \leq 1.
\]
Let $m_8 := N^4 \E [x_1^8]$ and define
\beq \begin{split} \label{eq:Omega''}
	\Omega'' &:= \{ \max_{1 \leq k \leq N} |x_k| \leq N^{-29/60} \} \cap \{ | \| \bsx \| - 1 | \leq N^{-1/5} \} \cap \big\{ \big| N \sum_i x_i^4 - m_4 \big| \leq N^{-1/5} \big\} \\
	&\qquad \qquad \cap \big\{ \big| N^3 \sum_i x_i^8 - m_8 \big| \leq N^{-1/5} \big\}.
\end{split} \eeq
Since $\Omega''$ and $\Omega_\caX^\epsilon$ hold with overwhelming probability, we find from Lemma \ref{lem:truncate} that 
\[
	e^{-N\beta^2 /4} Z_N = \E_\caX[\wt Z] = \E_\caX[\indi(\Omega'') \indi(\Omega_\caX^\epsilon) \wt Z] + o_\caW(1).
\]

We next expand the sum in the exponent in \eqref{eq:wt_Z}. Let $\Omega_\caW := \{ \max_{i<j} |W_{ij}| \leq N^{-29/60} \}$. Following the argument in the proof of Theorem \ref{thm:log_LR},
\beq \begin{split} \label{eq:P_free_energy_start}
	&\indi(\Omega'') \indi(\Omega_\caX^\epsilon) \exp \left( \beta N \sum_{i<j} W_{ij} \frac{x_i x_j}{\| \bsx \|^2} - \frac{\beta^2 N}{2} \sum_{i<j} \frac{x_i^2 x_j^2}{\| \bsx \|^4} - \frac{\beta^2 N}{4} \sum_{i=1}^N \frac{x_i^4}{\| \bsx \|^4} \right) \\
	&= \indi(\Omega'') \indi(\Omega_\caX^\epsilon) \indi(\Omega_\caW) \exp \left( \beta N \sum_{i<j} W_{ij} \frac{x_i x_j}{\| \bsx \|^2} - \frac{\beta^2 N}{2} \sum_{i<j} \frac{x_i^2 x_j^2}{\| \bsx \|^4} - \frac{\beta^2 N}{4} \sum_{i=1}^N \frac{x_i^4}{\| \bsx \|^4} \right) + o_\caW(1) \\
	&= \indi(\Omega'') \indi(\Omega_\caX^\epsilon) \indi(\Omega_\caW) e^{-\beta^2 m_4 /4} \prod_{i<j} \left( 1 + P^{(1)}_{ij} \frac{\sqrt{N} x_i x_j}{\| \bsx \|^2} + P^{(2)}_{ij} \frac{N x_i^2 x_j^2}{\| \bsx \|^4} + P^{(3)}_{ij} \frac{N^{3/2} x_i^3 x_j^3}{\| \bsx \|^6} + \wt P^{(4)}_{ij} \frac{N^2 x_i^4 x_j^4}{\| \bsx \|^8} \right) \\
	&\qquad + o_\caW(1)
\end{split} \eeq
with
\beq \begin{split} \label{eq:P_free_energy}
	P^{(1)}_{ij} &= \beta \sqrt{N} W_{ij}, \quad P^{(2)}_{ij} = \frac{\beta^2}{2} \left( (\sqrt{N} W_{ij})^2 -1 \right), \quad P^{(3)}_{ij} = \frac{\beta^3}{6} \left( (\sqrt{N} W_{ij})^3 -3 (\sqrt{N} W_{ij}) \right), \\
	\wt P^{(4)}_{ij} &= \frac{\beta^4}{24} \left( (\sqrt{N} W_{ij})^4 -6 (\sqrt{N} W_{ij})^2 + 3 \right).
\end{split} \eeq
Note that $\E[P^{(1)}_{ij}] = \E[P^{(2)}_{ij}] = \E[P^{(3)}_{ij}] = 0$ and $\E[\wt P^{(4)}_{ij}] = \beta^4(w_4 -3)/24$. To make each term in the product centered, we use the relation
\[ \begin{split}
	& \indi(\Omega'') \indi(\Omega_\caX^\epsilon) \indi(\Omega_\caW) \prod_{i<j} \left( 1 + P^{(1)}_{ij} \frac{\sqrt{N} x_i x_j}{\| \bsx \|^2} + P^{(2)}_{ij} \frac{N x_i^2 x_j^2}{\| \bsx \|^4} + P^{(3)}_{ij} \frac{N^{3/2} x_i^3 x_j^3}{\| \bsx \|^6} + \wt P^{(4)}_{ij} \frac{N^2 x_i^4 x_j^4}{\| \bsx \|^8} \right)  \\
	&= \indi(\Omega'') \indi(\Omega_\caX^\epsilon) \indi(\Omega_\caW) \prod_{i<j} \left( 1 + P^{(1)}_{ij} \frac{\sqrt{N} x_i x_j}{\| \bsx \|^2} + P^{(2)}_{ij} \frac{N x_i^2 x_j^2}{\| \bsx \|^4} + P^{(3)}_{ij} \frac{N^{3/2} x_i^3 x_j^3}{\| \bsx \|^6} + P^{(4)}_{ij} \frac{N^2 x_i^4 x_j^4}{\| \bsx \|^8} \right) \\
	&\qquad \qquad \qquad \times \left( 1 + \frac{\beta^4 (w_4-3)}{24} \frac{N^2 x_i^4 x_j^4}{\| \bsx \|^8} \right) + o_\caW(1),
\end{split} \]
which replaces $\wt P^{(4)}_{ij}$ in \eqref{eq:P_free_energy} by
\[
P^{(4)}_{ij} = \frac{\beta^4}{24}\left((\sqrt{N} W_{ij})^4 -6 (\sqrt{N} W_{ij})^2 + 6 -w_4\right).
\]
We can easily check the orthogonality conditions of $P_{ij}^{(n)}$.
Note that the additional factor can be approximated by
\[ \begin{split}
	&\indi(\Omega'') \indi(\Omega_\caX^\epsilon) \prod_{i<j} \left( 1 + \frac{\beta^4 (w_4-3)}{24} \frac{N^2 x_i^4 x_j^4}{\| \bsx \|^8} \right) = (1+O(N^{-1})) \indi(\Omega'') \prod_{i<j} \exp \left( \frac{\beta^4 (w_4-3)}{24} \frac{N^2 x_i^4 x_j^4}{\| \bsx \|^8} \right)  \\
	&= (1+O(N^{-1})) \indi(\Omega'') \indi(\Omega_\caX^\epsilon) \exp \left( \frac{\beta^4 (w_4-3)}{24} N^2 \sum_{i<j} \frac{x_i^4 x_j^4}{\| \bsx \|^8} \right) \\
	&= (1+O(N^{-1/5})) \indi(\Omega'') \indi(\Omega_\caX^\epsilon) \exp \left( \frac{\beta^4 (w_4-3) m_4^2}{48} \right).
\end{split} \]

So far, we have seen that
\[ \begin{split}
	&e^{-N\beta^2 /4} Z_N = \E_\caX[\indi(\Omega'') \indi(\Omega_\caX^\epsilon) \wt Z] + o_\caW(1) \\
	&=\E_\caX \Bigg[ \indi(\Omega'') \indi(\Omega_\caX^\epsilon) \indi(\Omega_\caW) e^{-\beta^2 m_4 /4} (1+O(N^{-1/5})) \exp \left( \frac{\beta^4 (w_4-3) m_4^2}{48} \right) \\
	&\qquad \qquad \times \prod_{i<j} \left( 1 + P^{(1)}_{ij} \frac{\sqrt{N} x_i x_j}{\| \bsx \|^2} + P^{(2)}_{ij} \frac{N x_i^2 x_j^2}{\| \bsx \|^4} + P^{(3)}_{ij} \frac{N^{3/2} x_i^3 x_j^3}{\| \bsx \|^6} + P^{(4)}_{ij} \frac{N^2 x_i^4 x_j^4}{\| \bsx \|^8} \right) \Bigg] + o_\caW(1) \\
	&=\indi(\Omega_\caW) e^{-\beta^2 m_4 /4} (1+O(N^{-1/5})) \exp \left( \frac{\beta^4 (w_4-3) m_4^2}{48} \right)  \\
	&\quad \times \E_\caX \Bigg[ \indi(\Omega'') \indi(\Omega_\caX^\epsilon) \prod_{i<j} \left( 1 + P^{(1)}_{ij} \frac{\sqrt{N} x_i x_j}{\| \bsx \|^2} + P^{(2)}_{ij} \frac{N x_i^2 x_j^2}{\| \bsx \|^4} + P^{(3)}_{ij} \frac{N^{3/2} x_i^3 x_j^3}{\| \bsx \|^6} + P^{(4)}_{ij} \frac{N^2 x_i^4 x_j^4}{\| \bsx \|^8} \right) \Bigg] + o_\caW(1),
\end{split} \]
which shows that 
\[ \begin{split}
	&(1+O(N^{-1/5})) \exp \left( -\frac{N \beta^2}{4} + \frac{\beta^2 m_4}{4} - \frac{\beta^4 (w_4-3) m_4^2}{48} \right) Z_N  \\
	& =\E_\caX \left[ \indi(\Omega'') \indi(\Omega_\caX^\epsilon) \prod_{i<j} \left( 1 + P^{(1)}_{ij} \frac{\sqrt{N} x_i x_j}{\| \bsx \|^2} + P^{(2)}_{ij} \frac{N x_i^2 x_j^2}{\| \bsx \|^4} + P^{(3)}_{ij} \frac{N^{3/2} x_i^3 x_j^3}{\| \bsx \|^6} + P^{(4)}_{ij} \frac{N^2 x_i^4 x_j^4}{\| \bsx \|^8} \right) \right] + o_\caW(1).
\end{split} \]
Now, as in the proof of Theorem \ref{thm:log_LR}, we can apply Proposition \ref{prop:main} to find
\[ \begin{split}
\left( -\frac{N \beta^2}{4} + \frac{\beta^2 m_4}{4} - \frac{\beta^4 (w_4-3) m_4^2}{48} \right) + \log Z \Rightarrow \caN(-\rho, 2\rho),
\end{split} \]
which proves that $N(F_N(\beta) - F(\beta))$ converges in distribution to a Gaussian if Assumption \ref{assump:second_moment} holds. 

It remains to find the mean and the variance of the limiting distribution of $N(F_N(\beta) - F(\beta))$. Since
\[
	\F = \beta^2, \quad \G = \beta^4 (w_4 -1)/4, \quad \F' = 0,
\]
it can be readily checked that $\rho$ in Proposition \ref{prop:main} is given by
\[
	\rho = -\frac{1}{4} \left( \log (1- \beta^2) + \beta^2 - \frac{\beta^4 (w_4 -3)}{4} \right),
\]
and the variance $V_F = 2\rho$. Finally, the mean $m_F$ is given by
\[
	m_F = -\rho + \left( \frac{w_4-3}{48} \beta^4 m_4^2 - \frac{\beta^2}{4} m_4 \right) = \frac{1}{4} \log (1- \beta^2) + \frac{\beta^2 (1-m_4)}{4} + \frac{\beta^4 (w_4-3)(m_4^2 -3)}{48}.
\]
This proves Theorem \ref{thm:free_energy} when $\sqrt{N} W_{ij}$ is strictly sub-Gaussian under Assumption \ref{assump:second_moment}. Since Assumption \ref{assump:second_moment} holds if the prior is sub-Gaussian with the variance proxy $\sigma^2$ and $\SNR_c < 1/\sigma^2$, as discussed in Section \ref{subsec:threshold}, this completes the proof of Theorem \ref{thm:free_energy}.
\end{proof}

In the second part of the proof of Theorem \ref{thm:free_energy}, we use a good property of the prior to justify the Taylor expansion.

\begin{proof}[Proof of Theorem \ref{thm:free_energy} - Case II]
Define
\[
	\Omega^{(0)} = \{ \| \bsx \| \geq N^{-1/40} \}.
\]
Let $N_i := \{ i: x_i^2 \geq N^{-51/50} \}$. If $|N_i| > N^{49/50}$, then
\[
	\sum_i x_i^2 \geq \sum_{i \in N_i} x_i^2 \geq N^{49/50} N^{-51/50} > N^{-1/20},
\]
and $\Omega^{(0)}$ holds. Hence,
\beq \label{eq:Omega^0_bound_1}
	\p( (\Omega^{(0)})^c ) \leq \sum_{r=0}^{\lfloor N^{49/50} \rfloor} \p (|N_i| = r).
\eeq
Since the distribution of $\sqrt{N} x_i$ is absolutely continuous in a neighborhood of $0$, there exists a constant $C_0$ such that
\[
	\p( x_i^2 < N^{-51/50} ) = \p( \sqrt{N}|x_i| < N^{-1/100} ) \leq C_0 N^{-1/100}.
\]
Thus,
\[
	\p (|N_i| = r) = \binom{N}{r} \p( x_i^2 < N^{-51/50} )^{N-r} \p( x_i^2 \geq N^{-51/50} )^r \leq \frac{N^r}{r!} C_0^{N-r} N^{-(N-r)/100},
\]
and from \eqref{eq:Omega^0_bound_1} we get
\[
	\p( (\Omega^{(0)})^c ) \leq \sum_{r=0}^{\lfloor N^{49/50} \rfloor} \frac{N^r}{r!} C_0^{N-r} N^{-(N-r)/100} < N^{(101/100) N^{49/50}} e^{1/C_0} C_0^N N^{-N/100}.
\]
This in particular shows that $\p( (\Omega^{(0)})^c ) = o(C^{-N})$ for any constant $C>0$.

Recall the definition of $\wt Z$ in \eqref{eq:wt_Z}. It is a well-known fact from random matrix theory that for any $\delta > 0$, $\| W \| \leq 2+\delta$ with overwhelming probability. (See, e.g., \cite{erdHos2012rigidity}.) Thus, letting 
\[
	\wt \Omega_\caW := \{ \| W \| < 2+\delta \} \cap \{ \max_{i<j} |W_{ij}| \leq N^{-29/60} \},
\]
we have 
\[
	\indi(\wt \Omega_\caW) \left| \beta N \sum_{i<j} W_{ij} \frac{x_i x_j}{\| \bsx \|^2} \right| \leq \indi(\wt \Omega_\caW) \frac{\beta(2+\delta) N}{2}
\]
where the factor $1/2$ is due to that we only consider $i<j$ in the sum in the partition function. In particular, $\E_\caW[\indi(\wt \Omega_\caW) \wt Z] \leq e^{C_1 N}$ for some constant $C_1 > 0$. We now find from Lemma \ref{lem:truncate} that
\[
	e^{-N\beta^2 /4} Z_N= \E_\caX[\indi(\wt \Omega_\caW) \wt Z] + o_\caW(1) = \E_\caX[\indi(\Omega^{(0)}) \indi(\wt \Omega_\caW) \wt Z] + o_\caW(1).
\]

Expanding the sum in the exponent in the definition of $\wt Z$ in \eqref{eq:wt_Z} on $\Omega^{(0)}$ and $\wt \Omega_\caW$,
\[ \begin{split}
	&\E_\caX [\indi(\Omega^{(0)}) \indi(\wt \Omega_\caW) \wt Z] \\
	&= \E_\caX \left[\indi(\Omega^{(0)}) \indi(\wt \Omega_\caW) \prod_{i<j} \left( 1 +P^{(1)}_{ij} \frac{\sqrt{N} x_i x_j}{\| \bsx \|^2} + P^{(2)}_{ij} \frac{N x_i^2 x_j^2}{\| \bsx \|^4} + P^{(3)}_{ij} \frac{N^{3/2} x_i^3 x_j^3}{\| \bsx \|^6} + \wt P^{(4)}_{ij} \frac{N^2 x_i^4 x_j^4}{\| \bsx \|^8} \right) \right. \\
	&\qquad \qquad \times \left. \exp \left( -\frac{\beta^2 N}{4} \sum_{i=1}^N \frac{x_i^4}{\| \bsx \|^4} \right) \right] + o_\caW(1) \\
	&= \E_\caX \left[\indi(\Omega^{(0)}) \prod_{i<j} \left( 1 + P^{(1)}_{ij} \frac{\sqrt{N} x_i x_j}{\| \bsx \|^2} + P^{(2)}_{ij} \frac{N x_i^2 x_j^2}{\| \bsx \|^4} + P^{(3)}_{ij} \frac{N^{3/2} x_i^3 x_j^3}{\| \bsx \|^6} + \wt P^{(4)}_{ij} \frac{N^2 x_i^4 x_j^4}{\| \bsx \|^8} \right) \right. \\
	&\qquad \qquad \times \left. \exp \left( -\frac{\beta^2 N}{4} \sum_{i=1}^N \frac{x_i^4}{\| \bsx \|^4} \right) \right] + o_\caW(1) 	
\end{split} \]
where $P^{(n)}_{ij}$ is as defined in \eqref{eq:P_free_energy}. Taking partial expectation $\E_\caW$, since $|\sqrt{N} x_i| \leq K$ for some constant $K$,
\[ \begin{split}
	&\E_\caW \left[ \indi(\Omega^{(0)}) \prod_{i<j} \left( 1 + P^{(1)}_{ij} \frac{\sqrt{N} x_i x_j}{\| \bsx \|^2} + P^{(2)}_{ij} \frac{N x_i^2 x_j^2}{\| \bsx \|^4} + P^{(3)}_{ij} \frac{N^{3/2} x_i^3 x_j^3}{\| \bsx \|^6} + \wt P^{(4)}_{ij} \frac{N^2 x_i^4 x_j^4}{\| \bsx \|^8} \right) \right] \\
	&\leq \prod_{i<j} \left( 1 + \frac{\beta^4(w_4 -3) K^8}{24} N^{-9/5} \right) \leq C^{N^{1/5}}
\end{split} \]
for some constant $C$. Define
\[
	\Omega^{(1)} := \{ | \| \bsx \|^2 - 1 | \leq N^{-1/5} \} \cap \big\{ \big| N \sum_i x_i^4 - m_4 \big| \leq N^{-1/5} \big\} \cap \big\{ N \sum_i x_i^8 \leq N^{-1} \big\}.
\]
Since $|\sqrt{N} x_i|$ is bounded, by Hoeffding's inequality, $\p( (\Omega^{(1)})^c ) < c_0^{-N^{3/5}}$ for some constant $c_0>1$. Thus, from Lemma \ref{lem:truncate},
\[ \begin{split}
	&\E_\caX [\indi(\Omega^{(0)}) \indi(\wt \Omega_\caW) \wt Z] \\
	&= \E_\caX \left[\indi(\Omega^{(1)}) \indi(\Omega^{(0)}) \prod_{i<j} \left(1 + P^{(1)}_{ij} \frac{\sqrt{N} x_i x_j}{\| \bsx \|^2} + P^{(2)}_{ij} \frac{N x_i^2 x_j^2}{\| \bsx \|^4} + P^{(3)}_{ij} \frac{N^{3/2} x_i^3 x_j^3}{\| \bsx \|^6} + \wt P^{(4)}_{ij} \frac{N^2 x_i^4 x_j^4}{\| \bsx \|^8} \right) \right. \\
	&\qquad \qquad \times \left. \exp \left( -\frac{\beta^2 N}{4} \sum_{i=1}^N \frac{x_i^4}{\| \bsx \|^4} \right) \right] + o_\caW(1) \\
	&= \E_\caX \left[\indi(\Omega^{(1)}) \prod_{i<j} \left( 1 + P^{(1)}_{ij} \frac{\sqrt{N} x_i x_j}{\| \bsx \|^2} + P^{(2)}_{ij} \frac{N x_i^2 x_j^2}{\| \bsx \|^4} + P^{(3)}_{ij} \frac{N^{3/2} x_i^3 x_j^3}{\| \bsx \|^6} + \wt P^{(4)}_{ij} \frac{N^2 x_i^4 x_j^4}{\| \bsx \|^8} \right) \right. \\
	&\qquad \qquad \times \left. \exp \left( -\frac{\beta^2 N}{4} \sum_{i=1}^N \frac{x_i^4}{\| \bsx \|^4} \right) \right] + o_\caW(1),
\end{split} \]
where in the last line we used that $\Omega^{(1)} \subset \Omega^{(0)}$.

We now have a result analogous to \eqref{eq:P_free_energy_start}. Following the proof for Case I, we can complete the proof for Case II. We omit the detail.
\end{proof}

\section{Multigraph expansion - Idea of proof} \label{sec:multigraph}

In this section, we introduce the multigraph expansion and the main idea of our proof of Proposition \ref{prop:main}.

\subsection{Graph expansion}

As a toy model, suppose that we consider an object of the form
\beq \label{eq:expansion_example}
	\E_\caX \left[ \prod_{i<j} \left( 1 + P^{(1)}_{ij} \sqrt{N} x_i x_j + P^{(2)}_{ij} N x_i^2 x_j^2  \right) \right],
\eeq
for certain random variables $P^{(1)}_{ij}$'s and $P^{(2)}_{ij}$'s, where $\caX$ is the Rademacher prior. Note that we can first simplify the product by using the property that $x_i^2 = x_j^2 = N^{-1}$. Further, all monomials appearing in the product $\prod_{i<j} ( 1 + P^{(1)}_{ij} \sqrt{N} x_i x_j + P^{(2)}_{ij} N x_i^2 x_j^2)$ vanishes after taking the partial expectation $\E_\caX$ except the ones whose exponents are all even. This suggests to first decompose the product into two parts, one containing the cycles with nodes corresponding to $x_1, \dots, x_N$ and the edges corresponding to $P^{(1)}_{ij}$'s, and the other an object consisting of $P^{(2)}_{ij}$'s, independent of the prior. The analysis in this case is essentially the same as in the cluster expansion. 

When the prior is not Rademacher, however, the expansion is different from the conventional cluster expansion. For example, in the expansion of \eqref{eq:expansion_example}, we can find a term 
\[
	\left( N P^{(2)}_{12} x_1^2 x_2^2 \right) \left( N P^{(2)}_{23} x_2^2 x_3^2 \right) = N^2 P^{(2)}_{12} P^{(2)}_{23} x_1^2 x_2^4 x_3^2,
\]
which is considered as a monomial of the variables $x_1, x_2, x_3$ with the coefficient $N^2 P^{(2)}_{12} P^{(2)}_{23}$. Then, even if $x_i$'s are independent and $\E[x_i^2] = 1/N$, we still have
\[
	\E_\caX\left[ N^2 P^{(2)}_{12} P^{(2)}_{23} x_1^2 x_2^4 x_3^2 \right] = P^{(2)}_{12} P^{(2)}_{23} \E_\caX[x_2^4],
\]
which means that we need to keep track of the exponents of $x_i$'s. For systematic analysis, we use a one-to-one correspondence between the monomials in the expansion of \eqref{eq:expansion_example} and graphs with $N$ nodes, where the exponent of $x_i$ in a monomial in the expansion is equal to the degree of the node $i$ in a graph. In the example above, since the exponents of the variables $x_1, x_2, x_3$ are $2, 4, 2$, respectively, we are led to consider a graph with the nodes $1, 2, 3$ with the degrees $2, 4, 2$, respectively. (See the left graph in Figure \ref{fig:butterfly_graph}.) It is inevitable that some of the nodes in the graph must be joined by double edges since the term containing $x_i^2 x_j^2$ increases the exponents by $2$ for both $x_i$ and $x_j$, or equivalently the degrees of the nodes $i$ and $j$. Since the term $x_i^2 x_j^2$ is paired with the random variable $P_{ij}^{(2)}$ in the expansion, we can set a rule that $P_{ij}^{(2)}$ is attached to the double edge between the nodes $i$ and $j$. Similarly, $P_{ij}^{(1)}$ is attached to a single edge between the nodes $i$ and $j$. In a typical graph, we can find that some edges are single while the others are double; see, e.g., the right graph in Figure \ref{fig:butterfly_graph}.

\begin{figure}[htbp]
    \centering
    \begin{tikzpicture}[
        every node/.style={circle, draw, fill=blue!10, minimum size=20pt},
        node distance=2cm,
        line width=0.8pt
    ]
		
			\begin{scope}[xshift=-8cm, node distance=1.5cm]
          \node (n1) at (0,0) {1};
          \node (n2) [right of=n1] {2};
          \node (n3) [right of=n2] {3};
          
          \draw ([yshift=2pt]n1.east) -- ([yshift=2pt]n2.west);
          \draw ([yshift=-2pt]n1.east) -- ([yshift=-2pt]n2.west);
          
          \draw ([yshift=2pt]n2.east) -- ([yshift=2pt]n3.west);
          \draw ([yshift=-2pt]n2.east) -- ([yshift=-2pt]n3.west);
      \end{scope}
    
			\begin{scope}[node distance=1.5cm]
          \node (3) at (0,0) {3};
          \node (1) [above left of=3] {1};
          \node (2) [below left of=3] {2};
          \node (4) [above right of=3] {4};
          \node (5) [below right of=3] {5};
          \node (6) [right of=5] {6};
        
          \draw (1) -- (2);
          \draw (1) -- (3);
          \draw (2) -- (3);
          \draw (3) -- (4);
          \draw (3) -- (5);
          \draw (4) -- (5);
          
          \draw ([yshift=2pt]5.east) -- ([yshift=2pt]6.west);
          \draw ([yshift=-2pt]5.east) -- ([yshift=-2pt]6.west);
      \end{scope}

    \end{tikzpicture}
    \caption{Left: A multicycle with a degree-$4$ node and two double edges. Right: A multicycle with two degree-$4$ nodes ($3$ and $5$) and a double edge.}
    \label{fig:butterfly_graph}
\end{figure}

For the proof of Proposition \ref{prop:main}, we need to consider even more complicated graphs, where four different types of random variables $P^{(n)}_{ij}$ for $n=1, 2, 3, 4$ in \eqref{eq:convergence_iid} or \eqref{eq:convergence_normalized} can be attached to each edge, and also other types of random variables $P^{(n)}_{d, kk}$ ($n=1, 2$) should be considered. This will be done by using the multigraphs.

\subsection{Definition of multigraphs} \label{subsec:multigraph}

We introduce a one-to-one correspondence between each term in the product in \eqref{eq:convergence_iid} or \eqref{eq:convergence_normalized} and a multigraph with $N$ nodes for which we will use the following notations:
\begin{defn}[$\ell$-multigraph] \label{def:l-graph}
We say that a multigraph $\gamma$ with nodes $1, 2, \dots, N$ is an $\ell$-multigraph if any pair of nodes can be connected by at most $\ell$ edges. The multiplicity $m^\gamma_{ij}$ between the nodes $i$ and $j$ is the number of separate edges connecting them, and we let $m^\gamma_{ij} = 0$ if the nodes $i$ and $j$ are not connected. (It is possible that $m_{kk}^{\gamma} > 0$, i.e., self-loops are allowed.) If $m^\gamma_{ij} = 1, 2, 3, 4$, we say that the edge between the nodes $i$ and $j$ is single, double, triple, quadruple, respectively.

The degree of the node $i$ is defined by
\[
	d^\gamma_i = m^\gamma_{ii} + \sum_{j=1}^N m^\gamma_{ij}.
\]
In particular, a single self-loop at the node $i$ increases the degree of the node $i$ by $2$. The size $m^\gamma$ and the length $n^\gamma$ of a multigraph $\gamma$ are defined by
\[
	m^\gamma = \sum_{i\leq j} m^\gamma_{ij} = \frac{1}{2} \sum_{i=1}^N d^\gamma_i, \quad n^\gamma = \sum_{i\leq j} \indi(m^\gamma_{ij} > 0),
\]
respectively. The number of the single (respectively, double, triple, quadruple) edges $n_1^\gamma$ (respectively, $n_2^\gamma, n_3^\gamma, n_4^\gamma$) is defined by
\[
	n_r^\gamma = \sum_{i\leq j} \indi(m^\gamma_{ij} = r), \quad r=1, 2, 3, 4.
\]
Similarly, the number of the single (respectively, double) self-loops $\tilde n_1^\gamma$ (respectively, $\tilde n_2^\gamma$) is defined by
\[ \tilde n_r^\gamma = \sum_{k=1}^{N}\indi(m^\gamma_{kk} = r), \quad r=1,2.\]

The set of all $\ell$-multigraphs with single and/or double self-loops is denoted by $\caG^\ell$. (The empty graph is a multigraph and included in $\caG^\ell$.)
\end{defn}

To demonstrate the multigraph expansion, we first recall the product in \eqref{eq:convergence_iid},
\[ \begin{split}
	&\prod_{i<j} \left( 1 + P^{(1)}_{ij} \sqrt{N} x_i x_j + P^{(2)}_{ij} N x_i^2 x_j^2 + P^{(3)}_{ij} N^{3/2} x_i^3 x_j^3 + P^{(4)}_{ij} N^2 x_i^4 x_j^4 \right) \\
	&\qquad \times \prod_{k=1}^N \left( 1 + P^{(1)}_{d, kk} \sqrt{N} x_k^2 + P^{(2)}_{d, kk} N x_k^4 \right).
\end{split} \]
We expand this product and rewrite it by using the $4$-multigraphs as
\[ \begin{split}
	& \sum_{\gamma \in \caG^4} \left( \prod_{i<j} P_{ij}^{(m^\gamma_{ij})} (\sqrt{N} x_i x_j)^{m^\gamma_{ij}} \right) \left( \prod_{k=1}^N P_{d, kk}^{(m^\gamma_{kk})} (\sqrt{N} x_k^2)^{m^\gamma_{kk}} \right) \\
	&= \sum_{\gamma \in \caG^4} N^{m^\gamma/2} \left( \prod_{i<j} P_{ij}^{(m^\gamma_{ij})} \right) \left( \prod_{k=1}^N P_{d, kk}^{(m^\gamma_{kk})} \right) \left( \prod_{k=1}^N x_k^{d^\gamma_k} \right),
\end{split} \]
where we define $P_{ij}^{(0)} = P_{d, kk}^{(0)} = 1$. A similar expansion is also possible for the product in \eqref{eq:convergence_normalized}.

Due to the symmetry of the prior, we find that if any of the nodes in $\gamma$ has an odd degree then $\E_\caX \left[ \prod_{k=1}^N x_k^{d^\gamma_k} \right]= 0$. Thus, we are led to consider $4$-multigraphs with even degrees for which we use the following definition:
\begin{defn}[$\ell$-multicycle] \label{def:l-cycle}
We say that a multigraph $\gamma$ is a multicycle if the degree of any node of $\gamma$ is even. The set of all $\ell$-multicycles is denoted by $\caG_L^\ell$, and the set of all $\ell$-multicycles without self-loops is denoted by $\caG_C^\ell$
\end{defn}

Note that the index $\ell$ in the multicycle (or the multigraph) denotes the maximum number of the parallel edges between the nodes of the multigraph, not the length of the multigraph as in some literature in graph theory.

From now on, we will exclusively consider the multicycles. More precisely, we decompose the prior by letting
\beq \label{eq:def_y}
	x_i = \frac{1}{\sqrt{N}} v_i y_i, \quad \text{ or } \quad \frac{x_i}{\| \bsx \|} = \frac{1}{\sqrt{N}} v_i y_i,
\eeq
depending on whether we consider \eqref{eq:convergence_iid} or \eqref{eq:convergence_normalized}, where $v_i$'s are independent Rademacher random variables and $y_i$'s are non-negative identically distributed random variables. Denote by $\caY$ the probability space for $\bsy=(y_1, \ldots, y_N)$. Since $v_i$'s are Rademacher random variables, by considering the expectation with respect to $v_i$ first, and then the expectation with respect to $y_i$'s, we are led to consider
\beq \label{eq:X_before_conditioning}
	\sum_{\gamma \in \caG_L^4} \frac{1}{N^{m^\gamma/2}} \E_\caY \left[ \prod_{k=1}^N y_k^{d^\gamma_k} \right] \left( \prod_{i<j} P_{ij}^{(m^\gamma_{ij})} \right) \left( \prod_{k=1}^N P_{d, kk}^{(m^\gamma_{kk})} \right).
\eeq

\subsection{Notations and assumptions for the multigraph expansion}

Instead of analyzing the object in \eqref{eq:X_before_conditioning} directly, we use the `conditioning', i.e., we assume a high probability event with respect to $\caX$ in the analysis of \eqref{eq:X_before_conditioning}. Since
\[
	\E \left[ \left( \prod_{i<j} P_{ij}^{(m^\gamma_{ij})} \right) \left( \prod_{k=1}^N P_{d, kk}^{(m^\gamma_{kk})} \right) \right] = 0
\]
for any $\gamma \in \caG_L^4$ except when $\gamma$ is empty, by applying Lemma \ref{lem:truncate}, we can see that any event with probability $o(1)$ is negligible in the proof of Proposition \ref{prop:main}. 

One of the most important features of the conditioning is that we need Assumption \ref{assump:second_moment} to control large graphs. In fact, we will strengthen Assumption \ref{assump:second_moment} as follows.

\begin{lem} \label{lem:Gaussian_second_moment}
Suppose that Assumption \ref{assump:second_moment} holds with a constant $\SNR_c$. Then, there exists a sequence of events $\Omega_N'$ in $\caX$ with $\p(\Omega_N'^c) = o(1)$ such that for any $\SNR < \SNR_c$,
\beq \label{eq:Gaussian_second_moment_1}
	\E_{\bsx, \bsx' \sim \caX} \left[ \indi(\Omega_N' (\bsx)) \indi(\Omega_N' (\bsx')) \exp \left( \frac{N \SNR \langle \bsx, \bsx' \rangle^2}{2} \right) \right] = (1-\SNR)^{-1/2} + o(1)
\eeq
and
\beq \label{eq:Gaussian_second_moment_2}
	\E_{\bsx, \bsx' \sim \caX} \left[ \indi(\Omega_N' (\bsx)) \indi(\Omega_N' (\bsx')) \exp \left( \frac{N \SNR \langle \bsx, \bsx' \rangle^2}{2 \| \bsx \|^2 \| \bsx' \|^2} \right) \right] = (1-\SNR)^{-1/2} + o(1).
\eeq
\end{lem}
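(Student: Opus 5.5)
We will take $\Omega_N' := \Omega_N \cap \Omega_\caX^\epsilon$, where $\Omega_N$ comes from Assumption \ref{assump:second_moment}. The point is that the central limit theorem gives the main contribution from the region where the overlap is $O(N^{-1/2})$, while the assumed bound at the larger constant $\SNR_c$ gives uniform integrability, which makes the tails negligible. Write $u := \sqrt{N}\langle \bsx, \bsx'\rangle = \sum_k (\sqrt N x_k)(\sqrt N x_k')/\sqrt N$. Under $\bsx,\bsx'\sim\caX$ independent, the summands $(\sqrt N x_k)(\sqrt N x_k')$ are i.i.d. and centered with variance $1$, and their law does not depend on $N$. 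So $u\Rightarrow \caN(0,1)$ by the classical CLT. Since $\p(\Omega_N'^c)=o(1)$, the same holds for $u$ conditioned on $\Omega_N'(\bsx)\cap\Omega_N'(\bsx')$, or weighted by its indicator. For each fixed $\SNR<1$ we have $\E[e^{\SNR Z^2/2}]=(1-\SNR)^{-1/2}$ for $Z\sim\caN(0,1)$, and this is the target value.

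The first step is truncation. For $M>0$, split the expectation in \eqref{eq:Gaussian_second_moment_1} according to whether $|u|\le M$ or $|u|>M$. On $\{|u|\le M\}$ the function $e^{\SNR u^2/2}\indi(|u|\le M)$ is bounded and continuous except at $|u|=M$, a set of Gaussian measure zero. Weak convergence therefore gives convergence to $\E[e^{\SNR Z^2/2}\indi(|Z|\le M)]$. The indicator factors $\indi(\Omega_N')$ change this by at most $e^{\SNR M^2/2}\,\p(\Omega_N'^c)\cdot 2=o(1)$. For the tail, on $\{|u|>M\}$ we use
\[
e^{\SNR u^2/2}\le e^{-(\SNR_c-\SNR)M^2/2}\, e^{\SNR_c u^2/2}.
\]
Together with $\Omega_N'\subset\Omega_N$ and \eqref{eq:second_moment_formula}, this bounds the tail contribution by $C e^{-(\SNR_c-\SNR)M^2/2}$, uniformly in $N$. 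Letting $N\to\infty$ and then $M\to\infty$ gives \eqref{eq:Gaussian_second_moment_1}.

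One point needs care: the argument requires $\SNR<1$, and also that the limit is finite. This follows because $\SNR_c\le 1$ is forced. If $\SNR_c>1$, then Fatou's lemma applied along the weak convergence would give $\liminf$ of the left side of \eqref{eq:second_moment_formula} $\ge \E[e^{\SNR_c Z^2/2}\indi(|Z|\le M)]\to\infty$, contradicting $O(1)$. (In the boundary case $\SNR_c=1$, any $\SNR<\SNR_c$ still satisfies $\SNR<1$.)

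For \eqref{eq:Gaussian_second_moment_2}, set $\tilde u:=u/(\|\bsx\|\|\bsx'\|)$. On $\Omega_\caX^\epsilon$ we have $\|\bsx\|,\|\bsx'\|\in[1-N^{-1/5},1+N^{-1/5}]$, so $\tilde u^2\le (1+CN^{-1/5})u^2$. By Slutsky's theorem, $\tilde u\Rightarrow\caN(0,1)$ under the weighted measure. The tail bound is handled as before: choose $\SNR'$ with $\SNR<\SNR'<\SNR_c$. Then for large $N$, $\SNR\tilde u^2\le\SNR' u^2$ on the event, and the same splitting with $\SNR'$ in place of $\SNR$ in the tail estimate applies. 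The main obstacle is precisely this uniform-integrability step. It is the reason the lemma only holds for $\SNR$ strictly below $\SNR_c$, and it must be arranged so that the estimate holds under the intersected event $\Omega_N'$ and not merely under $\Omega_N$.
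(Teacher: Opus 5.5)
Your proposal is correct and follows essentially the same route as the paper. Both arguments use the CLT for $\sqrt{N}\langle \bsx,\bsx'\rangle$ plus uniform integrability supplied by the bound at the larger constant $\SNR_c$ — the paper via an $L^{1+\delta}$ bound with $\SNR(1+\delta)<\SNR_c$, you via explicit truncation at $|u|=M$ — and both handle the normalized case by intersecting $\Omega_N$ with an event on which $\|\bsx\|$ is close to $1$. Your explicit check that $\SNR_c\le 1$ is forced is a small addition the paper leaves implicit; the same Fatou argument in fact gives $\SNR_c<1$, since $\E[e^{Z^2/2}]=\infty$ for standard Gaussian $Z$.
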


We will prove Lemma \ref{lem:Gaussian_second_moment} in Section \ref{sec:proofs}. In the rest of the paper, we will assume that the following event holds:
\begin{defn} \label{defn:high-probability}
We assume that $\Omega = \Omega'_N \cap \Omega'' \cap \Omega_\caX^\epsilon$, where $\Omega'_N$ is the event in Lemma \ref{lem:Gaussian_second_moment}, $\Omega''$ in \eqref{eq:Omega''}, and $\Omega_\caX^\epsilon$ in Proposition \ref{prop:main}. Note that $\Omega$ is high probability with respect to $\caX$ such that
\begin{itemize}
	\item Lemma \ref{lem:Gaussian_second_moment} holds with $\Omega$,
	\item for the event $\Omega''$, $\Omega \subset \Omega''$, and
	\item the event $\Omega$ is symmetric in the sense that $\Omega$ holds with $(x_1, \dots, x_{i-1}, -x_i, x_{i+1}, \dots, x_N)$ if $\Omega$ holds with $(x_1, \dots, x_{i-1}, x_i, x_{i+1}, \dots, x_N)$ for any $1\le i \le N$.
\end{itemize}
\end{defn}

We now let
\[
	X := \sum_{\gamma \in \caG_L^4} \frac{1}{N^{m^\gamma/2}} \E_\caY \left[ \indi(\Omega) \prod_{k=1}^N y_k^{d^\gamma_k} \right] \left( \prod_{i<j} P_{ij}^{(m^\gamma_{ij})} \right) \left( \prod_{k=1}^N P_{d, kk}^{(m^\gamma_{kk})} \right)
\]
with the event $\Omega$ in Definition \ref{defn:high-probability}. Since $\Omega$ is measurable with respect to $\caY$,
\[
	X - \sum_{\gamma \in \caG_L^4} \frac{1}{N^{m^\gamma/2}} \E_\caY \left[ \prod_{k=1}^N y_k^{d^\gamma_k} \right] \left( \prod_{i<j} P_{ij}^{(m^\gamma_{ij})} \right) \left( \prod_{k=1}^N P_{d, kk}^{(m^\gamma_{kk})} \right) = o_\caW(1).
\]
It is clear that in order to prove Proposition \ref{prop:main}, it suffices to show that $\log X \Rightarrow \caN(-\rho, 2\rho)$.

Lastly, we introduce the following to ease the notation in the analysis of $X$.
\begin{defn} \label{def:prior_factor}
For a multicycle $\gamma$, we define its prior factor restricted on a good high probability event $Y(\gamma)$ by
\[
	Y(\gamma) := \E_\caY \left[ \indi(\Omega) \prod_{k=1}^N y_k^{d^\gamma_k} \right]
\]
and the random variable associated with the graph $\gamma$
\[
	Z(\gamma) := \left( \prod_{i<j} P_{ij}^{(m^\gamma_{ij})} \right) \left( \prod_{k=1}^N P_{d, kk}^{(m^\gamma_{kk})} \right).
\]
\end{defn}
With Definition \ref{def:prior_factor}, we can find
\beq \label{eq:X_SNR}
	X = \sum_{\gamma \in \caG_L^4} \frac{1}{N^{m^\gamma/2}} Y(\gamma) Z(\gamma).
\eeq
We remark that if $\gamma$ and $\gamma'$ do not intersect, i.e., $m_{ij}^{\gamma} m_{ij}^{\gamma'} = 0$ for any $i \leq j$, then $Z(\gamma \cup \gamma') = Z(\gamma) Z(\gamma')$.

\section{Proof of Proposition \ref{prop:main}} \label{sec:main_proof}

In this section, we prove Proposition \ref{prop:main}. 

\subsection{1-multicycle universality} \label{subsec:1-cycle}

In the first step of the proof of Proposition \ref{prop:main}, we focus on the contribution from the $1$-multicycles without self-loops to the sum $X$ in \eqref{eq:X_SNR}. Recall that $\caG_C^1$ is the set of all $1$-multicycles without self-loops. Our goal in this subsection is to prove the following lemma that we will call the $1$-multicycle universality.
\begin{lem} \label{lem:1-cycle_universality}
Define
\beq \label{eq:object_1}
	X_1 := \sum_{\gamma \in \caG_C^1} \frac{1}{N^{m^\gamma/2}} Y(\gamma) Z(\gamma)
\eeq
and
\beq \label{eq:object_R}
	R_1 := \sum_{\gamma \in \caG_C^1} \frac{1}{N^{m^\gamma/2}} Z(\gamma).
\eeq
Then, $\E[(X_1 - R_1)^2] = o(1)$.
\end{lem}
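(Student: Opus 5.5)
We expand the second moment in the orthogonal family $\{Z(\gamma)\}_{\gamma\in\caG_C^1}$. For a $1$-multicycle $\gamma$ without self-loops, $Z(\gamma)=\prod_{(i,j)\in\gamma}P^{(1)}_{ij}$ is a product of independent centered variables over distinct edges. Hence for $\gamma\neq\gamma'$ some edge appears exactly once in $Z(\gamma)Z(\gamma')$, and $\E[Z(\gamma)Z(\gamma')]=0$, while $\E[Z(\gamma)^2]=\F^{m^\gamma}$. Since $Y(\gamma)$ is deterministic,
\[
	\E[(X_1-R_1)^2]=\sum_{\gamma\in\caG_C^1}\frac{\F^{m^\gamma}}{N^{m^\gamma}}\bigl(Y(\gamma)-1\bigr)^2 .
\]
The task is therefore to show this weighted sum of squared deviations of the prior factor from its Rademacher value $1$ is $o(1)$.

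We first split into small and large graphs. Fix a cutoff $K$ (possibly slowly growing, or fixed and then sent to infinity). For $\gamma$ with $m^\gamma\le K$, the degrees satisfy $d^\gamma_k\in\{0,2,4,\dots\}$ with at most $K$ nonzero. On $\Omega\subset\Omega''$ we have $\|\bsx\|=1+O(N^{-1/5})$ (relevant for the normalized case) and independence of the $y_k$. So $Y(\gamma)=\prod_k\E[y^{d_k}]+o(1)$ uniformly over small graphs, which equals $1+o(1)$ when all degrees are $2$ (simple cycle unions, since $\E y^2=1$). It is $O(1)$ otherwise, using that $\p(\Omega^c)=o(1)$ and the moment bounds. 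The number of $\gamma$ of size $m$ with a node of degree $\ge4$ is at most $C_m N^{m-1}$, because such graphs have fewer vertices than edges. Their total contribution is therefore $O(N^{-1})$ per size, while graphs with all degrees $2$ contribute $\sum_m C_m\F^m o(1)$. This gives $o(1)$ for the small part.

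For the large part ($m^\gamma>K$) we bound $(Y-1)^2\le 2Y^2+2$. The term with $2$ gives $\sum_{m>K}\#\{\gamma\}\F^m N^{-m}$. Counting simple-cycle unions as roughly $(1/2)\F^m/m$ per length, this is a tail of the convergent series $-\tfrac12\log(1-\F)$, hence small for large $K$. For the $Y^2$ term we use the replica trick: $Y(\gamma)^2=\E_{\bsy,\bsy'}[\indi(\Omega)\indi(\Omega')\prod_k (y_ky_k')^{d_k}]$. Summing over all $\gamma\in\caG_C^1$ with weights $(\F/N)^{m}$ and reinserting Rademacher signs $v_kv_k'$ reconstructs
\[
	\E_{\bsx,\bsx'}\Bigl[\indi\indi\prod_{i<j}\bigl(1+\F N x_ix_jx_i'x_j'\bigr)\Bigr]\le\E\Bigl[\indi\indi\exp\bigl(\tfrac{N\F}{2}\langle\bsx,\bsx'\rangle^2\bigr)\Bigr]
\]
(up to diagonal corrections controlled on $\Omega''$). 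Lemma \ref{lem:Gaussian_second_moment} evaluates this as $(1-\F)^{-1/2}+o(1)$ because $\F<\SNR_c$. We then compare with the small-graph part of the same expansion, which converges to the same Gaussian value, so the large-graph part of $\sum(\F/N)^mY^2$ is $o(1)+\epsilon_K$.

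The main obstacle is this last step. We must split off the tail from the generating function $\E\exp(\frac{N\F}{2}\langle\bsx,\bsx'\rangle^2)$ cleanly, justify that $\prod(1+t)$ versus $e^{t}$ and the diagonal $i=j$ terms only produce $1+o(1)$ factors on $\Omega^\epsilon_\caX$ and $\Omega''$, and check that all terms are nonnegative so the truncation argument is valid. We would handle this by comparing the $\sum_{m\le K}$ truncations of both sides termwise, using the small-graph computation above.
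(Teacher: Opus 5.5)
Your orthogonal reduction to $\sum_{\gamma\in\caG_C^1}(\F/N)^{m^\gamma}(Y(\gamma)-1)^2$ and your handling of graphs of bounded size are fine. The gap is in the tail of $\sum_{\gamma}(\F/N)^{m^\gamma}Y(\gamma)^2$. The corrections you expect to be $1+o(1)$ are actually a fixed constant factor. Write $t_{ij}=\F N x_ix_jx_i'x_j'$. Then $\frac{N\F}{2}\langle\bsx,\bsx'\rangle^2=\sum_{i<j}t_{ij}+\frac{\F}{2N}\sum_k (y_ky_k')^2$ and $\frac12\sum_{i<j}t_{ij}^2=\frac{\F^2}{4N^2}\sum_{i\neq j}(y_iy_jy_i'y_j')^2$. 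Both of these extra quantities are of order one and concentrate, so with high probability under the pair $(\bsx,\bsx')$,
\[
\prod_{i<j}(1+t_{ij})=e^{-\F/2-\F^2/4+o(1)}\exp\Bigl(\frac{N\F}{2}\langle\bsx,\bsx'\rangle^2\Bigr).
\]
Hence the replica sum over $\caG_C^1$ tends to $(1-\F)^{-1/2}e^{-\F/2-\F^2/4}$. This is also the limit of the Rademacher sum $\sum_{\gamma\in\caG_C^1}(\F/N)^{m^\gamma}$, namely $\exp(\sum_{k\ge3}\F^k/(2k))$. It is not a tail of $-\frac12\log(1-\F)$, because $1$-multicycles include unions of cycles; the right tail tool is Lemma \ref{lem:ALR_cutoff}. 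Your small-graph part therefore also approaches this value as $K\to\infty$. Your upper bound by $\exp(\frac{N\F}{2}\langle\bsx,\bsx'\rangle^2)$ together with Lemma \ref{lem:Gaussian_second_moment} gives $(1-\F)^{-1/2}$ instead. Subtracting the small part then only bounds the large-graph part by $(1-\F)^{-1/2}(1-e^{-\F/2-\F^2/4})+\epsilon_K+o(1)$, which is a fixed positive constant. So the comparison step fails as written.

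To repair it, you must evaluate the replica sum exactly rather than bound it. On $\Omega_\caX^\epsilon$ you have $|t_{ij}|\le \F N^{-1+4\epsilon}$. So the display above holds on the event where $\frac1N\sum_k (y_ky_k')^2=1+o(1)$ and $N^{-2}\sum_k (y_ky_k')^4=o(1)$. On the complementary event, use $\prod(1+t_{ij})\le\exp(\frac{N\F}{2}\langle\bsx,\bsx'\rangle^2)$. That event is then negligible by H\"older's inequality and the $L^{1+\delta}$ bound from Assumption \ref{assump:second_moment}, exactly as in the proof of Lemma \ref{lem:Gaussian_second_moment}. With this, the full sum and the small-graph part have the same limit, and positivity gives the tail bound.

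The paper avoids the correction factor differently, in Lemma \ref{lem:Rademacher_difference_positive}. It embeds $\caG_C^1$ into the full $4$-multicycle expansion of the GOE likelihood ratio. There, single self-loops (diagonal variance $2$) and double edges restore exactly the factors $e^{\F/2}$ and $e^{\F^2/4}$. As a result, both the general-prior and the Rademacher second moments equal $(1-\SNR)^{-1/2}+o(1)$. The paper then splits by $Y(\gamma)\le 1$ versus $Y(\gamma)>1$ and uses positivity of the $Y(\gamma)>1$ terms.
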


Before proving Lemma \ref{lem:1-cycle_universality}, we prove a result on the monotonicity of the variance of $X_1$ with respect to $\F$, which will be used later.
\begin{lem} \label{lem:monotonic_variance}
The variance $\E[X_1^2]$ depends only on $\F$ and $\caY$. If we consider $\E[X_1^2]$ as a function of $\F$, $V(\F) = \E[X_1^2]$, then for any $0 < \alpha_1 \leq \alpha_2$, we have $V(\alpha_1) \leq V(\alpha_2)$.
\end{lem}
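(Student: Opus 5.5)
We compute $\E[X_1^2]$ directly from the definition \eqref{eq:object_1}. Since each $\gamma\in\caG_C^1$ has only single edges and no self-loops, $Z(\gamma)=\prod_{(i,j)\in\gamma} P^{(1)}_{ij}$, and $Y(\gamma)$ is deterministic (it is an $\caX$-expectation). By independence of the $P^{(1)}_{ij}$ across pairs and $\E[P^{(1)}_{ij}]=0$, we have $\E[Z(\gamma)Z(\gamma')]=0$ unless every edge of $\gamma$ appears in $\gamma'$ and vice versa. For $1$-multicycles this means $\gamma=\gamma'$. When $\gamma=\gamma'$,
\[
	\E[Z(\gamma)^2]=\prod_{(i,j)\in\gamma}\E[(P^{(1)}_{ij})^2]=\F^{\,m^\gamma},
\]
since for $1$-multicycles the number of edges equals $m^\gamma=n^\gamma=n_1^\gamma$. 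Hence
\[
	\E[X_1^2]=\sum_{\gamma\in\caG_C^1}\frac{\F^{\,m^\gamma}}{N^{m^\gamma}}\,Y(\gamma)^2
	=\sum_{m\ge 0}\Big(\frac{\F}{N}\Big)^{m}\sum_{\gamma\in\caG_C^1,\;m^\gamma=m}Y(\gamma)^2 .
\]
This shows the first claim: the variance depends only on $\F$ (through the powers $\F^m$) and on the prior factors $Y(\gamma)$, which are determined by $\caY$ and the event $\Omega$ (itself defined in terms of $\bsx$, hence $\caY$).

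For monotonicity, observe that $V(\F)$ is a power series in $\F$ with nonnegative coefficients $c_m=N^{-m}\sum_{m^\gamma=m}Y(\gamma)^2\ge 0$. Since $\caG_C^1$ is a finite set for fixed $N$ (at most $2^{\binom N2}$ graphs), the sum is a polynomial, so there are no convergence issues. A polynomial with nonnegative coefficients is nondecreasing on $(0,\infty)$, so $V(\alpha_1)\le V(\alpha_2)$ whenever $0<\alpha_1\le\alpha_2$. Here the dependence on $\F$ is understood with $\caY$ held fixed, varying only the variance of the $P^{(1)}_{ij}$.

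The only point needing care is the orthogonality step $\E[Z(\gamma)Z(\gamma')]=\delta_{\gamma\gamma'}\F^{m^\gamma}$. This follows because each pair $\{i,j\}$ carries at most one copy of $P^{(1)}_{ij}$ per graph. In the product $Z(\gamma)Z(\gamma')$, any edge lying in exactly one of the two graphs therefore contributes a lone centered factor, independent of all other factors, which kills the expectation. No other ingredient, and in particular no use of Assumption \ref{assump:second_moment}, is needed, so I expect no real obstacle.
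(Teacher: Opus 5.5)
Your proof is correct and follows the paper's argument. Distinct $1$-multicycles are orthogonal because some edge contributes a lone centered factor $P^{(1)}_{ab}$, which gives $\E[X_1^2]=\sum_{\gamma\in\caG_C^1}(\F/N)^{m^\gamma}Y(\gamma)^2$, a polynomial in $\F$ with nonnegative coefficients and hence nondecreasing; your remark that $\caG_C^1$ is finite for fixed $N$, so there is no convergence issue, is a small point the paper leaves implicit.
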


\begin{proof}
We first expand $\E[X_1^2]$ as
\beq \begin{split} \label{eq:distinct_1-cycle}
	\E[X_1^2] 
	= \sum_{\gamma, \gamma' \in \caG_C^1} \frac{1}{N^{(m^\gamma + m^{\gamma'})/2}} Y(\gamma) Y(\gamma') \E \left[ Z(\gamma) Z(\gamma') \right].
\end{split} \eeq
We expect that the summand in the right side of \eqref{eq:distinct_1-cycle} vanishes unless the multicycles $\gamma$ and $\gamma'$ are the same. To consider such cases systematically, we introduce the concept of the orthogonality.
\begin{defn}[orthogonal multicycles] \label{def:orthogonal}
We say that two multicycles $\gamma$ and $\gamma'$ are orthogonal if
\beq \label{eq:orthogonal}
	\E[ Z(\gamma) Z(\gamma')] = \E \left[ \prod_{i<j} P_{ij}^{(m^\gamma_{ij})} \prod_{i'<j'} P_{i'j'}^{(m^{\gamma'}_{i'j'})} \right] = 0.
\eeq
\end{defn}
Note that the orthogonality depends not only on the multicycles $\gamma$ and $\gamma'$ but also $P_{ij}^{(n)}$.

Suppose that $\gamma$ and $\gamma'$ in $\caG_C^1$ are distinct. If we assume without loss of generality that $m^\gamma_{ab} = 1$ but $m^{\gamma'}_{ab} = 0$ for some nodes $a$ and $b$, we can pull out the factor $\E[P_{ab}^{(1)}]$ from the last term of the multiplicand in the right side of \eqref{eq:orthogonal}, which vanishes. Thus, distinct $1$-multicycles are orthogonal and we find that
\[
	\E[X_1^2] 
	= \sum_{\gamma \in \caG_C^1} \frac{1}{N^{m^\gamma}} Y(\gamma)^2 \E \left[ Z(\gamma)^2 \right] = \sum_{\gamma \in \caG_C^1} \left( \frac{\F}{N} \right)^{m^\gamma} Y(\gamma)^2,
\]
which shows that $\E[X_1^2]$ depends only on $\F$ and $\caY$, and also $\E[X_1^2]$ is an increasing function of $\F$.
\end{proof}

We now return to the proof of Lemma \ref{lem:1-cycle_universality}. 

\begin{proof}[Proof of Lemma \ref{lem:1-cycle_universality}]
Adapting the proof of Lemma \ref{lem:monotonic_variance}, we find
\[
	\E[(X_1 - R_1)^2] = \E \left[ \left( \sum_{\gamma \in \caG_C^1} \frac{1}{N^{m^\gamma/2}} (Y(\gamma)-1) Z(\gamma) \right)^2 \right] = \sum_{\gamma \in \caG_C^1} \left( \frac{\F}{N} \right)^{m^\gamma} (Y(\gamma)-1)^2.
\]
We further decompose it as
\beq \begin{split} \label{eq:simple_cycle_decomposition}
	\E[(X_1 - R_1)^2]&= \sum_{\gamma \in \caG_C^1, Y(\gamma) \leq 1} \left( \frac{\F}{N} \right)^{m^\gamma} \left( Y(\gamma) -1 \right)^2 + \sum_{\gamma \in \caG_C^1, Y(\gamma) > 1} \left( \frac{\F}{N} \right)^{m^\gamma} \left( Y(\gamma) -1 \right)^2 \\
	&\le \sum_{\gamma \in \caG_C^1, Y(\gamma) \leq 1} \left( \frac{\F}{N} \right)^{m^\gamma} \left(1- Y(\gamma)^2 \right) + \sum_{\gamma \in \caG_C^1, Y(\gamma) > 1} \left( \frac{\F}{N} \right)^{m^\gamma} \left( Y(\gamma)^2 -1 \right)
\end{split} \eeq
To estimate the first term in the right side of \eqref{eq:simple_cycle_decomposition}, we consider the cutoff property for Rademacher prior, which is a slightly modified version of Lemma 3.3 in \cite{AizenmanLebowitzRuelle}. 
\begin{lem}[1-multicycle cutoff property for Rademacher prior] \label{lem:ALR_cutoff}
If $\alpha < 1$, then there exists a constant $s_1$, independent of any other quantities, such that for any $s > s_1$
\[
	\sum_{\gamma \in \caG_C^1, n^\gamma \geq s} \left( \frac{\alpha}{N} \right)^{m^{\gamma}} \leq s \alpha^s.
\]
\end{lem}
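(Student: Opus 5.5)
Every $\gamma\in\caG_C^1$ is a simple graph on $[[1,N]]$ without self-loops in which every vertex has even degree. Hence it decomposes into edge-disjoint simple cycles. I will bound the weighted sum by enumerating graphs through such decompositions. The basic count is that the number of simple cycles of length $k\ge 3$ in $K_N$ is at most $N^k/(2k)$. So a single cycle of length $k$ contributes at most $\alpha^k/(2k)$ to $\sum (\alpha/N)^{m^\gamma}$. Here $m^\gamma=n^\gamma$, since all edges are single.

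First, I will handle the generating function without the restriction $n^\gamma\ge s$. Decompose each $\gamma$ into edge-disjoint cycles $C_1,\dots,C_r$ of lengths $k_1,\dots,k_r$, and overcount $\gamma$ by the unordered multiset of its cycles. This gives
\[
\sum_{\gamma\in\caG_C^1}\Big(\frac{\alpha}{N}\Big)^{m^\gamma}\le \sum_{r\ge 0}\frac{1}{r!}\Big(\sum_{k\ge 3}\frac{\alpha^k}{2k}\Big)^r=\exp\Big(\sum_{k\ge3}\frac{\alpha^k}{2k}\Big)\le (1-\alpha)^{-1/2}.
\]
This is the ALR-type bound, and it is finite precisely because $\alpha<1$.

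Second, for the tail I will use an exponential tilt. For any $t\in(1,1/\alpha)$, Markov's trick gives
\[
\sum_{n^\gamma\ge s}\Big(\frac{\alpha}{N}\Big)^{m^\gamma}\le t^{-s}\sum_{\gamma}\Big(\frac{t\alpha}{N}\Big)^{m^\gamma}\le t^{-s}(1-t\alpha)^{-1/2}.
\]
This already gives geometric decay, but only at rate $t^{-s}$, whereas the target is $s\alpha^s$. To sharpen it, I will do the tail count directly on the decomposition. The constraint $\sum k_i\ge s$ with weights $\prod \alpha^{k_i}/(2k_i)$ and the $1/r!$ factor gives
\[
\sum_{K\ge s}\alpha^K c_K,
\]
where $c_K$ is the coefficient of $z^K$ in $\exp\big(\sum_{k\ge3} z^k/(2k)\big)$. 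Since this series is $(1-z)^{-1/2}$ times an entire factor, we have $c_K\le C K^{-1/2}\le 1$ for large $K$. Therefore
\[
\sum_{K\ge s}\alpha^K c_K\le \frac{\alpha^s}{1-\alpha}\le s\alpha^s
\]
once $s>s_1:=1/(1-\alpha)$.

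The main obstacle is that a graph can have many cycle decompositions, and the paper's phrase "independent of any other quantities" must be reconciled with $s_1$ depending on $\alpha$. Multiple decompositions only cause overcounting, which is harmless for an upper bound. The coefficient bound on $c_K$ is the delicate point. If $c_K$ is only shown to be bounded by a constant $C$, the tail is $C\alpha^s/(1-\alpha)$, and then I take $s_1$ so large that $C/(1-\alpha)\le s$, which again suffices.
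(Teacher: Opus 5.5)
Your proof is correct. It shares its first half with the paper's proof: both decompose $\gamma\in\caG_C^1$ into edge-disjoint simple cycles (you cite the even-degree decomposition, while the paper builds one greedily). Both then dominate the sum over $\gamma$ by a sum over sets of simple cycles, using that there are at most $N^k/(2k)$ cycles of length $k$.

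The difference is how the constraint $n^\gamma\ge s$ is handled. The paper follows Lemma 3.3 of Aizenman--Lebowitz--Ruelle and uses a single $s$-dependent exponential tilt, $e^{\delta}\alpha=1-\frac{\log s}{2s}$. It evaluates $\exp(\sum_{k\ge3}x^k/(2k))=(1-x)^{-1/2}e^{-x/2-x^2/4}$ at $x=e^\delta\alpha$, which gives $s\alpha^s$ times a factor decaying like a negative power of $\log s$. This optimized tilt is exactly what improves on your fixed-$t$ attempt.

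You instead keep the length constraint exactly and reduce the tail to $\sum_{K\ge s}\alpha^K c_K$, where $c_K=[z^K]\exp(\sum_{k\ge3}z^k/(2k))$. This yields the cleaner bound $\alpha^s/(1-\alpha)$, which is stronger for fixed $\alpha$, with the explicit threshold $s_1=1/(1-\alpha)$.

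The coefficient bound does not need singularity analysis. Since $z/2+z^2/4$ has nonnegative coefficients, $c_K\le[z^K](1-z)^{-1/2}=\binom{2K}{K}4^{-K}\le 1$ for every $K$, so your fallback with a constant $C$ is unnecessary.

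Your remark about $s_1$ is also right. The paper's argument silently needs $\delta\ge 0$, i.e.\ $\frac{\log s}{2s}\le 1-\alpha$, so its $s_1$ depends on $\alpha$ as well. Some such dependence is unavoidable: as $N\to\infty$ the left side tends to $\sum_{K\ge s}\alpha^K c_K$ with $c_K\asymp K^{-1/2}$, which blows up as $\alpha\uparrow 1$ while $s\alpha^s\to s$.
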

We prove Lemma \ref{lem:ALR_cutoff} in Section \ref{sec:proofs}.
By Lemma \ref{lem:ALR_cutoff}, we notice that for any fixed $\epsilon > 0$, there exists ($N$-independent) constant $K_{\epsilon} \equiv K_\epsilon(\F)$ such that for any $s \geq K_\epsilon$ and for any sufficiently large $N$,
\beq \label{eq:Rademacher_cutoff}
	\sum_{\gamma \in \caG_C^1, n^\gamma \geq s} \left( \frac{\F}{N} \right)^{m^{\gamma}} <\epsilon.
\eeq
Since the summand in the left side of \eqref{eq:Rademacher_cutoff} is positive,
\[ \begin{split}
	\sum_{\gamma \in \caG_C^1, Y(\gamma) \leq 1, n^\gamma \geq K_\epsilon} \left( \frac{\F}{N} \right)^{m^\gamma} Y(\gamma)^2 \leq \sum_{\gamma \in \caG_C^1, Y(\gamma) \leq 1, n^\gamma \geq K_\epsilon} \left( \frac{\F}{N} \right)^{m^\gamma} \leq \sum_{\gamma \in \caG_C^1, n^\gamma \geq K_\epsilon} \left( \frac{\F}{N} \right)^{n^{\gamma}} <\epsilon,
\end{split} \]
and in particular,
\beq \label{eq:normalized_1-cycle_first_term_1}
	\sum_{\gamma \in \caG_C^1, Y(\gamma) \leq 1, n^\gamma \geq K_\epsilon} \left( \frac{\F}{N} \right)^{n^{\gamma}} \left( 1 - Y(\gamma)^2 \right) <\epsilon.
\eeq

To control the sum in the case $Y(\gamma) \leq 1, n^\gamma < K_{\epsilon}$, we apply the following lemma:
\begin{lem} \label{lem:Y_gamma_dichotomy}
Assume that $\caX$ is not a Rademacher prior. Fix an ($N$-independent) constant $s$. Then, for any multicycle $\gamma$ with $n^\gamma<s$,
\begin{itemize}
	\item if all nodes of $\gamma$ are with degree $2$, then $Y(\gamma) \leq 1$ and $1- Y(\gamma) = o(1)$, and
	\item if at least one node of $\gamma$ is with degree $4$ or higher, then $Y(\gamma) > 1$ for any sufficiently large $N$.
\end{itemize}
\end{lem}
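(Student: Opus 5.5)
The plan is to compute $Y(\gamma)$ explicitly up to $o(1)$ errors using independence of the $y_k$, and then deal with the indicator $\indi(\Omega)$ separately. Recall that $y_k = \sqrt{N}|x_k|$ in the i.i.d. case, or $y_k = \sqrt{N}|x_k|/\|\bsx\|$ in the normalized case. A multicycle $\gamma$ with $n^\gamma < s$ touches at most $2s$ nodes, a bounded number, and each degree $d^\gamma_k$ is at most $8s$ (the maximal multiplicity is $4$, self-loops included).

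First consider the i.i.d. case without the indicator. Independence gives $\E_\caY[\prod_k y_k^{d_k^\gamma}] = \prod_{k \in V(\gamma)} \mu_{d_k^\gamma}$, where $\mu_d := \E[(\sqrt{N}|x_1|)^d]$ does not depend on $N$ and $\mu_2 = 1$. If every degree equals $2$, the product is $1$. If some degree $d \geq 4$, we have $\mu_d \geq \mu_2^{d/2} = 1$ by Jensen, with equality iff $(\sqrt N x_1)^2$ is a.s. constant, i.e. iff the prior is Rademacher. So in the non-Rademacher case each such factor is at least $\mu_4 > 1$, and the product is at least a fixed constant $c(s) > 1$. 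This gap, uniform over the finitely many degree patterns with $n^\gamma<s$, is what the second bullet needs.

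Next account for $\indi(\Omega)$. Since $\indi(\Omega) \leq 1$ and every $y_k \ge 0$, the degree-$2$ case immediately gives $Y(\gamma) \leq 1$. For the lower bounds, write $\E_\caY[\indi(\Omega^c)\prod_k y_k^{d_k}]$ and apply Cauchy--Schwarz: it is at most $\p(\Omega^c)^{1/2}\,\E[\prod_k y_k^{2d_k}]^{1/2}$. The second factor is $O(1)$ because of the moment bounds $C^x_s$ and the bounded number of nodes and degrees. Hence $Y(\gamma) = \prod_k \mu_{d_k} + o(1)$, which gives $1 - Y(\gamma) = o(1)$ in the first case and $Y(\gamma) \geq c(s) - o(1) > 1$ for large $N$ in the second.

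The normalized case $x_i/\|\bsx\|$ is the main obstacle: there the $y_k$ are no longer independent, and the upper bound $Y\le1$ for degree-$2$ cycles is not automatic. I would use the fact that $\Omega \subset \Omega''$ forces $|\|\bsx\|-1| \le N^{-1/5}$. On $\Omega$ we therefore have $\prod_k y_k^{d_k} = (1+O(N^{-1/5}))\prod_k (\sqrt N|x_k|)^{d_k}$, and the lower-bound statements follow exactly as above. For $Y(\gamma)\le 1$ with all degrees $2$, a multiplicative $1+O(N^{-1/5})$ error is not sufficient. Instead I would use exchangeability: on the symmetric event $\Omega$, with a cycle on $r \le 2s$ distinct nodes, $\E[\indi(\Omega)\prod_{k\in S} x_k^2/\|\bsx\|^2]$ does not depend on $S$. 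Averaging over $r$-subsets $S$, we have $\sum_{|S|=r}\prod_{k\in S} x_k^2 \le \|\bsx\|^{2r}/r!$, which bounds the average by $\binom{N}{r}^{-1}N^r/r!\,\p(\Omega) \le 1$. This would give $Y(\gamma)\le1$; I would check this elementary-symmetric-polynomial inequality carefully, since it is the delicate step.
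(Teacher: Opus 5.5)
There is a genuine gap, at exactly the step you flagged. Everything else works. Your i.i.d. computation, the Jensen gap $\mu_d\ge\mu_4>1$, and the Cauchy--Schwarz control of $\indi(\Omega^c)$ (which justifies the ``$(1+o(1))$'' the paper asserts without comment) are fine. So is the comparison $\prod_k y_k^{d_k}=(1+O(N^{-1/5}))\prod_k(\sqrt N|x_k|)^{d_k}$ on $\Omega$. Together these give the second bullet in both cases and $|1-Y(\gamma)|=o(1)$ for degree-$2$ cycles.

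In the normalized case, however, the inequality $r!\,e_r(a)\le(\sum_k a_k)^r$ ($e_r$ the $r$-th elementary symmetric polynomial) loses exactly the factor that matters. Your chain only yields
\[
Y(\gamma)\le \frac{N^r}{r!\binom{N}{r}}\,\p(\Omega)=\frac{N^r}{N(N-1)\cdots(N-r+1)}\,\p(\Omega)=\Bigl(1+\frac{r(r-1)}{2N}+O(N^{-2})\Bigr)\p(\Omega).
\]
The prefactor is strictly larger than $1$ for every $r\ge2$, and a cycle without self-loops has $r\ge3$. Since $\p(\Omega^c)$ is only known to be $o(1)$, it can be much smaller than $N^{-1}$. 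For instance, for a sub-Gaussian prior one may take $\Omega_N$ to be the whole space, and then $\Omega$ holds with overwhelming probability. So the final ``$\le 1$'' in your last paragraph is false as written, and $Y(\gamma)\le 1$ is not established.

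The missing ingredient is the sharp symmetric-function bound, Maclaurin's inequality $e_r(a)/\binom{N}{r}\le\bigl(e_1(a)/N\bigr)^r$ for $a_1,\dots,a_N\ge0$. First drop $\indi(\Omega)$, which is allowed since $y_k\ge0$. Then you only need exchangeability of the unrestricted i.i.d. law, not permutation invariance of $\Omega$. This matters because the symmetry recorded in Definition~\ref{defn:high-probability} is under sign flips only, and $\Omega_N$ from Assumption~\ref{assump:second_moment} is arbitrary. With $a_k=y_k^2$ and $\sum_k y_k^2=N\,\indi(\|\bsx\|>0)\le N$ you get
\[
Y(\gamma)\le\E_\caY\Bigl[\prod_{k\in S}y_k^2\Bigr]=\binom{N}{r}^{-1}\E_\caY\bigl[e_r(y_1^2,\dots,y_N^2)\bigr]\le1 .
\]

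The paper gets $Y(\gamma)\le1$ differently, by induction on the number of nodes. It writes $N\,\E_\caY[\indi(\Omega)y_1^2\cdots y_r^2]=\E_\caY[\indi(\Omega)y_1^2\cdots y_r^2\sum_i y_i^2]$ and splits off the diagonal term $r\,\E_\caY[\indi(\Omega)y_1^4y_2^2\cdots y_r^2]$. It then uses the strict inequality $\E_\caY[\indi(\Omega)y_1^4y_2^2\cdots y_r^2]>1$ from the second bullet, hence the non-Rademacher hypothesis, to show the $(r+1)$-node term does not exceed the $r$-node term. The Maclaurin route is shorter and needs neither the second bullet nor the non-Rademacher assumption for this part; it is an equality for the Rademacher prior.
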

We prove Lemma \ref{lem:Y_gamma_dichotomy} in Section \ref{sec:proofs}. From Lemma \ref{lem:Y_gamma_dichotomy}, we find that
\beq \label{eq:normalized_1-cycle_first_term_2}
	\sum_{\gamma \in \caG_C^1, Y(\gamma) \leq 1, n^\gamma < K_{\epsilon}} \left( \frac{\F}{N} \right)^{n^{\gamma}} \left( 1 - Y(\gamma)^2 \right) = o(1),
\eeq
since the number of $1$-multicycles with the length $n^{\gamma}$ whose all nodes are with degree $2$ is less than $N^{n^{\gamma}}$.

To estimate the second term in the right side of \eqref{eq:simple_cycle_decomposition}, we apply the following lemma:
\begin{lem} \label{lem:Rademacher_difference_positive}
If $\F <\SNR_c$,
\beq \label{eq:4-cycle_cutoff}
	\sum_{\gamma \in \caG_C^1, Y(\gamma)>1} \left( \frac{\F}{N} \right)^{n^{\gamma}} (Y(\gamma)^2-1) = o(1).
\eeq
\end{lem}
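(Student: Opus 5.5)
The plan is to avoid estimating $Y(\gamma)$ for large $\gamma$ one graph at a time. Instead I will compute the \emph{total} weighted sum $\sum_{\gamma\in\caG_C^1}(\F/N)^{m^\gamma}Y(\gamma)^2$ exactly, as a second-moment quantity. I will then compare it with the Rademacher total $\sum_{\gamma\in\caG_C^1}(\F/N)^{m^\gamma}$ and subtract the part with $Y(\gamma)\le 1$, which is already controlled. Note that for $1$-multicycles $n^\gamma=m^\gamma$. Hence
\[
\sum_{Y(\gamma)>1}\Big(\frac{\F}{N}\Big)^{m^\gamma}(Y(\gamma)^2-1)=\Big[\sum_{\gamma}\Big(\frac{\F}{N}\Big)^{m^\gamma}Y(\gamma)^2-\sum_{\gamma}\Big(\frac{\F}{N}\Big)^{m^\gamma}\Big]+\sum_{Y(\gamma)\le 1}\Big(\frac{\F}{N}\Big)^{m^\gamma}(1-Y(\gamma)^2),
\]
with all sums over $\caG_C^1$. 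By \eqref{eq:normalized_1-cycle_first_term_1} and \eqref{eq:normalized_1-cycle_first_term_2}, the last sum is at most $\epsilon+o(1)$ for arbitrary $\epsilon$. It therefore suffices to show that the bracket is $o(1)$.

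\emph{Step 1 (second-moment representation).} Write $Y(\gamma)^2=\E_{\bsy,\bsy'}[\indi(\Omega)\indi(\Omega')\prod_k (y_ky'_k)^{d_k^\gamma}]$ with independent copies $\bsy,\bsy'$. Reintroduce independent Rademacher signs $v_k$ and set $u_k=v_ky_ky'_k$. Averaging over the signs kills every graph with an odd-degree node, and graphs with self-loops do not occur here. This gives
\[
\sum_{\gamma\in\caG_C^1}\Big(\frac{\F}{N}\Big)^{m^\gamma}Y(\gamma)^2=\E_{\bsx,\bsx'}\Big[\indi(\Omega(\bsx))\indi(\Omega(\bsx'))\prod_{i<j}\big(1+\F N x_ix_jx'_ix'_j\big)\Big],
\]
with $\bsx=\bsv\bsy/\sqrt N$; in the normalized case \eqref{eq:convergence_normalized}, $\bsx$ is replaced by $\bsx/\|\bsx\|$. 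The Rademacher total has the same form with $\sqrt N x_k=\pm1$, and by the classical computation of \cite{AizenmanLebowitzRuelle} it equals $(1-\F)^{-1/2}e^{-\F/2-\F^2/4}+o(1)$.

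\emph{Step 2 (expanding the logarithm).} Put $a_{ij}=\F Nx_ix_jx'_ix'_j$. On $\Omega$ we have $\max_k|x_k|\le N^{-1/2+\epsilon}$, so $|a_{ij}|\le N^{-1+4\epsilon}$. Consequently
\[
\sum_{i<j}\log(1+a_{ij})=\sum_{i<j} a_{ij}-\tfrac12\sum_{i<j} a_{ij}^2+O(N^{-1+12\epsilon}).
\]
For the two sums:
\begin{itemize}
\item The first is $\sum_{i<j}a_{ij}=\frac{\F N}{2}\langle\bsx,\bsx'\rangle^2-\frac{\F N}{2}\sum_i x_i^2x_i'^2$.
\item The second is $\tfrac12\sum_{i<j}a_{ij}^2=\frac{\F^2}{4}\big(N\sum_ix_i^2x_i'^2\big)^2+o(1)$.
\end{itemize}
By the law of large numbers for independent $\bsx,\bsx'$, together with the bounds on $\|\bsx\|$ and $N\sum x_i^4$ in $\Omega''$, we get $N\sum_i x_i^2x_i'^2\to1$ in probability. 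So the integrand equals $\exp(\F N\langle\bsx,\bsx'\rangle^2/2)\cdot e^{-\F/2-\F^2/4}\cdot(1+o_\caX(1))$, and it is dominated by $\exp(\F N\langle\bsx,\bsx'\rangle^2/2)$, since $\log(1+a)\le a$ and $-\sum_i x_i^2x_i'^2\le 0$.

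\emph{Step 3 (passing to the limit).} The main obstacle is to justify replacing the $o_\caX(1)$ factor by $1$ inside the expectation, because the dominating function is exponentially large on rare events. I will use uniform integrability: pick $\F<\SNR<\SNR_c$. Lemma \ref{lem:Gaussian_second_moment} gives $\E[\indi\indi\exp(\SNR N\langle\bsx,\bsx'\rangle^2/2)]=O(1)$, and by Hölder with exponent $\SNR/\F>1$ the family $\indi\indi\exp(\F N\langle\bsx,\bsx'\rangle^2/2)$ is uniformly integrable. Convergence in probability of the correction factors then yields
\[
\E\Big[\indi\indi\prod_{i<j}(1+a_{ij})\Big]=e^{-\F/2-\F^2/4}\,\E\Big[\indi\indi e^{\F N\langle\bsx,\bsx'\rangle^2/2}\Big]+o(1).
\]
By \eqref{eq:Gaussian_second_moment_1}, the right side is $e^{-\F/2-\F^2/4}(1-\F)^{-1/2}+o(1)$; in the normalized case one uses \eqref{eq:Gaussian_second_moment_2} instead. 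This matches the Rademacher total from Step 1, so the bracket is $o(1)$. Letting $\epsilon\to0$ in the $Y(\gamma)\le1$ part then proves \eqref{eq:4-cycle_cutoff}.
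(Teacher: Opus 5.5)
Your proof is correct, but it takes a genuinely different route from the paper's.

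The paper specializes to GOE disorder. There the coefficients $p^{(n)}_{ij}$ are Hermite polynomials, so distinct multicycles in $\caG_L^4$ are orthogonal. The paper then:
\begin{itemize}
\item identifies the conditioned second moment of the GOE likelihood ratio with $\sum_{\gamma\in\caG_L^4}N^{-m^\gamma}Y(\gamma)^2\E[Z(\gamma)^2]$;
\item uses Lemma \ref{lem:Gaussian_second_moment} to get $(1-\SNR)^{-1/2}+o(1)$ for both the given prior and the Rademacher prior, and subtracts;
\item disposes of the $Y(\gamma)\le 1$ part over all of $\caG_L^4$ with the extra cutoff Lemma \ref{lem:4-cycle_Rademacher_cutoff} together with Lemma \ref{lem:Y_gamma_dichotomy};
\item only at the end restricts to $\caG_C^1$ by positivity.
\end{itemize}
You never leave $\caG_C^1$. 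You use the exact representation $\E[\indi(\Omega(\bsx))\indi(\Omega(\bsx'))\prod_{i<j}(1+\F N x_ix_jx_i'x_j')]$ of the $1$-multicycle generating function. Its asymptotics come from a deterministic expansion of the logarithm on $\Omega$, the law of large numbers for $N\sum_i x_i^2x_i'^2$, and uniform integrability from Assumption \ref{assump:second_moment}. The $Y(\gamma)\le 1$ part is exactly what \eqref{eq:normalized_1-cycle_first_term_1} and \eqref{eq:normalized_1-cycle_first_term_2} already control.

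What your route buys is a shorter, self-contained argument. It needs no GOE/Hermite detour and no Lemma \ref{lem:4-cycle_Rademacher_cutoff}. It also avoids relating the second moment of the full (infinite Hermite series) Gaussian likelihood ratio to its $\caG_L^4$ truncation, which the paper justifies only by pointing to the proof of Theorem \ref{thm:log_LR}. The price is bookkeeping. You must track the factor $e^{-\F/2-\F^2/4}$ coming from the removed diagonal terms and the quadratic term of the logarithm, and check that it matches the Rademacher constant. In the paper this is absorbed automatically, because double edges and self-loops restore the full $(1-\SNR)^{-1/2}$.

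Three points you use implicitly deserve a line each:
\begin{itemize}
\item You need \eqref{eq:Gaussian_second_moment_1}--\eqref{eq:Gaussian_second_moment_2} with $\Omega$ in place of $\Omega_N'$. This follows from your own uniform-integrability argument, since $\p(\Omega^c)=o(1)$.
\item Step 1 relies on the sign-symmetry of $\Omega$ from Definition \ref{defn:high-probability}.
\item You need $\F<1$, both for the Rademacher limit and for Lemma \ref{lem:ALR_cutoff} behind \eqref{eq:normalized_1-cycle_first_term_1}. This is automatic: the CLT for $\sqrt N\langle \bsx,\bsx'\rangle$ and Fatou's lemma show that \eqref{eq:second_moment_formula} forces $\SNR_c<1$.
\end{itemize}
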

We prove Lemma \ref{lem:Rademacher_difference_positive} in Section \ref{sec:proofs}. Combining \eqref{eq:normalized_1-cycle_first_term_1}, \eqref{eq:normalized_1-cycle_first_term_2}, and \eqref{eq:4-cycle_cutoff}, we conclude that Lemma \ref{lem:1-cycle_universality} holds.
\end{proof}

\subsection{2-multicycle universality} \label{subsec:2-cycle}

As the second step, we prove the following estimate on $2$-multicycles without self-loops, which extends Lemma \ref{lem:1-cycle_universality}. 
\begin{lem} \label{lem:2-cycle_universality}
Recall the definition of $Y$ in Definition \ref{def:prior_factor}. Define
\beq \label{eq:object_2}
	X_2 := \sum_{\gamma \in \caG_C^2} \frac{1}{N^{m^\gamma/2}} Y (\gamma) Z(\gamma)
\eeq
and
\beq \label{eq:object_R2}
	R_2 := \sum_{\gamma \in \caG_C^2} \frac{1}{N^{m^\gamma/2}} Z(\gamma).
\eeq
Then, $\E[(X_2 - R_2)^2] = o(1)$.
\end{lem}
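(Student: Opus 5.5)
We follow the proof of Lemma \ref{lem:1-cycle_universality}, with the orthogonality structure of $2$-multicycles in place of that of $1$-multicycles. First we compute $\E[(X_2-R_2)^2]$. For $\gamma,\gamma'\in\caG_C^2$, the factor $\E[Z(\gamma)Z(\gamma')]$ splits over edges as a product of $\E[P^{(m_{ij}^\gamma)}_{ij}P^{(m_{ij}^{\gamma'})}_{ij}]$. This vanishes if some edge has multiplicity $1$ or $2$ in exactly one of the two graphs, because $\E[P^{(n)}_{ij}]=0$. It also vanishes if the multiplicities on some edge are $(1,2)$ or $(2,1)$, because $\E[P^{(1)}_{ij}P^{(2)}_{ij}]=0$. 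Hence distinct $2$-multicycles are orthogonal in the sense of Definition \ref{def:orthogonal}, and
\[
	\E[(X_2-R_2)^2]=\sum_{\gamma\in\caG_C^2}\frac{\F^{n_1^\gamma}\G^{n_2^\gamma}}{N^{m^\gamma}}\,(Y(\gamma)-1)^2 .
\]
As in \eqref{eq:simple_cycle_decomposition}, we split this sum according to whether $Y(\gamma)\le 1$ or $Y(\gamma)>1$, and bound $(Y-1)^2$ by $1-Y^2$ and $Y^2-1$, respectively.

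\emph{Terms with $Y(\gamma)\le1$.} Write $\gamma$ as its $1$-part $\gamma_1$ (single edges) together with a set of double edges $D$. Then $m^\gamma=n_1^\gamma+2n_2^\gamma$, so each double edge carries weight $\G/N^2$. The number of double edges attached at a given node is at most $N$. Summing over $D$ therefore gives at most $\prod(1+\G/N^2)^{N^2/2}$-type factors restricted to double edges that touch or form closed structures. More precisely, $\sum_D (\G/N^2)^{|D|}$ over all sets of double edges is $e^{O(\G)}$, which is $O(1)$. Combining this with the cutoff Lemma \ref{lem:ALR_cutoff} for $\gamma_1$, we obtain a cutoff: for every $\epsilon>0$ there is $K_\epsilon$ such that the contribution of graphs with $n^\gamma\ge K_\epsilon$ is less than $\epsilon$. (We use that a double edge on its own keeps degrees even, so $D$ ranges freely.) For the remaining graphs with $n^\gamma<K_\epsilon$, Lemma \ref{lem:Y_gamma_dichotomy} applies to multicycles. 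A graph with $Y\le1$ must have all nodes of degree $2$, so it has no double edges at all unless it consists of isolated double edges. In either case $1-Y(\gamma)=o(1)$, and the number of such graphs of length $n$ is at most $N^{n}$ for cycles and $N^{2n}$ for double edges, which is exactly compensated by the weights. The total is therefore $o(1)$.

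\emph{Terms with $Y(\gamma)>1$.} This is the main obstacle; it is the analogue of Lemma \ref{lem:Rademacher_difference_positive}. I would prove it via the identity
\[
	\sum_{\gamma}\frac{\F^{n_1}\G^{n_2}}{N^{m}}Y(\gamma)^2
	=\E_{\bsy,\bsy'}\Bigl[\indi(\Omega)\indi(\Omega')\prod_{i<j}\Bigl(1+\tfrac{\F}{N}y_iy_jy_i'y_j'+\tfrac{\G}{N^2}y_i^2y_j^2y_i'^2y_j'^2\Bigr)\Bigr]
\]
restricted to even degrees; the restriction is implemented by averaging over the Rademacher signs $v,v'$. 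On $\Omega\subset\Omega''$ the double-edge factor equals $\exp(\G(\sum_i y_i^2y_i'^2)^2/(2N^2)+o(1))=e^{\G/2+o(1)}$ deterministically, because $y_i\le N^{\epsilon}$ and $\|\bsy\|^2\approx N$. The single-edge part equals $\exp(N\F\langle\bsx,\bsx'\rangle^2/2-\F m_4^2/(2N)\cdot\ldots)$ up to $1+o(1)$. Its expectation is controlled by Lemma \ref{lem:Gaussian_second_moment}, since $\F<\SNR_c$, giving $(1-\F)^{-1/2}e^{-\F/2-\F^2/4}\cdot e^{O(1)}+o(1)$-type limits. The same expression is obtained with the Rademacher prior, i.e.\ with all $Y$ replaced by $1$. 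Thus $\sum(\cdots)(Y^2-1)=o(1)$ over all $\gamma$. Combined with the previous paragraph, which shows that the part with $Y\le1$ of $\sum(\cdots)(1-Y^2)$ is $o(1)$, this gives that the $Y>1$ part is also $o(1)$, and the lemma follows. The delicate point is matching the diagonal corrections ($\sum_i x_i^2x_i'^2$ terms) between the general and the Rademacher computation; these are handled using the concentration built into $\Omega''$.
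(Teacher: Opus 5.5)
Your overall strategy works and differs from the paper's route, but the step you call delicate is justified incorrectly and needs repair.

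\textbf{Comparison with the paper.} The paper never evaluates a generating function over $\caG_C^2$. It proceeds in four steps:
\begin{enumerate}
\item It bounds the large-graph tail using $(Y-1)^2\le Y^2+1$.
\item It strips double edges one at a time via the Cauchy--Schwarz estimate \eqref{eq:double_edge_induction}, each costing at most $m_4^2\G$.
\item It reduces to the $Y^2$-weighted $1$-multicycle cutoff of Lemma \ref{lem:cutoff_general}, which rests on the GOE/Hermite comparison in Lemma \ref{lem:Rademacher_difference_positive}.
\item It handles the finitely many bounded graphs with $Y>1$ by Lemma \ref{lem:Y_gamma_dichotomy} and the count $N^{s_1+2s_2-1}$.
\end{enumerate}
You treat $Y\le 1$ with the unweighted cutoff, which is correct: the single-edge part of a $2$-multicycle is a $1$-multicycle, $\sum_{\caG_C^1}(\F/N)^{n^\gamma}=O(1)$ by \eqref{eq:cutoff_a_priori}, and $\sum_D(\G/N^2)^{|D|}\le e^{\G/2}$. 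You then get the $Y>1$ part from one global identity. Writing $w(\gamma)=\F^{n_1^\gamma}\G^{n_2^\gamma}N^{-m^\gamma}$, the sum $\sum_{\caG_C^2}w(\gamma)Y(\gamma)^2$ is exactly a two-replica expectation of $\prod_{i<j}(1+\F N x_ix_jx_i'x_j'+\G N^2x_i^2x_j^2x_i'^2x_j'^2)$, which you compare with its Rademacher value. This is more direct and avoids both the double-edge induction and the detour through the GOE likelihood ratio. The $Y^2$-weighted tail bounds needed later still follow, since $\sum_{n^\gamma\ge K}wY^2\le\sum_{Y>1}w(Y^2-1)+\sum_{n^\gamma\ge K}w$.

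\textbf{The flaw.} The relation $\sum_i y_i^2y_i'^2\approx N$ does not hold deterministically on $\Omega\times\Omega$. Nor can $\Omega''$ supply it, since $\Omega''$ constrains each replica separately. Cauchy--Schwarz on $\Omega''$ only gives $\sum_i y_i^2y_i'^2\le(1+o(1))Nm_4$. Your ``$\F m_4^2/(2N)$'' correction also points the wrong way: if the diagonal corrections depended on $m_4$, the two limits would differ and the argument would collapse.

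\textbf{The repair.}
\begin{itemize}
\item Under the product measure, $N^{-1}\sum_i y_i^2y_i'^2\to\E[y^2]\E[y'^2]=1$ in probability.
\item After the routine replacement of $\prod(1+a_{ij}+b_{ij})$ by $\prod(1+a_{ij})(1+b_{ij})$, at cost $1+O(N^{-1})$ on $\Omega$, the integrand therefore equals $\exp(\F N\langle\bsx,\bsx'\rangle^2/2)\,e^{-\F/2-\F^2/4+\G/2+o_P(1)}$. In the normalized case, use the normalized overlap.
\item Since this is only convergence in probability, you must add uniform integrability. By $\log(1+t)\le t$, the integrand is at most $e^{\G m_4^2/2+o(1)}\exp(\F N\langle\bsx,\bsx'\rangle^2/2)$.
\item This bound is bounded in $L^{1+\delta}$ by Assumption \ref{assump:second_moment}, because $\F<\SNR_c$, exactly as in the proof of Lemma \ref{lem:Gaussian_second_moment}.
\item Slutsky's lemma plus uniform integrability then give the same limit $(1-\F)^{-1/2}e^{-\F/2-\F^2/4+\G/2}$ for both the general and the Rademacher sums.
\end{itemize}
You need this exact equality, not an ``$e^{O(1)}$-type'' limit. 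With it, $\sum_{\caG_C^2}w(Y^2-1)=o(1)$. Combined with your $Y\le1$ estimate, this gives $\sum_{Y>1}w(Y^2-1)=o(1)$, and the lemma follows.
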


\begin{proof}
Recall that $n_1^\gamma$ is the number of the single edges in $\gamma$ and $n_2^\gamma$ the number of the double edges, and in particular, $\E[Z(\gamma)^2] = \F^{n_1^\gamma} \G^{n_2^\gamma}$. If $\gamma$ and $\gamma'$ in $\caG_C^2$ are distinct, then they are orthogonal since we are assuming $\E[P_{ij}^{(1)} P_{ij}^{(2)}] = 0$. Thus, from $m^\gamma = n_1^\gamma + 2n_2^\gamma$,
\beq \begin{split} \label{eq:2-cycle_decomposition}
	\E[(X_2 - R_2)^2] = \sum_{\gamma \in \caG_C^2} \left( \frac{1}{N} \right)^{m^\gamma} \F^{n_1^\gamma} \G^{n_2^\gamma} \left( Y(\gamma) -1 \right)^2 = \sum_{\gamma \in \caG_C^2} \left( \frac{\F}{N} \right)^{n_1^\gamma} \left( \frac{\G}{N^2} \right)^{n_2^\gamma} \left( Y(\gamma) -1 \right)^2.
\end{split} \eeq
We first prove the cutoff property for the right side of \eqref{eq:2-cycle_decomposition}, which asserts that in this sum the contribution from large graphs is negligible.
To estimate the contribution from large graphs, we will use the fact that $(Y(\gamma)-1)^2 \le Y(\gamma)^2 +1$.

For $\gamma \in \caG_C^2$, let $\gamma^+$ be the set of $2$-multicycles (without self-loops) that can be obtained by adding a double edge to $\gamma$. Then, since connecting nodes $i$ and $j$ by a double edge increases the degrees $d_i^{\gamma}$ and $d_j^{\gamma}$ by $2$, respectively, and remain other degrees unchanged,
\[ \begin{split}
	&\sum_{\gamma' \in \gamma^+} \left( \frac{\F}{N} \right)^{n_1^{\gamma'}} \left( \frac{\G}{N^2} \right)^{n_2^{\gamma'}} Y(\gamma')^2 \leq \left( \frac{\F}{N} \right)^{n_1^\gamma} \left( \frac{\G}{N^2} \right)^{n_2^\gamma +1} \sum_{i<j} \left(\E_\caY \left[ \indi(\Omega) y_i^2 y_j^2 \prod_{k=1}^N y_k^{d^{\gamma}_k} \right] \right)^2 \\
	&\leq \left( \frac{\F}{N} \right)^{n_1^\gamma} \left( \frac{\G}{N^2} \right)^{n_2^\gamma +1} \sum_{i<j} \left(\E_\caY \left[ \indi(\Omega) y_i^4 y_j^4 \prod_{k=1}^N y_k^{d^{\gamma}_k} \right] \right) \left(\E_\caY \left[\prod_{k=1}^N \indi(\Omega) y_k^{d^{\gamma}_k} \right] \right) \\
	&= \left( \frac{\F}{N} \right)^{n_1^\gamma} \left( \frac{\G}{N^2} \right)^{n_2^\gamma +1} \left(\E_\caY \left[\prod_{k=1}^N \indi(\Omega) y_k^{d^{\gamma}_k} \right] \right) \left(\E_\caY \left[ \indi(\Omega) \left( \sum_{i<j} y_i^4 y_j^4 \right) \prod_{k=1}^N y_k^{d^{\gamma}_k} \right] \right).
\end{split} \]

From the definition of $\Omega$, we have
\[ \begin{split}
	\E_\caY \left[ \indi(\Omega) \left( \sum_{i<j} y_i^4 y_j^4 \right) \prod_{k=1}^N y_k^{d^{\gamma}_k} \right] &\leq \E_\caY \left[ \frac{\indi(\Omega)}{2} \left( \sum_{i=1}^N y_i^4 \right)^2 \prod_{k=1}^N y_k^{d^{\gamma}_k} \right] \\
	&\leq \frac{N^2 m_4^2}{2} (1+N^{-1/5}) \E_\caY \left[ \indi(\Omega) \prod_{k=1}^N y_k^{d^{\gamma}_k} \right].
\end{split} \]
We now have
\beq \label{eq:double_edge_induction}
\sum_{\gamma' \in \gamma^+} \left( \frac{\F}{N} \right)^{n_1^{\gamma'}} \left( \frac{\G}{N^2} \right)^{n_2^{\gamma'}} Y(\gamma')^2 \leq (m_4^2 \G) \left( \frac{\F}{N} \right)^{n_1^\gamma} \left( \frac{\G}{N^2} \right)^{n_2^\gamma} Y(\gamma)^2.
\eeq

Applying \eqref{eq:double_edge_induction} inductively, if we let $\gamma^{+r}$ be the set of $2$-multicycles (without self-loops) that can be obtained by adding $r$ double edges to $\gamma$, then
\[
	\sum_{\gamma' \in \gamma^{+r}} \left( \frac{\F}{N} \right)^{n_1^{\gamma'}} \left( \frac{\G}{N^2} \right)^{n_2^{\gamma'}} Y(\gamma')^2 \leq \frac{\left(m_4^2 \G \right)^r}{r!} \left( \frac{\F}{N} \right)^{n_1^\gamma} \left( \frac{\G}{N^2} \right)^{n_2^\gamma} Y(\gamma)^2,
\]
where the factor $1/(r!)$ is due to the fact that $r$ double edges added in the inductive step were counted $(r!)$-times. Summing it over $r$, we get
\beq \begin{split}\label{eq:2-cycle_cutoff_3}
	&\sum_{\gamma \in \caG_C^2, n_1^\gamma \geq s} \left( \frac{\F}{N} \right)^{n_1^\gamma} \left( \frac{\G}{N^2} \right)^{n_2^\gamma} Y(\gamma)^2 \leq \sum_{\gamma' \in \caG_C^1, n^{\gamma'} \geq s} \left( \frac{\F}{N} \right)^{n^{\gamma'}} Y(\gamma')^2 \sum_{r=0}^{\infty} \frac{\left(m_4^2 \G \right)^r}{r!} \\
	&= e^{m_4^2 \G} \sum_{\gamma' \in \caG_C^1, n^{\gamma'} \geq s} \left( \frac{\F}{N} \right)^{n^{\gamma'}} Y(\gamma')^2.
\end{split} \eeq
To estimate the right side of \eqref{eq:2-cycle_cutoff_3}, we need a cutoff property that generalizes Lemma \ref{lem:ALR_cutoff}.
\begin{lem}[1-multicycle cutoff property for general prior] \label{lem:cutoff_general}
If $\F < \SNR_c$, then for any fixed $\epsilon > 0$, there exists ($N$-independent) constant $K_\epsilon' \equiv K_\epsilon'(\F)$ such that for any $s\geq K_\epsilon'$ and for any sufficiently large $N$,
\[
	\sum_{\gamma \in \caG_C^1, n^\gamma \geq s} \left( \frac{\F}{N} \right)^{n^{\gamma}} Y(\gamma)^2 <\epsilon.
\]
\end{lem}
We prove Lemma \ref{lem:cutoff_general} in Section \ref{sec:proofs}. By Lemma \ref{lem:cutoff_general}, we conclude from \eqref{eq:2-cycle_cutoff_3} that
\[ 
	 \sum_{\gamma \in \caG_C^2, n_1^\gamma \geq s} \left( \frac{\F}{N} \right)^{n_1^\gamma} \left( \frac{\G}{N^2} \right)^{n_2^\gamma} Y(\gamma)^2 < e^{m_4^2 \G} \epsilon,
\]
and by considering the Rademacher prior,
\[ 
	 \sum_{\gamma \in \caG_C^2, n_1^\gamma \geq s} \left( \frac{\F}{N} \right)^{n_1^\gamma} \left( \frac{\G}{N^2} \right)^{n_2^\gamma} < e^{m_4^2 \G} \epsilon.
\]
This shows that when we prove that the right side of \eqref{eq:2-cycle_decomposition} is $o(1)$, we only need to consider $2$-multicycles $\gamma$ such that $n_1^\gamma < K_{\epsilon}'$.

Similarly, we can find a constant $\tilde K_{\epsilon}$ such that for any $r_0 \geq \tilde K_{\epsilon}$,
\beq \begin{split} \label{eq:2-cycle_cutoff_2}
	&\sum_{\gamma \in \caG_C^2, n_1^\gamma < K_\epsilon', n_2^\gamma \geq r_0} \left( \frac{\F}{N} \right)^{n_1^\gamma} \left( \frac{\G}{N^2} \right)^{n_2^\gamma} Y(\gamma)^2 \\
	&\leq \left( \sum_{\gamma' \in \caG_C^1, n^{\gamma'} < K_\epsilon'} \left( \frac{\F}{N} \right)^{n^{\gamma'}} Y(\gamma')^2 \right) \left( \sum_{r=r_0}^{\infty} \frac{\left(m_4^2 \G \right)^r}{r!} \right) < \epsilon,
\end{split} \eeq
since in the right side of \eqref{eq:2-cycle_cutoff_2}, the first sum is $O(1)$ as $K_\epsilon'$ is a constant, and the second sum is a tail part of the Taylor series of the exponential function. 
Also, by considering the Rademacher prior,
\[
    \sum_{\gamma \in \caG_C^2, n_1^\gamma < K_\epsilon', n_2^\gamma \geq r_0} \left( \frac{\F}{N} \right)^{n_1^\gamma} \left( \frac{\G}{N^2} \right)^{n_2^\gamma} <\epsilon.
\]
This shows that when we prove that the right side of \eqref{eq:2-cycle_decomposition} is $o(1)$, we only need to consider $2$-multicycles $\gamma$ such that $n_1^\gamma < K_{\epsilon}$ and $n_2^\gamma < \tilde K_{\epsilon}$.

It remains to estimate
\beq \label{eq:2-cycle_bounded_target}
	\sum_{\gamma \in \caG_C^2, n_1^\gamma < K_\epsilon', n_2^\gamma < \tilde K_{\epsilon}} \left( \frac{\F}{N} \right)^{n_1^\gamma} \left( \frac{\G}{N^2} \right)^{n_2^\gamma} (Y(\gamma)^2 -1).
\eeq
Following the proof of Lemma \ref{lem:1-cycle_universality}, for $\gamma \in \caG_C^2$ with $n_1^\gamma < K_{\epsilon}'$ and $n_2^\gamma < \tilde K_{\epsilon}$, we separate the case $Y(\gamma) \leq 1$ and $Y(\gamma) > 1$ to find that the sum in \eqref{eq:2-cycle_bounded_target} is $o(1)$. For the former, we apply the first part of Lemma \ref{lem:Y_gamma_dichotomy}. For the latter, we apply the second part of Lemma \ref{lem:Y_gamma_dichotomy} to find that $\gamma$ has a node with degree $4$ or higher. Since a $1$-multicycle with $s_1$ edges has at most $s_1$ nodes with degree $2$ or higher, each double edge connects $2$ nodes, and at least one node is with degree $4$ or higher, there are at most $N^{s_1 + 2s_2 - 1}$ different ways of choosing the nodes of $\gamma$. Since the number of ways of connecting the chosen nodes accordingly is bounded by a constant (depending only on $s_1$ and $s_2$), we find that
\[
	\sum_{\gamma \in \caG_C^2, n_1^\gamma =s_1 < K_\epsilon', n_2^\gamma =s_2 < \tilde K_{\epsilon}, Y(\gamma) > 1} \left( \frac{\F}{N} \right)^{n_1^\gamma} \left( \frac{\G}{N^2} \right)^{n_2^\gamma} (Y(\gamma)^2 -1) = O(N^{-1}).
\]
Summing it over $s_1 = 1, 2, \dots, K_\epsilon'$ and $s_2 = 1, 2, \dots, \tilde K_{\epsilon}$, we finish the proof of the desired lemma.
\end{proof}

\subsection{3-multicycle estimate} \label{subsec:3-cycle}

Our goal in this subsection is to prove the following estimate on $3$-multicycles without self-loops.
\begin{lem} \label{lem:3-cycle_estimate}
Recall the definition of $R_2$ in \eqref{eq:object_R2}. Define
\beq \label{eq:object_3}
	X_3:= \sum_{\gamma \in \caG_C^3} \frac{1}{N^{m^\gamma/2}} Y(\gamma) Z(\gamma).
\eeq
Then, $\E[(X_3 - R_2)^2] = o(1)$.
\end{lem}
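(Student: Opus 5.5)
Write $X_3 - R_2 = (X_3 - X_2) + (X_2 - R_2)$. By Lemma \ref{lem:2-cycle_universality}, $\E[(X_2-R_2)^2]=o(1)$, so the claim reduces to $\E[(X_3-X_2)^2]=o(1)$. Here
\[
	X_3 - X_2 = \sum_{\gamma \in \caG_C^3 \setminus \caG_C^2} \frac{1}{N^{m^\gamma/2}} Y(\gamma) Z(\gamma),
\]
and every $\gamma$ in this sum has $n_3^\gamma \ge 1$. The heuristic is that each triple edge costs a factor $N^{-3/2}$ while only adding at most two new nodes. After squaring, a triple edge carries $\cn_3/N^3$, against the $N^2$ ways of placing it, so each triple edge should give a factor $O(N^{-1})$.

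I expand the square as $\sum_{\gamma,\gamma'} N^{-(m^\gamma+m^{\gamma'})/2} Y(\gamma)Y(\gamma')\E[Z(\gamma)Z(\gamma')]$. Orthogonality in $\caG_C^3$ is no longer automatic: the assumptions give $\E[P^{(1)}P^{(2)}]=\E[P^{(2)}P^{(3)}]=0$, but not $\E[P^{(1)}P^{(3)}]=0$. So $\gamma$ and $\gamma'$ may differ on edges where one has multiplicity $1$ and the other $3$. I handle this with Cauchy--Schwarz on the pairing,
\[
	|\E[Z(\gamma)Z(\gamma')]| \le \E[Z(\gamma)^2]^{1/2}\E[Z(\gamma')^2]^{1/2},
\]
applied only when the pair is non-orthogonal. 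Non-orthogonality forces $\gamma$ and $\gamma'$ to share the same underlying simple graph, the same double edges, and to differ only by swapping $1\leftrightarrow 3$ on some edges. For a fixed $\gamma$ there are at most $2^{n^\gamma}$ such partners. Both graphs must be multicycles, but I will not need that constraint.

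Using $2ab\le a^2+b^2$, the sum is then bounded by
\[
	\sum_{\gamma} 2^{n^\gamma}\Big(\frac{\F}{N}\Big)^{n_1^\gamma}\Big(\frac{\G}{N^2}\Big)^{n_2^\gamma}\Big(\frac{\cn_3}{N^3}\Big)^{n_3^\gamma}Y(\gamma)^2 .
\]
The $2^{n^\gamma}$ is harmless after cutoff, or can be absorbed by splitting by the type pattern. To bound this sum, I reproduce the induction of Lemma \ref{lem:2-cycle_universality}. Adding a triple edge $(i,j)$ to $\gamma$ gives a factor $\cn_3 N^{-3}\sum_{i<j}(\E_\caY[\indi(\Omega)y_i^3y_j^3\prod y_k^{d_k}])^2$. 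By Cauchy--Schwarz this is at most $\cn_3N^{-3}Y(\gamma)\,\E_\caY[\indi(\Omega)(\sum_i y_i^6)^2\prod y_k^{d_k}]$. On $\Omega\subset\Omega''$ with $\max|x_k|\le N^{-29/60}$, we have $\sum_i y_i^6 \le N\cdot N^{4\epsilon'}$, which is $O(N^{1+o(1)})$. A cleaner route bounds $\sum y_i^6 \le (\sum y_i^4)^{1/2}(\sum y_i^8)^{1/2} = O(N)$ using the $m_8$ control in $\Omega''$. Either way each added triple edge contributes $O(\cn_3 N^{-1})$ times $Y(\gamma)^2$, so summing over $r\ge1$ triple edges gives $(e^{CN^{-1}}-1)=O(N^{-1})$ times the corresponding $2$-multicycle sum. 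That sum is $O(1)$ by \eqref{eq:2-cycle_cutoff_3}, Lemma \ref{lem:cutoff_general}, and the double-edge induction.

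One subtlety is that removing a triple edge from a $3$-multicycle leaves odd degrees. To keep the induction consistent with $\caG_C^2$ as the base, I peel off a triple edge as a single edge plus a double edge, i.e.\ I regard it as multiplicity $3 = 1+2$. The base object becomes $\gamma$ with the triple edges replaced by single edges, which is a $2$-multicycle. The added weight per triple edge is then $(\cn_3/N^3)/(\F/N)$ times a prior factor from adding $y_i^2y_j^2$. By Cauchy--Schwarz as in \eqref{eq:double_edge_induction}, this is bounded by $N^{-2}\cdot N^2 m_4^2(1+o(1))$, giving a total gain of order $\cn_3 m_4^2/(\F N)$.

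The main obstacle is this bookkeeping: making the non-orthogonal cross terms ($P^{(1)}$ versus $P^{(3)}$) and the degree-parity issue compatible with the inductive cutoff. The fix is to map each $\gamma\in\caG_C^3$ to its reduction $\bar\gamma\in\caG_C^2$ (triple edges turned into single edges). I then show that, for fixed $\bar\gamma$ with $k$ candidate single edges, the total weight of lifts with $r\ge1$ triple edges is at most $\binom{k}{r}(C/N)^r$ times the weight of $\bar\gamma$. For large $k$ this binomial factor must be controlled via $(1+C/N)^k-1$. This stays small only after the cutoff $n_1<K'_\epsilon$, which is exactly why the cutoff from Lemma \ref{lem:cutoff_general} is applied first.
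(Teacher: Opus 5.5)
You have the paper's skeleton: reduce to $X_3-X_2$, group by similarity classes whose representative $\bar\gamma\in\caG_C^2$ has its triple edges turned into single edges, get a binomial factor per class, then use a cutoff. However, two of the estimates that make it work are either wrong or missing.

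\textbf{The cross-term bound and the tail.} Bounding the non-orthogonal cross terms by $2ab\le a^2+b^2$ charges each $\gamma$ with its number of partners, up to $2^{n_1^\gamma+n_3^\gamma}$. This effectively replaces $\F$ by $2\F$. Once $2\F$ passes the convergence threshold, the resulting bound is not $o(1)$. For the Rademacher prior with $\F\in(1/2,\SNR_c)$, simple cycles of length $k\sim\sqrt N$ alone contribute about $(2\F)^{\sqrt N}N^{-2}$. Calling the $2^{n^\gamma}$ ``harmless after cutoff'' is circular, since the cutoff has to be proved for that very sum.

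What works is to keep each class intact and use
\[
\E\Big[\Big(\sum_{\gamma'\sim\bar\gamma}a_{\gamma'}Z(\gamma')\Big)^2\Big]\le\Big(\sum_{\gamma'\sim\bar\gamma}|a_{\gamma'}|\,\E[Z(\gamma')^2]^{1/2}\Big)^2 .
\]
This gives $w(\bar\gamma)\big[(1+c)^{n_1^{\bar\gamma}}-1\big]^2\le w(\bar\gamma)\big[(1+c)^{2n_1^{\bar\gamma}}-1\big]$, where $w(\bar\gamma)=(\F/N)^{n_1^{\bar\gamma}}(\G/N^2)^{n_2^{\bar\gamma}}Y(\bar\gamma)^2$.

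Even then, ``applying the cutoff first'' does not dispose of the tail $n_1^{\bar\gamma}\ge K$, because those terms carry the unbounded factor $(1+c)^{2n_1^{\bar\gamma}}$. You must absorb it into $\F''=\F(1+c)^2$ and apply Lemma \ref{lem:cutoff_general}, together with \eqref{eq:2-cycle_cutoff_3}, at $\F''$. This is legitimate because $\F''$ is still below $\SNR_c$ for large $N$, since $\F<\SNR_c$ strictly and $c\to0$.

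\textbf{The per-edge smallness $c$.} Your derivation of $c$ does not hold. The Cauchy--Schwarz step of \eqref{eq:double_edge_induction} sums over all $\binom N2$ positions. It yields $\frac{\cn_3}{\F N^2}\cdot\frac{N^2m_4^2}{2}(1+o(1))=O(1)$, not $O(N^{-1})$; the extra $1/N$ in your computation has no source. Moreover, a lift of $\bar\gamma$ can only triple one of the $n_1^{\bar\gamma}$ single edges already present, so there is no free sum over positions to exploit.

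What is needed is a pointwise bound per converted edge. On $\Omega$ one has $0\le y_k\le K=2N^{1/60}$, hence $Y(\gamma_1)\le K^{4n_3^{\gamma_1}}Y(\bar\gamma)$ and $c=K^4\sqrt{\cn_3}/(N\sqrt{\F})=O(N^{-14/15})$. With this, the classes with $n_1^{\bar\gamma}$ and $n_2^{\bar\gamma}$ bounded contribute $O(N^{-14/15})$ by the power counting in Lemma \ref{lem:2-cycle_universality}. The remaining classes are handled by the tail absorption described above.
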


\begin{proof}[Proof of Lemma \ref{lem:3-cycle_estimate}]
Since
\[
	\E[(X_3 - R_2)^2] \leq 2\E[(X_3 - X_2)^2] + 2\E[(X_2 - R_2)^2],
\]
and $\E[(X_2 - R_2)^2] = o(1)$ from Lemma \ref{lem:2-cycle_universality}, it suffices to show that $\E[(X_3 - X_2)^2] = o(1)$. We consider the expansion
\beq \begin{split} \label{eq:distinct_3-cycle}
	X_3 - X_2 = \sum_{\gamma \in \caG_C^3 \setminus \caG_C^2} \frac{1}{N^{m^\gamma/2}} Y(\gamma) Z(\gamma).
\end{split} \eeq
For the second moment computation, we adapt the proof of the $1$-multicycle universality. We introduce the concept of the similarity between multicycles as follows.
\begin{defn}[Similarity] \label{def:similar}
We say that two multicycles $\gamma$ and $\gamma'$ are similar and write $\gamma \sim \gamma'$ if the following holds: for any nodes $i$ and $j$, $m^\gamma_{ij} > 0$ iff $m^{\gamma'}_{ij} > 0$, and the difference of the multiplicities $|m^\gamma_{ij} - m^{\gamma'}_{ij}|$ is even.
\end{defn}
In words, $\gamma$ and $\gamma'$ are similar if the underlying simple graphs for $\gamma$ and $\gamma'$ coincide with each other and the parities of the multiplicities of corresponding edges of $\gamma$ and $\gamma'$ also match; in particular, if $\gamma \sim \gamma'$, then the lengths of $\gamma$ and $\gamma'$ are the same.

Suppose that we compute the second moment $\E[(X_3 - X_2)^2]$ as in the proof of Lemma \ref{lem:monotonic_variance}. If two $3$-multicycles $\gamma, \gamma' \in \caG_C^3$ are not similar, $m^\gamma_{ab} \neq 0$ but $m^{\gamma'}_{ab} = 0$ (or $m^\gamma_{ab} = 0$ but $m^{\gamma'}_{ab} \neq 0$), or the parities of $m^\gamma_{ab}$ and $m^{\gamma'}_{ab}$ does not match for some nodes $a$ and $b$. Then, we can show that they are orthogonal by pulling out the factor $\E[P_{ab}^{(i)}]$ ($i=1, 2,$ or $3$) or $\E[P_{ab}^{(i)}P_{ab}^{(2)}]$ ($i=1$ or $3$) in the definition of the orthogonality. This suggests that we may consider the equivalence classes of $3$-multicycles based on the similarity. Each equivalence class contains only one $2$-multicycle, so by using $2$-multicycles as representatives of the equivalence classes, we get
\[ \begin{split}
	&\E[(X_3 - X_2)^2] = \sum_{\gamma \in \caG_C^2} \E \left[ \left( \sum_{\gamma' \in \caG_C^3 \setminus \caG_C^2, \gamma' \sim \gamma} \frac{1}{N^{m^{\gamma'}/2}} Y(\gamma') Z(\gamma') \right)^2 \right] \\
	&= \sum_{\gamma \in \caG_C^2} \E \left[ \sum_{\gamma_1 \in \caG_C^3 \setminus \caG_C^2, \gamma_1 \sim \gamma} \frac{1}{N^{m^{\gamma_1}/2}} Y(\gamma_1) Z(\gamma_1) \sum_{\gamma_2 \in \caG_C^3 \setminus \caG_C^2, \gamma_2 \sim \gamma} \frac{1}{N^{m^{\gamma_2}/2}} Y(\gamma_2) Z(\gamma_2) \right].
\end{split} \]

To prove an upper bound for the right side of the equation above, we consider $\gamma_1, \gamma_2 \in \caG_C^3 \setminus \caG_C^2$ that are similar to $\gamma$. We want to estimate the sum of
\beq \begin{split} \label{eq:similar_3_cycle_comparison}
	&\E \left[ \left( \frac{1}{N^{m^{\gamma_1}/2}} Y(\gamma_1) Z(\gamma_1) \right) \left( \frac{1}{N^{m^{\gamma_2}/2}} Y(\gamma_2) Z(\gamma_2) \right) \right] \\
	&= \frac{1}{N^{(m^{\gamma_1} + m^{\gamma_2})/2}} Y(\gamma_1) Y(\gamma_2) \prod_{i<j} \E \left[ P_{ij}^{(m^{\gamma_1}_{ij})} P_{ij}^{(m^{\gamma_2}_{ij})} \right]
\end{split} \eeq
for all such $\gamma_1$ and $\gamma_2$. Applying the Schwarz inequality to that last factor in the right side of \eqref{eq:similar_3_cycle_comparison},
\[
	\E \left[ P_{ij}^{(m^{\gamma_1}_{ij})} P_{ij}^{(m^{\gamma_2}_{ij})} \right] \leq \E \left[ \left( P_{ij}^{(m^{\gamma_1}_{ij})} \right)^2 \right]^{1/2} \E \left[ \left( P_{ij}^{(m^{\gamma_2}_{ij})} \right)^2 \right]^{1/2},
\]
which gives a bound
\beq \begin{split} \label{eq:similar_3_cycle_comparison_2}
	&\left| \E \left[ \left( \frac{1}{N^{m^{\gamma_1}/2}} Y(\gamma_1) Z(\gamma_1) \right) \left( \frac{1}{N^{m^{\gamma_2}/2}} Y(\gamma_2) Z(\gamma_2) \right) \right] \right| \\
	&\leq \frac{1}{N^{(m^{\gamma_1} + m^{\gamma_2})/2}} Y(\gamma_1) Y(\gamma_2) \prod_{i<j} \left( \E \left[ \left( P_{ij}^{(m^{\gamma_1}_{ij})} \right)^2 \right]^{1/2} \E \left[ \left( P_{ij}^{(m^{\gamma_2}_{ij})} \right)^2 \right]^{1/2} \right).
\end{split} \eeq

We focus on the factors depending only on $\gamma_1$ from the right side of \eqref{eq:similar_3_cycle_comparison_2}, i.e.,
\[
	\frac{1}{N^{m^{\gamma_1}/2}} Y(\gamma_1) \prod_{i<j} \E \left[ \left( P_{ij}^{(m^{\gamma_1}_{ij})} \right)^2 \right]^{1/2},
\]
and estimate them in terms of $\gamma$. Recall the definitions of the size $m^{\gamma}$ and the length $n^{\gamma}$ of a multicycle in Definition \ref{def:l-graph}. We have
\[
	\prod_{i<j} \E \left[ \left( P_{ij}^{(m^{\gamma_1}_{ij})} \right)^2 \right]^{1/2} = \F^{n_1^{\gamma_1}/2} \G^{n_2^{\gamma_1}/2} \cn_3^{n_3^{\gamma_1}/2}.
\]
If we compare the degrees of $\gamma_1$ and $\gamma$, we can easily see that $d^{\gamma_1}_k \geq d^{\gamma}_k$ for any node $k$. From the definition of $\Omega$, we know that $y_k \in [0, K]$ on $\Omega$ for $K=2N^{1/60}$. Thus, we also have $y_k^{d^{\gamma_1}_k} \leq K^{d^{\gamma_1}_k - d^{\gamma}_k} y_k^{d^{\gamma}_k}$. Now, from the relation $m^{\gamma_1} - m^{\gamma} = 2n_3^{\gamma_1}$,
\[ \begin{split}
	Y(\gamma_1) = \E_\caY \left[ \indi(\Omega) \prod_{k=1}^N y_k^{d^{\gamma_1}_k} \right] &\leq \E_\caY \left[ \indi(\Omega) \prod_{k=1}^N K^{d^{\gamma_1}_k - d^{\gamma}_k} y_k^{d^{\gamma}_k} \right] \\
	&= K^{\sum_k d^{\gamma_1}_k - \sum_k d^{\gamma}_k} \E_\caY \left[ \indi(\Omega) \prod_{k=1}^N y_k^{d^{\gamma}_k} \right] = K^{4n_3^{\gamma_1}} Y(\gamma).
\end{split} \]
Since $\gamma_1 \sim \gamma$, we find that $n_1^{\gamma} = n_1^{\gamma_1} + n_3^{\gamma_1}$ and $n_2^{\gamma_1} = n_2^{\gamma}$. So far, we have seen
\[ \begin{split}
	&\frac{1}{N^{m^{\gamma_1}/2}} Y(\gamma_1) \prod_{i<j} \E \left[ \left( P_{ij}^{(m^{\gamma_1}_{ij})} \right)^2 \right]^{1/2} = \frac{1}{N^{(n_1^{\gamma_1} + 2n_2^{\gamma_1} + 3n_3^{\gamma_1})/2}} Y(\gamma_1) \F^{n_1^{\gamma_1}/2} \G^{n_2^{\gamma_1}/2} \cn_3^{n_3^{\gamma_1}/2} \\
	&\leq \left( \frac{\F}{N} \right)^{n_1^{\gamma}/2} \left( \frac{\G}{N^2} \right)^{n_2^{\gamma}/2} Y(\gamma) \left( \frac{K^4 \sqrt{\cn_3}}{N\sqrt{\F}} \right)^{n_3^{\gamma_1}}.
\end{split} \]

For a fixed $s \in [[1, n_1^{\gamma}]]$, there are exactly $\binom{n_1^{\gamma}}{s}$ odd multicycles that are similar to $\gamma$ and have $s$ triple edges. Thus, we find from the binomial theorem that
\beq \begin{split}
	&\E \left[ \left( \sum_{\gamma' \in \caG_C^3 \setminus \caG_C^2, \gamma' \sim \gamma} \frac{1}{N^{m^{\gamma'}/2}} Y(\gamma') Z(\gamma') \right)^2 \right] \\
	&\leq \left( \left( \frac{\F}{N} \right)^{n_1^{\gamma}/2} \left( \frac{\G}{N^2} \right)^{n_2^{\gamma}/2} Y(\gamma) \sum_{s=1}^{n_1^{\gamma}} \binom{n_1^{\gamma}}{s} \left( \frac{K^4 \sqrt{\cn_3}}{N\sqrt{\F}} \right)^s \right)^2 \\ 
	&= \left( \frac{\F}{N} \right)^{n_1^{\gamma}} \left( \frac{\G}{N^2} \right)^{n_2^{\gamma}} Y(\gamma)^2 \left[ \left( 1 + \frac{K^4 \sqrt{\cn_3}}{N\sqrt{\F}} \right)^{n_1^\gamma} - 1 \right]^2.
\end{split} \eeq
Summing over all $2$-multicycle $\gamma$, we now have
\beq \begin{split} \label{eq:3-cycle_target}
	\E[(X_3 - X_2)^2] &\leq \sum_{\gamma \in \caG_C^2} \left( \frac{\F}{N} \right)^{n_1^{\gamma}} \left( \frac{\G}{N^2} \right)^{n_2^{\gamma}} Y(\gamma)^2 \left[ \left( 1 + \frac{K^4 \sqrt{\cn_3}}{N\sqrt{\F}} \right)^{n_1^\gamma} - 1 \right]^2 \\
	&\leq \sum_{\gamma \in \caG_C^2} \left( \frac{\F}{N} \right)^{n_1^{\gamma}} \left( \frac{\G}{N^2} \right)^{n_2^{\gamma}} Y(\gamma)^2 \left[ \left( 1 + \frac{K^4 \sqrt{\cn_3}}{N\sqrt{\F}} \right)^{2n_1^\gamma} - 1 \right].
\end{split} \eeq
To prove that the right side of the equation above is $o(1)$, we use the cutoff properties, Lemma \ref{lem:cutoff_general} and Equation \eqref{eq:2-cycle_cutoff_2}. Recall that we defined $K'_{\epsilon} \equiv K'_\epsilon(\F)$ in the proof of Lemma \ref{lem:cutoff_general}. Letting
\[
	\F'' := \F \left( 1 + \frac{K^4 \sqrt{\cn_3}}{N\sqrt{\F}} \right)^2,
\]
we find
\[ \begin{split}
	&\sum_{\gamma \in \caG_C^2, n_1^\gamma \geq K'_\epsilon(\F'')} \left( \frac{\F}{N} \right)^{n_1^{\gamma}} \left( \frac{\G}{N^2} \right)^{n_2^{\gamma}}Y(\gamma)^2 \left( 1 + \frac{K^4 \sqrt{\cn_3}}{N\sqrt{\F}} \right)^{2n_1^\gamma} \\
	&< \sum_{\gamma \in \caG_C^2, n_1^\gamma \geq K'_\epsilon(\F'')} \left( \frac{\F''}{N} \right)^{n_1^{\gamma}} \left( \frac{\G}{N^2} \right)^{n_2^{\gamma}} Y(\gamma)^2 <\epsilon,
\end{split} \]
since $\F < \F'' < \SNR_c$ for any sufficiently large $N$. Set $K''_{\epsilon} \equiv K'_\epsilon(\F'')$. Then,
\[ \begin{split}
	&\sum_{\gamma \in \caG_C^2} \left( \frac{\F}{N} \right)^{n_1^{\gamma}} \left( \frac{\G}{N^2} \right)^{n_2^{\gamma}} Y(\gamma)^2 \left[ \left( 1 + \frac{K^4 \sqrt{\cn_3}}{N\sqrt{\F}} \right)^{2n_1^\gamma} - 1 \right] \\
	&\leq \sum_{\gamma \in \caG_C^2, n_1^\gamma < K''_{\epsilon}} \left( \frac{\F}{N} \right)^{n_1^{\gamma}} \left( \frac{\G}{N^2} \right)^{n_2^{\gamma}} Y(\gamma)^2 \left[ \left( 1 + \frac{K^4 \sqrt{\cn_3}}{N\sqrt{\F}} \right)^{2n_1^\gamma} - 1 \right] + \epsilon \\
	&\leq \frac{C K^4}{N} \sum_{\gamma \in \caG_C^2, n_1^\gamma < K''_{\epsilon}} \left( \frac{\F}{N} \right)^{n_1^{\gamma}} \left( \frac{\G}{N^2} \right)^{n_2^{\gamma}} Y(\gamma)^2 + \epsilon \\
	&\leq \frac{C K^4}{N} \sum_{\gamma \in \caG_C^2, n_1^\gamma < K''_{\epsilon}, n_2^\gamma < \tilde K_{\epsilon}} \left( \frac{\F}{N} \right)^{n_1^{\gamma}} \left( \frac{\G}{N^2} \right)^{n_2^{\gamma}} Y(\gamma)^2 + 2\epsilon
\end{split} \]
for some constant $C$. In the proof of Lemma \ref{lem:2-cycle_universality}, we find that the right side in the equation above can be estimated as
\[
	\sum_{\gamma \in \caG_C^2, n_1^\gamma < K''_{\epsilon}, n_2^\gamma < \tilde K_{\epsilon}} \left( \frac{\F}{N} \right)^{n_1^{\gamma}} \left( \frac{\G}{N^2} \right)^{n_2^{\gamma}} Y(\gamma)^2 = \sum_{\gamma \in \caG_C^2, n_1^\gamma < K''_{\epsilon}, n_2^\gamma < \tilde K_{\epsilon}} \left( \frac{\F}{N} \right)^{n_1^{\gamma}} \left( \frac{\G}{N^2} \right)^{n_2^{\gamma}} + o(1),
\]
and we find by naive power counting that the right side is $O(1)$. Thus, we find that the right side of \eqref{eq:3-cycle_target} is bounded by $2\epsilon + O(N^{-14 / 15})$, and this completes the proof of the desired lemma.
\end{proof}

\subsection{4-multicycle estimate} \label{subsec:4-cycle}

We next consider the following estimate on $4$-multicycles without self-loops.
\begin{lem} \label{lem:4-cycle_estimate}
Recall the definition of $R_2$ in \eqref{eq:object_R2}. Define
\beq \label{eq:object_4}
	X_4 := \sum_{\gamma \in \caG_C^4} \frac{1}{N^{m^\gamma/2}} Y(\gamma) Z(\gamma).
\eeq
Then, 
\beq
	\E[(X_4 - R_2)^2] = o(1).
\eeq
\end{lem}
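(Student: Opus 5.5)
By Lemma \ref{lem:3-cycle_estimate}, $\E[(X_3 - R_2)^2] = o(1)$. Since
\[
	\E[(X_4 - R_2)^2] \leq 2\E[(X_4 - X_3)^2] + 2\E[(X_3 - R_2)^2],
\]
it therefore suffices to show $\E[(X_4 - X_3)^2] = o(1)$. The difference $X_4 - X_3$ is the sum over $\gamma' \in \caG_C^4 \setminus \caG_C^3$, i.e., multicycles with at least one quadruple edge. I would again group these by the similarity of Definition \ref{def:similar}. Each similarity class contains a unique representative $\gamma \in \caG_C^2$, obtained by reducing every multiplicity $3 \to 1$ and $4 \to 2$.

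For non-similar $\gamma_1, \gamma_2$, orthogonality follows from the hypotheses of Proposition \ref{prop:main}. If an edge is present in one multicycle and absent in the other, we pull out a factor $\E[P^{(n)}_{ab}] = 0$. If both contain the edge with opposite parities of multiplicity, we pull out one of $\E[P^{(1)}P^{(2)}]$, $\E[P^{(2)}P^{(3)}]$, $\E[P^{(3)}P^{(4)}]$, $\E[P^{(1)}P^{(4)}]$, all of which vanish. The same reasoning as in the proof of Lemma \ref{lem:3-cycle_estimate} then gives
\[
	\E[(X_4 - X_3)^2] = \sum_{\gamma \in \caG_C^2} \E\Big[\Big(\sum_{\gamma' \sim \gamma,\ \gamma' \in \caG_C^4 \setminus \caG_C^3} N^{-m^{\gamma'}/2} Y(\gamma') Z(\gamma')\Big)^2\Big].
\]
I would bound each cross term by Schwarz exactly as in \eqref{eq:similar_3_cycle_comparison_2}.

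The single-graph factor is bounded next. Raising a single edge to a triple edge, or a double edge to a quadruple edge, increases $m$ by $2$ and increases the total degree by $4$. Using $y_k \le K = 2N^{1/60}$ on $\Omega$, we get $Y(\gamma') \le K^{4(n_3^{\gamma'} + n_4^{\gamma'})} Y(\gamma)$. Hence
\[
	N^{-m^{\gamma'}/2} Y(\gamma') \prod_{i<j} \E\big[(P^{(m^{\gamma'}_{ij})}_{ij})^2\big]^{1/2}
	\le \Big(\frac{\F}{N}\Big)^{n_1^\gamma/2} \Big(\frac{\G}{N^2}\Big)^{n_2^\gamma/2} Y(\gamma)\, a^{n_3^{\gamma'}} b^{n_4^{\gamma'}},
\]
where $a = K^4\sqrt{\cn_3}/(N\sqrt{\F})$ and $b = K^4\sqrt{\cn_4}/(N\sqrt{\G})$. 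If $\G = 0$, I would treat it separately: then $P^{(2)} = 0$, so double edges are absent. In that case each quadruple edge sits on a pair with no edge in $\gamma$, and the accounting is slightly different, so I would handle it by writing the bound without dividing by $\sqrt{\G}$. Summing over the choices of which edges are raised, the binomial theorem gives the bound
\[
	\Big(\frac{\F}{N}\Big)^{n_1^\gamma}\Big(\frac{\G}{N^2}\Big)^{n_2^\gamma} Y(\gamma)^2 \Big[(1+a)^{n_1^\gamma}(1+b)^{n_2^\gamma} - (1+a)^{n_1^\gamma}\Big]^2.
\]
This is at most $(\F/N)^{n_1^\gamma}(\G/N^2)^{n_2^\gamma} Y(\gamma)^2 \big[(1+a)^{2n_1^\gamma}(1+b)^{2n_2^\gamma} - 1\big]$.

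I expect the cutoff step to be the main obstacle, because the factor $(1+b)^{2n_2^\gamma}$ must be absorbed while $n_2^\gamma$ can be large. I would absorb it by replacing $\G$ with $\G'' := \G(1+b)^2$ and $\F$ with $\F'' := \F(1+a)^2 < \SNR_c$. The double-edge induction \eqref{eq:double_edge_induction} then gives a factor $e^{m_4^2 \G''}$, which is bounded uniformly since $\G'' \to \G$. Lemma \ref{lem:cutoff_general} applied with $\F''$, together with the tail-of-exponential argument in \eqref{eq:2-cycle_cutoff_2}, shows that graphs with $n_1^\gamma \ge K''_\epsilon$ or $n_2^\gamma \ge \tilde K_\epsilon$ contribute less than $2\epsilon$. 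On the remaining bounded set of graphs, the bracket is $O(K^4/N) = O(N^{-14/15})$. The weighted sum there is $O(1)$ by the final step of the proof of Lemma \ref{lem:2-cycle_universality}. Letting $\epsilon \to 0$ completes the argument.
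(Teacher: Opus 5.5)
Your proposal is correct and follows essentially the paper's argument. Both use similarity classes with representatives in $\caG_C^2$, the Schwarz bound, the $K^{4}$ degree estimate, binomial summation, and the cutoff via Lemma \ref{lem:cutoff_general} and \eqref{eq:2-cycle_cutoff_2}. The only differences are that you peel off $X_3$ using Lemma \ref{lem:3-cycle_estimate} rather than comparing $X_4$ directly with $X_2$, and that you spell out the absorption of $(1+b)^{2n_2^\gamma}$ into $\G''$, which the paper leaves implicit.
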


\begin{rem}
From Lemma \ref{lem:4-cycle_estimate}, we also find that $\E[(X_4 - R_4)^2] = o(1)$, where $R_4$ is defined by the right side of \eqref{eq:object_4} with $Y(\gamma) = 1$.
\end{rem}

\begin{proof}[Proof of Lemma \ref{lem:4-cycle_estimate}]
Since we closely follow the proof of Lemma \ref{lem:3-cycle_estimate}, we only outline the proof here. First, since
\[
	\E[(X_4 - R_2)^2] \leq 2\E[(X_4 - X_2)^2] + 2\E[(X_2 - R_2)^2],
\]
and $\E[(X_2 - R_2)^2] = o(1)$ from \ref{lem:2-cycle_universality}, it suffices to show that $\E[(X_4 - X_2)^2] = o(1)$. As in the proof of Lemma \ref{lem:3-cycle_estimate}, we consider the expansion
\beq \begin{split} \label{eq:distinct_4-cycle}
	X_4 - X_2 = \sum_{\gamma \in \caG_C^4 \setminus \caG_C^2} \frac{1}{N^{m^\gamma/2}} Y(\gamma) Z(\gamma)
\end{split} \eeq
and rewrite it by using the equivalence classes defined through the similarity in Definition \ref{def:similar}. We need to consider $\gamma_1, \gamma_2 \in \caG_C^4 \setminus \caG_C^2$ that are similar to $\gamma \in \caG_C^2$, and need to estimate the sum of the terms in \eqref{eq:similar_3_cycle_comparison_2}. We focus on the factors depending only on $\gamma_1$ from the right side of \eqref{eq:similar_3_cycle_comparison_2}, with
\[
	\prod_{i<j} \E \left[ \left( P_{ij}^{(m^{\gamma_1}_{ij})} \right)^2 \right]^{1/2} = \F^{n_1^{\gamma_1}/2} \G^{n_2^{\gamma_1}/2} \cn_3^{n_3^{\gamma_1}/2} \cn_4^{n_4^{\gamma_1}/2}.
\]
Recall that $y_k \in [0, K]$ on $\Omega$ for $K=2N^{1/60}$. We then have
\[ \begin{split}
	&\frac{1}{N^{m^{\gamma_1}/2}} Y(\gamma_1) \prod_{i<j} \E \left[ \left( P_{ij}^{(m^{\gamma_1}_{ij})} \right)^2 \right]^{1/2} \\
	&\leq \left( \frac{\F}{N} \right)^{n_1^{\gamma}/2} \left( \frac{\G}{N^2} \right)^{n_2^{\gamma}/2} Y(\gamma) \left( \frac{K^4 \sqrt{\cn_3}}{N\sqrt{\F}} \right)^{n_3^{\gamma_1}} \left( \frac{K^4 \sqrt{\cn_4}}{N\sqrt{\G}} \right)^{n_4^{\gamma_1}}.
\end{split} \]
From the binomial theorem,
\beq \begin{split}
	&\E \left[ \left( \sum_{\gamma' \in \caG_C^4 \setminus \caG_C^2, \gamma' \sim \gamma} \frac{1}{N^{m^{\gamma'}/2}} Y(\gamma') Z(\gamma') \right)^2 \right] \\
	&\leq \left( \left( \frac{\F}{N} \right)^{n_1^{\gamma}/2} \left( \frac{\G}{N^2} \right)^{n_2^{\gamma}/2} Y(\gamma) \left(\sum_{s_1=0}^{n_1^{\gamma}} \binom{n_1^{\gamma}}{s_1} \left( \frac{K^4 \sqrt{\cn_3}}{N\sqrt{\F}} \right)^{s_1} \sum_{s_2=0}^{n_2^{\gamma}} \binom{n_2^{\gamma}}{s_2} \left( \frac{K^4 \sqrt{\cn_4}}{N\sqrt{\G}} \right)^{s_2}-1\right)  \right)^2 \\ 
	&= \left( \frac{\F}{N} \right)^{n_1^{\gamma}} \left( \frac{\G}{N^2} \right)^{n_2^{\gamma}} Y(\gamma)^2 \left[ \left( 1 + \frac{K^4 \sqrt{\cn_3}}{N\sqrt{\F}} \right)^{n_1^\gamma}\left( 1 + \frac{K^4 \sqrt{\cn_4}}{N\sqrt{\G}} \right)^{n_2^\gamma} - 1 \right]^2 .
\end{split} \eeq
Now, applying the cutoff properties, Lemma \ref{lem:cutoff_general} and Equation \eqref{eq:2-cycle_cutoff_2}, we can complete the proof of the desired lemma.
\end{proof}

\subsection{Self-loop universality} \label{subsec:self-loop}

Recall that our main object $X$ is defined as the sum over $\gamma \in \caG_L^4$, the set of all $4$-multicycles (possibly with self-loops) and satisfies
\[
	X = \sum_{\gamma \in \caG_L^4} \frac{1}{N^{m^\gamma/2}} Y(\gamma) Z(\gamma) + o_\caW(1).
\]
We will prove the following result.

\begin{lem} \label{lem:self-loop_universality}
Define
\beq \label{eq:object_R2_tilde}
	\tilde R_2 := \sum_{\gamma \in \caG_L^2} \frac{1}{N^{m^\gamma/2}} Z(\gamma)
\eeq
Then, $\E[(X - \tilde R_2)^2] = o(1)$.
\end{lem}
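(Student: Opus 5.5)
Since the diagonal variables $P^{(n)}_{d,kk}$ are independent of the off-diagonal ones, each $\gamma \in \caG_L^4$ splits uniquely as $\gamma = \gamma_C \cup \lambda$. Here $\gamma_C \in \caG_C^4$ is the part without self-loops and $\lambda$ is a collection of single or double self-loops. A self-loop adds $2$ or $4$ to the degree of its node, so $\gamma$ is a multicycle iff $\gamma_C$ is one. The key identity is $Z(\gamma) = Z(\gamma_C) Z(\lambda)$, with
\[
Z(\lambda) = \prod_k P_{d,kk}^{(m^\lambda_{kk})}
\]
and weight $N^{-m^\gamma/2} = N^{-m^{\gamma_C}/2} N^{-\tilde n_1^\lambda/2 - \tilde n_2^\lambda}$.

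I write
\[
X - \tilde R_2 = (X - X_{4,\mathrm{loop}\le 1}) + (X_{4,\mathrm{loop}\le 1} - \tilde R_4) + (\tilde R_4 - \tilde R_2),
\]
where the tilde quantities are the self-loop analogues. I plan instead to mimic the similarity argument of Lemma \ref{lem:3-cycle_estimate} directly. Two multicycles in $\caG_L^4$ are orthogonal unless their self-loop sets coincide. This holds because $\E[P^{(1)}_{d,kk}] = \E[P^{(2)}_{d,kk}] = \E[P^{(1)}_{d,kk}P^{(2)}_{d,kk}] = 0$. So we group by $\lambda$ and by the similarity class of $\gamma_C$, with representative $\gamma_0 \in \caG_C^2$. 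This gives
\[
\E[(X-\tilde R_2)^2] \le \sum_{\lambda}\sum_{\gamma_0\in\caG_C^2} \Big(\frac{\cn'_1}{N}\Big)^{\tilde n_1^\lambda}\Big(\frac{\cn'_2}{N^2}\Big)^{\tilde n_2^\lambda} \E\Big[\Big(\sum_{\gamma'\sim\gamma_0}N^{-m^{\gamma'}/2}\big(Y(\gamma'\cup\lambda)-\indi(\gamma'\in\caG_C^2)\big)Z(\gamma')\Big)^2\Big].
\]
For $\lambda = \emptyset$ this is exactly $\E[(X_4 - R_2)^2] = o(1)$ by Lemma \ref{lem:4-cycle_estimate} (via the $X_4 - X_2$ and $X_2 - R_2$ pieces).

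For $\lambda \neq \emptyset$, I first need a self-loop analogue of \eqref{eq:double_edge_induction}. Adding one single self-loop at a node $k$ and summing over $k$ multiplies $Y$ by at most $N^{-1}\sum_k y_k^2$. On $\Omega$ this factor is $1 + O(N^{-1/5})$, but I use the squared version through Cauchy--Schwarz as in \eqref{eq:double_edge_induction}. Thus
\[
\sum_k \frac{\cn'_1}{N} Y(\gamma\cup\{kk\})^2 \le \cn'_1 \, Y(\gamma)\, \E_\caY\Big[\indi(\Omega)\, N^{-1}\sum_k y_k^4 \prod_i y_i^{d_i^\gamma}\Big] \le \cn'_1 m_4 (1+o(1))\, Y(\gamma)^2.
\]
For double self-loops the factor is $N^{-2}\sum_k y_k^8 \le N^{-1}(m_8+o(1))$, which is negligible. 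Iterating as in the proof of Lemma \ref{lem:2-cycle_universality} yields the factor $e^{\cn'_1 m_4 + O(1/N)}$. Combining this with Lemma \ref{lem:cutoff_general}, \eqref{eq:2-cycle_cutoff_2} and the $\cn_3,\cn_4$ binomial bounds from Lemmas \ref{lem:3-cycle_estimate}--\ref{lem:4-cycle_estimate}, I get a cutoff. Up to $O(\epsilon)$, I may restrict to graphs with $n_1^{\gamma_C}, n_2^{\gamma_C}, \tilde n_1^\lambda$ bounded and to $\lambda$ containing no double self-loops. In the same step I discard triple and quadruple edges, at cost $O(K^8/N)$.

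On this bounded family it remains to show
\[
\sum \Big(\frac{\F}{N}\Big)^{n_1}\Big(\frac{\G}{N^2}\Big)^{n_2}\Big(\frac{\cn'_1}{N}\Big)^{\tilde n_1}\big(Y(\gamma)-1\big)^2 = o(1).
\]
I split by the dichotomy of Lemma \ref{lem:Y_gamma_dichotomy}, which applies since $\gamma$ has bounded length. If every node has degree $2$ (self-loops at isolated nodes), then $Y(\gamma) = 1 + o(1)$ and the number of such graphs is $O(N^{n_1+2n_2+\tilde n_1})$ against the weight $N^{-(n_1+2n_2+\tilde n_1)}$, so the sum is $o(1)$. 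If some node has degree $\ge 4$ (including a self-loop on a node of $\gamma_C$, or two loops at one node, impossible in a $2$-multigraph), then one node is lost in the count. The contribution is therefore $O(N^{-1})\cdot\max Y^2$, and $Y = O(1)$ for bounded degree by moment bounds on $y_k$.

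The main obstacle is the cutoff step. The Cauchy--Schwarz iteration for self-loops must be interleaved with the existing iteration over double edges. The constants must stay uniform, in particular the $1/r!$ overcounting, and the bound $K = 2N^{1/60}$ must not destroy the smallness when triple edges coexist with self-loops. I would first establish a single combined generating bound $\sum_{\gamma} w(\gamma) Y(\gamma)^2 \le e^{m_4^2\G + m_4\cn'_1+o(1)}\sum_{\gamma'\in\caG_C^1} (\F/N)^{n^{\gamma'}} Y(\gamma')^2$, with tail versions, and derive everything from it.
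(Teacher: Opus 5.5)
Your proposal is correct and follows essentially the paper's route. The ingredients match: orthogonality across distinct self-loop configurations and similarity classes, a Cauchy--Schwarz self-loop analogue of \eqref{eq:double_edge_induction} (factors $\cn_1' m_4$ and $\cn_2' m_8/N$) that feeds the cutoff, and Lemma \ref{lem:Y_gamma_dichotomy} with node counting on the bounded family. The differences are only organizational. You handle loops and triple/quadruple edges in one grouping, whereas the paper first proves $\E[(\tilde X_2-\tilde R_2)^2]=o(1)$ and then adapts Lemmas \ref{lem:3-cycle_estimate}--\ref{lem:4-cycle_estimate}. You also count nodes directly (using $Y(\gamma)=O(1)$) rather than splitting into $Y(\gamma)\le 1$ and $Y(\gamma)>1$.
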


\begin{proof}[Proof of Lemma \ref{lem:self-loop_universality}]
We follow the proof of Lemma \ref{lem:2-cycle_universality}. Let
\[
	\tilde X_2 := \sum_{\gamma \in \caG_L^2} \frac{1}{N^{m^\gamma/2}} Y(\gamma) Z(\gamma).
\]
We first want to show that
\beq \label{eq:tilde_X_2_estimate}
	\E[(\tilde X_2 - \tilde R_2)^2] = o(1).
\eeq
Notice that
\[
	\tilde X_2 - \tilde R_2 = \sum_{\gamma \in \caG_L^2} \frac{1}{N^{m^\gamma/2}} \left( Y(\gamma) -1 \right) Z(\gamma).
\]
As in the proof of Lemma \ref{lem:2-cycle_universality}, if $\gamma$ and $\gamma'$ in $\caG_L^2$ are distinct, then they are orthogonal. Thus,
\[ \begin{split}
	&\E[(\tilde X_2 - \tilde R_2)^2] = \sum_{\gamma \in \caG_L^2} \frac{1}{N^{m^\gamma}} \F^{n_1^\gamma - \tilde n_1^\gamma} \G^{n_2^\gamma - \tilde n_2^\gamma} (\cn_1')^{\tilde n_1^\gamma} (\cn_2')^{\tilde n_2^\gamma} \left( Y(\gamma) -1 \right)^2 \\
	&= \sum_{\gamma \in \caG_L^2} \left( \frac{\F}{N} \right)^{n_1^\gamma - \tilde n_1^\gamma} \left( \frac{\G}{N^2} \right)^{n_2^\gamma - \tilde n_2^\gamma} \left( \frac{\cn_1'}{N} \right)^{\tilde n_1^\gamma} \left( \frac{\cn_2'}{N^2} \right)^{\tilde n_2^\gamma} \left( Y(\gamma) -1 \right)^2.
\end{split} \]

We adapt the argument we used to derive \eqref{eq:2-cycle_decomposition}. Notice that adding a simple self-loop at node $i$ increases the degree $d_i^\gamma$ by $2$ and a double self-loop increases the degree by $4$. For $\gamma \in \caG_L^2$, let $\gamma^+$ be the set of $2$-multicycles (with self-loops) that can be obtained by adding a simple self-loop to $\gamma$. Then,
\[ \begin{split}
	&\sum_{\gamma' \in \gamma^+} \left( \frac{\F}{N} \right)^{n_1^{\gamma'} - \tilde n_1^{\gamma'}} \left( \frac{\G}{N^2} \right)^{n_2^{\gamma'} - \tilde n_2^{\gamma'}} \left( \frac{\cn_1'}{N} \right)^{\tilde n_1^{\gamma'}} \left( \frac{\cn_2'}{N^2} \right)^{\tilde n_2^{\gamma'}} Y(\gamma)^2 \\
	&\leq \left( \frac{\F}{N} \right)^{n_1^\gamma - \tilde n_1^\gamma} \left( \frac{\G}{N^2} \right)^{n_2^\gamma - \tilde n_2^\gamma} \left( \frac{\cn_1'}{N} \right)^{\tilde n_1^\gamma +1} \left( \frac{\cn_2'}{N^2} \right)^{\tilde n_2^\gamma} \sum_{\ell=1}^N \left(\E_\caY \left[ \indi(\Omega) y_\ell^2 \prod_{k=1}^N y_k^{d^{\gamma}_k} \right] \right)^2 \\
	&\leq \left( \frac{\F}{N} \right)^{n_1^\gamma - \tilde n_1^\gamma} \left( \frac{\G}{N^2} \right)^{n_2^\gamma - \tilde n_2^\gamma} \left( \frac{\cn_1'}{N} \right)^{\tilde n_1^\gamma +1} \left( \frac{\cn_2'}{N^2} \right)^{\tilde n_2^\gamma} (N m_4 +N^{4/5}) Y(\gamma)^2.
\end{split} \]
Similarly, for $\gamma \in \caG_L^2$, let $\gamma^{++}$ be the set of $2$-multicycles (with self-loops) that can be obtained by adding a double self-loop to $\gamma$. Then,
\[ \begin{split}
	&\sum_{\gamma' \in \gamma^{++}} \left( \frac{\F}{N} \right)^{n_1^{\gamma'} - \tilde n_1^{\gamma'}} \left( \frac{\G}{N^2} \right)^{n_2^{\gamma'} - \tilde n_2^{\gamma'}} \left( \frac{\cn_1'}{N} \right)^{\tilde n_1^{\gamma'}} \left( \frac{\cn_2'}{N^2} \right)^{\tilde n_2^{\gamma'}} Y(\gamma)^2 \\
	&\leq \left( \frac{\F}{N} \right)^{n_1^\gamma - \tilde n_1^\gamma} \left( \frac{\G}{N^2} \right)^{n_2^\gamma - \tilde n_2^\gamma} \left( \frac{\cn_1'}{N} \right)^{\tilde n_1^\gamma} \left( \frac{\cn_2'}{N^2} \right)^{\tilde n_2^\gamma +1} \sum_{\ell=1}^N \left(\E_\caY \left[ \indi(\Omega) y_\ell^4 \prod_{k=1}^N y_k^{d^{\gamma}_k} \right] \right)^2 \\
	&\leq \left( \frac{\F}{N} \right)^{n_1^\gamma - \tilde n_1^\gamma} \left( \frac{\G}{N^2} \right)^{n_2^\gamma - \tilde n_2^\gamma} \left( \frac{\cn_1'}{N} \right)^{\tilde n_1^\gamma} \left( \frac{\cn_2'}{N^2} \right)^{\tilde n_2^\gamma +1} (N m_8 +N^{4/5}) Y(\gamma)^2.
\end{split} \]

Let the $2$-submulticycle $\gamma'$ be the $2$-multicycle obtained by removing all self-loops in $\gamma$. By grouping the multicycles in $\caG_L^2$ according to their $2$-submulticycles, for any fixed $\epsilon > 0$ and for any $s \geq K_{\epsilon}$, 
\[ \begin{split}
	&\sum_{\gamma \in \caG_L^2, n_1^\gamma - \tilde n_1^\gamma > s} \left( \frac{\F}{N} \right)^{n_1^\gamma - \tilde n_1^\gamma} \left( \frac{\G}{N^2} \right)^{n_2^\gamma - \tilde n_2^\gamma} \left( \frac{\cn_1'}{N} \right)^{\tilde n_1^\gamma} \left( \frac{\cn_2'}{N^2} \right)^{\tilde n_2^\gamma} Y(\gamma)^2 \\
	&\leq \sum_{\gamma' \in \caG_C^2, n_1^{\gamma'} > s} \left( \frac{\F}{N} \right)^{n_1^{\gamma'}} \left( \frac{\G}{N^2} \right)^{n_2^{\gamma'}} Y(\gamma')^2 \sum_{r_1, r_2=0}^{\infty} \frac{1}{r_1! r_2!} (2 m_4 \cn_1')^{r_1} \left( \frac{2 m_8 \cn_2'}{N} \right)^{r_2} \\
	&\leq e^{2m_4\cn_1'} e^{2m_8\cn_2'/N} \sum_{\gamma' \in \caG_C^2, n_1^{\gamma'} > s} \left( \frac{\F}{N} \right)^{n_1^{\gamma'}} \left( \frac{\G}{N^2} \right)^{n_2^{\gamma'}} Y(\gamma')^2 \leq e^{2 m_4 \cn_1'} e^{2 m_8 \cn_2'/N} \epsilon.
\end{split} \]
This shows that any $2$-multicycle with $n_1^\gamma - \tilde n_1^\gamma \geq K_{\epsilon}$ is negligible when we prove \eqref{eq:tilde_X_2_estimate}. Similarly, we can also show that any $2$-multicycle with $n_2^\gamma - \tilde n_2^\gamma \geq \tilde K_{\epsilon}$ is negligible.

It remains to estimate
\[
	\sum_{\gamma \in \caG_L^2, n_1^\gamma - \tilde n_1^\gamma \leq K_{\epsilon}, n_2^\gamma - \tilde n_2^\gamma \leq \tilde K_{\epsilon}} \left( \frac{\F}{N} \right)^{n_1^\gamma - \tilde n_1^\gamma} \left( \frac{\G}{N^2} \right)^{n_2^\gamma - \tilde n_2^\gamma} \left( \frac{\cn_1'}{N} \right)^{\tilde n_1^\gamma} \left( \frac{\cn_2'}{N^2} \right)^{\tilde n_2^\gamma} (Y(\gamma)^2 -1).
\]
Following the proof of Lemma \ref{lem:1-cycle_universality} and Lemma \ref{lem:2-cycle_universality}, we separate the cases $Y(\gamma) \leq 1$ and $Y(\gamma) > 1$. Then, by applying both parts of Lemma \ref{lem:Y_gamma_dichotomy} for the cases, respectively, we can show that the equation above is $o(1)$. This proves \eqref{eq:tilde_X_2_estimate}.

To finish the proof of the desired lemma, we need to show that $\E[(X - \tilde X_2)^2] = o(1)$. It can be proved by adapting the proofs of Lemmas \ref{lem:3-cycle_estimate} and \ref{lem:4-cycle_estimate}. We omit the detail.
\end{proof}

\subsection{Decomposition for the Rademacher case} \label{subsec:Rademacher}

From Lemma \ref{lem:self-loop_universality}, $\E[(X - \tilde R_2)^2] = o(1)$, and thus we find that the limiting distribution of $X$ does not depend on the prior. More precisely, if we let
\beq \label{eq:R_SNR}
	R = \sum_{\gamma \in \caG_L^4} \frac{1}{N^{m^\gamma/2}} Z(\gamma),
\eeq
then
\[
	\E[(X - R)^2] \leq 2\E[(X - \tilde R_2)^2] + 2\E[(R - \tilde R_2)^2] = o(1).
\]

We now prove Proposition \ref{prop:main} by adapting the argument in \cite{AizenmanLebowitzRuelle}. Our goal is to show that $X$ can be approximated by a product involving simple cycles, double edges, and (single) self-loops. Our definition of the simple cycle is as follows:
\begin{defn}[simple cycle] \label{def:simple_cycle}
We say that a multicycle $\gamma$ is a simple cycle if $\gamma$ does not contain any self-loops, the degree of any node of $\gamma$ is at most $2$, and any two nodes with the degree $2$ are connected by edges. The set of all non-empty simple cycles is denoted by $\caG_S$.
\end{defn}
Recall the definition of $\tilde R_2$ in Lemma \ref{lem:self-loop_universality}. We have the following claim:
\beq \begin{split} \label{eq:claim_decomposition}
	\tilde R_2- \prod_{\gamma \in \caG_S} \left( 1+ \frac{Z(\gamma)}{N^{m^\gamma/2}} \right) \prod_{i<j} \left( 1+ \frac{P_{ij}^{(2)}}{N} \right) \prod_{k=1}^N \left( 1+ \frac{P_{d, kk}^{(1)}}{\sqrt{N}} \right) = o_\caW(1).
\end{split} \eeq
Note that for $\gamma \in \caG_S$, $Z(\gamma) = \prod_{i<j: m^\gamma_{ij} = 1} P_{ij}^{(1)}$.

To prove the claim \eqref{eq:claim_decomposition}, we first decompose a graph $\gamma \in \caG_L^2$ into $\gamma^{(2)}$ and $\gamma \setminus \gamma^{(2)}$, where $\gamma^{(2)}$ is the $2$-submulticycle of $\gamma$ introduced in the proof of Lemma \ref{lem:self-loop_universality}. In this decomposition, the multicycle $\gamma \setminus \gamma^{(2)}$ consists of self-loops only, and also $Z(\gamma) = Z(\gamma^{(2)}) Z(\gamma \setminus \gamma^{(2)})$. Thus, if we let $\caG_{SL}$ be the set of all multicycles consisting of self-loops only, then
\beq \label{eq:Rademacher_decomposition_1}
	\tilde R_2 = \left( \sum_{\gamma \in \caG_C^2} \frac{1}{N^{m^\gamma/2}} Z(\gamma) \right) \left( \sum_{\gamma' \in \caG_{SL}} \frac{1}{N^{m^{\gamma'}/2}} Z(\gamma') \right) = \left( \sum_{\gamma' \in \caG_{SL}} \frac{1}{N^{m^{\gamma'}/2}} Z(\gamma') \right) R_2.
\eeq
Further, by decomposing $\gamma' \in \caG_{SL}$ into the union of self-loops, we find that
\[
	\sum_{\gamma' \in \caG_{SL}} \frac{1}{N^{m^{\gamma'}/2}} Z(\gamma') = \prod_{k=1}^N \left( 1+ \frac{P_{d, kk}^{(1)}}{\sqrt{N}} + \frac{P_{d, kk}^{(2)}}{N} \right).
\]
In the right side of the equation above, we can easily see that the term involving $P_{d, kk}^{(2)}$ is negligible by naive power counting. Thus,
\beq \label{eq:Rademacher_decomposition_2}
	\sum_{\gamma' \in \caG_{SL}} \frac{1}{N^{m^{\gamma'}/2}} Z(\gamma') = \prod_{k=1}^N \left( 1+ \frac{P_{d, kk}^{(1)}}{\sqrt{N}} \right) + o_\caW(1).
\eeq

We next show that
\beq \label{eq:Rademacher_decomposition_3}
	R_2 = \left( \sum_{\gamma \in \caG_C^1} \frac{1}{N^{m^\gamma/2}} Z(\gamma) \right) \prod_{i<j} \left( 1+ \frac{P_{ij}^{(2)}}{N} \right) + o_\caW(1) = R_1 \prod_{i<j} \left( 1+ \frac{P_{ij}^{(2)}}{N} \right) + o_\caW(1).
\eeq
Notice that the product in the right side of \eqref{eq:Rademacher_decomposition_3},
\[
	\prod_{i<j} \left( 1+ \frac{P_{ij}^{(2)}}{N} \right) = \sum_{\gamma' \in \caG_D} \frac{1}{N^{m^{\gamma'}/2}} Z(\gamma'),
\]
where $\caG_D$ denotes the set of all multicycles consisting of double edges only, without self-loops. For $\gamma \in \caG_C^1$ and $\gamma' \in \caG_D$, we define the sum $\gamma + \gamma'$ to be the $3$-multicycle $\wt \gamma \equiv \wt \gamma(\gamma, \gamma')$ such that $m^{\wt\gamma}_{ij} = m^\gamma_{ij} + m^{\gamma'}_{ij}$. Note that any $3$-multicycle can be uniquely decomposed into the sum of a multicycle in $\caG_C^1$ and a multicycle in $\caG_D$. We also define
\[
	\wt Z(\wt \gamma(\gamma, \gamma')) := \left( \prod_{i<j} P_{ij}^{(m^\gamma_{ij})} P_{ij}^{(m^{\gamma'}_{ij})} \right).
\]
Then,
\[ \begin{split}
	\left( \sum_{\gamma \in \caG_C^1} \frac{1}{N^{m^\gamma/2}} Z(\gamma) \right) \prod_{i<j} \left( 1+ \frac{P_{ij}^{(2)}}{N} \right) = \sum_{\wt \gamma \in \caG_C^3} \frac{1}{N^{m^\gamma/2}} \wt Z(\wt \gamma) =: R_3.
\end{split} \]
Recall the definition of $X_3$ in \eqref{eq:object_3}. If we consider the case where the prior is Rademacher and $P^{(3)}_{ij} = P^{(1)}_{ij} P^{(2)}_{ij}$, then $X_3$ is equal to $R_3$. (Note that we can check that the assumptions on $P^{(3)}_{ij}$ holds in this case since $\E[P^{(3)}_{ij}] = \E[P^{(1)}_{ij} P^{(2)}_{ij}] = 0$ and $\E[P^{(2)}_{ij} P^{(3)}_{ij}] = \E[P^{(1)}_{ij} (P^{(2)}_{ij})^2] = 0$.) Then, from the $3$-multicycle estimate, Lemma \ref{lem:3-cycle_estimate}, we find that \eqref{eq:Rademacher_decomposition_3} holds.

It remains to prove that
\beq \label{eq:Rademacher_decomposition_4}
	R_1 = \prod_{\gamma \in \caG_S} \left( 1+ \frac{Z(\gamma)}{N^{m^\gamma/2}} \right) + o_\caW(1).
\eeq
We can prove \eqref{eq:Rademacher_decomposition_4} by exactly following the first part of the proof of Proposition 2.2 in \cite{AizenmanLebowitzRuelle}. We now find from \eqref{eq:Rademacher_decomposition_1}, \eqref{eq:Rademacher_decomposition_2}, \eqref{eq:Rademacher_decomposition_3}, and \eqref{eq:Rademacher_decomposition_4} that the claim \eqref{eq:claim_decomposition} holds.

With the claim \eqref{eq:claim_decomposition}, we are now ready to finish the proof of Proposition \ref{prop:main}.
\begin{proof}[Proof of Proposition \ref{prop:main}]
Taking logarithm of the product in the left side of \eqref{eq:claim_decomposition},
\[ \begin{split}
	&\log \prod_{\gamma \in \caG_S} \left( 1+ \frac{Z(\gamma)}{N^{m^\gamma/2}} \right) \prod_{i<j} \left( 1+ \frac{P_{ij}^{(2)}}{N} \right) \prod_{k=1}^N \left( 1+ \frac{P_{d, kk}^{(1)}}{\sqrt{N}} \right) \\
	&= \sum_{\gamma \in \caG_S} \log\left( 1+ \frac{Z(\gamma)}{N^{m^\gamma/2}} \right) + \sum_{i<j} \log \left( 1+ \frac{P_{ij}^{(2)}}{N} \right) + \sum_{k=1}^N \log \left( 1+ \frac{P_{d, kk}^{(1)}}{\sqrt{N}} \right) =: L_1 + L_2 + L_3.
\end{split} \]

To prove that $L_1$ converges to a Gaussian, we follow the analysis in the cluster expansion \cite{AizenmanLebowitzRuelle}. Consider
\[
	L_1 = \sum_{\gamma \in \caG_S} \log\left( 1+ \frac{Z(\gamma)}{N^{m^\gamma/2}} \right) = \sum_{\gamma \in \caG_S} \left( \frac{Z(\gamma)}{N^{m^\gamma/2}} - \frac{1}{2} \left(\frac{Z(\gamma)}{N^{m^\gamma/2}} \right)^2 \right) + o_\caW(1). 
\]
Since the number of distinct simple cycles with the length $s$ is $N(N-1) \dots (N-s+1)/(2s)$, by Wick's identity for the moments of the Gaussian distribution, we find
\[
	L_1 \Rightarrow \caN(-\rho_1, 2\rho_1),
\]
where
\[
	\rho_1 = \frac{1}{2} \sum_{s=3}^{\infty} \frac{\F^s}{2s} = -\frac{1}{4} \left( \log(1-\F) + \F + \frac{\F^2}{2} \right).
\]
See Lemma 3.1 in \cite{AizenmanLebowitzRuelle} for more detail.

For $L_2$, since $P_{ij}^{(2)}$'s are i.i.d., we can simply apply the central limit theorem and the law of large numbers to find that
\[
	L_2 = \sum_{i<j} \log \left( 1+ \frac{P_{ij}^{(2)}}{N} \right) = \sum_{i<j} \left( \frac{P_{ij}^{(2)}}{N} - \frac{1}{2} \left( \frac{P_{ij}^{(2)}}{N} \right)^2 \right) + o_\caW(1) \Rightarrow \caN(-\rho_2, 2\rho_2),
\]
where $\rho_2 = \G/4$. Similarly, since $P_{d, kk}^{(1)}$'s are i.i.d., again from the central limit theorem and the law of large numbers,
\[
	L_3 = \sum_{k=1}^N \log \left( 1+ \frac{P_{d, kk}^{(1)}}{\sqrt{N}} \right) = \sum_{k=1}^N \left( \frac{P_{d, kk}^{(1)}}{\sqrt{N}} - \frac{1}{2} \left( \frac{P_{d, kk}^{(1)}}{\sqrt{N}} \right)^2 \right) + o_\caW(1) \Rightarrow \caN(-\rho_3, 2\rho_3),
\]
where $\rho_3 = \cn_1'/2$. 

Since $P_{ij}^{(1)}$ and $P_{ij}^{(2)}$ are orthogonal to each other and $P_{d, kk}^{(1)}$ is independent from $P_{ij}^{(1)}$ and $P_{ij}^{(2)}$, from \eqref{eq:claim_decomposition}, we find that $\log X$ converges in distribution to Gaussian with the mean $(-\rho_1 - \rho_2 - \rho_3)$ and the variance $2(\rho_1 + \rho_2 + \rho_3)$. This completes the proof of Proposition \ref{prop:main}.
\end{proof}

\section{Technical details} \label{sec:proofs}

In this section, we provide several technical details, mostly the proofs of the lemmas used in the previous sections.

\begin{proof}[Proof of Lemma \ref{lem:Gaussian_second_moment}]
We adapt the idea of the proof of Theorem 3.10 in \cite{Perry2018}. To prove the first part of the lemma, we notice that $\sqrt{N} \langle \bsx, \bsx' \rangle$ converges in distribution to $\caN(0, 1)$ by the central limit theorem. Thus, for the event $\Omega_N$ in Assumption \ref{assump:second_moment}, by choosing $\Omega_N' = \Omega_N$
\[
	\indi(\Omega_N' (\bsx)) \indi(\Omega_N' (\bsx')) \exp \left( \frac{N \SNR \langle \bsx, \bsx' \rangle^2}{2} \right) \Rightarrow \exp \left( \frac{\SNR \chi_1^2}{2} \right),
\]
where $\chi_1^2$ is the chi-squared distribution with the degree of freedom $1$. Further, if we let $\delta>0$ be a small constant satisfying $\SNR(1+\delta) < \SNR_c$, then
\[ \begin{split}
	&\E_{\bsx, \bsx' \sim \caX} \left[ \left( \indi(\Omega_N' (\bsx)) \indi(\Omega_N' (\bsx')) \exp \left( \frac{N \SNR \langle \bsx, \bsx' \rangle^2}{2} \right) \right)^{1+\delta} \right] \\
	&\leq \E_{\bsx, \bsx' \sim \caX} \left[ \indi(\Omega_N' (\bsx)) \indi(\Omega_N' (\bsx')) \exp \left( \frac{N \SNR_c \langle \bsx, \bsx' \rangle^2}{2} \right) \right] = O(1).
\end{split} \]
Thus, we have the convergence in $L^{1+\delta}$, which implies the uniform integrability and also, together with the convergence in distribution, implies the convergence in expectation. The limiting value, $(1-\SNR)^{-1/2}$, can be readily computed from the moment generating function of the chi-squared distribution.

To prove the second part of the lemma, we define $\Omega_N'(\bsx) = \Omega_N (\bsx) \cap \{ |\sum_i x_i^2 - 1| \leq N^{-1/3} \}$. Then, $\p(\Omega_N'^c) = o(1)$. Since $\| \bsx \|$ converges to $1$ in probability, we again find that
\[
	\indi(\Omega_N' (\bsx)) \indi(\Omega_N' (\bsx')) \exp \left( \frac{N \SNR \langle \bsx, \bsx' \rangle^2}{2\|\bsx\|^2\|\bsx'\|^2} \right) \Rightarrow \exp \left( \frac{\SNR \chi_1^2}{2} \right).
\]
Let $\delta' > 0$ be a small constant such that $\SNR (1+\delta') / \| \bsx \|^2 \| \bsx' \|^2 < \SNR_c$ on the event $\{ |\sum_i x_i^2 - 1| \leq N^{-1/3} \} \cap \{ |\sum_i (x'_i)^2 - 1| \leq N^{-1/3} \}$, for any sufficiently large $N$. We then have the convergence in $L^{1+\delta}$ as in the proof of the first part of the lemma. We now proceed as in the first part and conclude that the second part of the lemma holds.
\end{proof}

\begin{proof}[Proof of Lemma \ref{lem:ALR_cutoff}]
	We closely follow the proof of Lemma 3.3 in \cite{AizenmanLebowitzRuelle}. Suppose that $\gamma \in \caG_C^1$. We can decompose it into the union $\gamma_1 \cup \gamma_2 \cup \dots \cup \gamma_n$ where each $\gamma_i$ ($i=1, 2, \dots, n$) is a simple cycle defined in Definition \ref{def:simple_cycle}. To see this, we choose a node $i_1$ with the highest degree in $\gamma$ and choose another node $i_2$ adjacent to $i_1$. We repeat it to choose $i_3, i_4, \dots$ with a rule that whenever $i_k$ is adjacent to $i_1$, we choose $i_{k+1} = i_1$; since $\gamma$ is a $1$-multicycle, this is always possible. By construction, the resulting graph with the edges $(i_1, i_2), (i_2, i_3), \dots, (i_k, i_{k+1})$ is a $1$-multicycle, which we call $\gamma'$, and the degree of $i_1$ in $\gamma'$, $d_{i_1}^{\gamma'}$ is $2$. If the degree $d_{i_1}^\gamma$ was $2$, then $\gamma'$ must be simple and we let $\gamma_1 = \gamma'$. Otherwise, we repeat the same procedure with $\gamma'$. Since $d_{i_1}^{\gamma'} < d_{i_1}^{\gamma}$, we have $n^{\gamma'} < n^{\gamma}$ and the procedure will be finished. After decomposing $\gamma'$, we repeat the same procedure with $\gamma\setminus \gamma'$, and we get the decomposition we desired.

Since $n^{\gamma} = \sum_{i=1}^n n^{\gamma_i}$ with the decomposition, we have the inequality
\beq \label{eq:cutoff_a_priori}
	\sum_{\gamma \in \caG_C^1} \left( \frac{\alpha}{N} \right)^{n^{\gamma}} \leq \prod_{\gamma' \in \caG_S} \left(1 + \left( \frac{\alpha}{N} \right)^{n^{\gamma'}} \right).
\eeq
(The inequality \eqref{eq:cutoff_a_priori} is strict, since the right side contains terms of a form
\[
	\left( \frac{\alpha}{N} \right)^{n^{\gamma'_1}} \left( \frac{\alpha}{N} \right)^{n^{\gamma'_2}} \dots \left( \frac{\alpha}{N} \right)^{n^{\gamma'_k}}
\]
where some of the $\gamma'_i$'s share a common edge.) Note that the inequality \eqref{eq:cutoff_a_priori} holds even when $\alpha \geq 1$.

Now, consider a function
\[
	\Phi(k) =
	\begin{cases}
		e^{\delta s} & \text{if } k \geq s \\
		0 & \text{if } k < s
	\end{cases},
\]
with
\[
	\delta = \log \left[ (\alpha)^{-1} \left( 1 - \frac{\log s}{2s} \right) \right],
\]
or equivalently $e^{\delta} \alpha = 1 - (\log s)/(2s)$. Then, since $\Phi(k) \leq e^{\delta k}$, 
\[
	e^{\delta s} \sum_{\gamma \in \caG_C^1, n^\gamma \geq s} \left( \frac{\alpha}{N} \right)^{n^{\gamma}} = \sum_{\gamma \in \caG_C^1} \Phi(n^{\gamma}) \left( \frac{\alpha}{N} \right)^{n^{\gamma}} \leq \sum_{\gamma \in \caG_C^1} \left( \frac{e^{\delta} \alpha}{N} \right)^{n^{\gamma}}.
\]
Thus, from \eqref{eq:cutoff_a_priori},
\[
	e^{\delta s} \sum_{\gamma \in \caG_C^1, n^\gamma \geq s} \left( \frac{\alpha}{N} \right)^{n^{\gamma}} \leq \prod_{\gamma' \in \caG_S} \left(1 + \left( \frac{e^{\delta} \alpha}{N} \right)^{n^{\gamma'}} \right) \leq \exp \left( \sum_{\gamma' \in \caG_S} \left( \frac{e^{\delta} \alpha}{N} \right)^{n^{\gamma'}} \right).
\]
Since the number of distinct simple cycles with length $k$ is $N(N-1) \dots (N-k+1) / (2k)$, for $k \geq 3$, we obtain
\[ \begin{split}
	&\sum_{\gamma \in \caG_C^1, n^\gamma \geq s} \left( \frac{\alpha}{N} \right)^{n^{\gamma}} \leq e^{-\delta s} \exp \left( \sum_{k=3}^{\infty} \left( \frac{(e^\delta \alpha)^k}{2k} \right) \right) \\
	&= \exp \left( -\delta s + \frac{1}{2} \left( -\log (1-e^\delta \alpha) -e^\delta \alpha - \frac{(e^\delta \alpha)^2}{2} \right) \right) \\
	&= \exp \left( s( \log \alpha) - s \log \left( 1 - \frac{\log s}{2s} \right) -\frac{1}{2} \log \left( \frac{\log s}{2s} \right) - \left( 1 - \frac{\log s}{2s} \right) - \frac{1}{2} \left( 1 - \frac{\log s}{2s} \right)^2 \right) \\
	&= \exp \left( s(\log \alpha) + \log s - \log \log s + O(1) \right).
\end{split} \]
This proves the desired lemma.
\end{proof}

\begin{proof}[Proof of Lemma \ref{lem:Y_gamma_dichotomy}]
We separate the proof into two cases.

\textit{Case 1.} Suppose that $x_i = \frac{1}{\sqrt{N}} v_i y_i$. If all nodes of $\gamma$ are with degree $2$, 
\[
	Y(\gamma) = \E_\caY \left[ \indi(\Omega) \prod_{r=1}^{r_0} y_{i_r}^2 \right] \leq \E_\caY \left[ \prod_{r=1}^{r_0} y_{i_r}^2 \right] = 1
\]
and $1- Y(\gamma)= o(1)$ since $\E[\indi(\Omega^c)] = o(1)$. This proves the first part of the lemma. 

To prove the second part, assume that $\gamma$ has the nodes $i_1, \dots, i_r$ and the degree of the one of the node in $\gamma$ is $4$ or higher. Note that $\E_\caY[y_{i_\ell}^{d_{i_\ell}}] \geq 1$ for $\ell = 1, 2, \dots, r$, and since the prior is not Rademacher, one of them is larger than $1 +\delta$ for some $\delta > 0$. Then,
\[
	Y(\gamma) \geq (1+o(1)) \prod_{\ell=1}^r \E_\caY \left[y_{i_\ell}^{d_{i_\ell}^\gamma} \right] > 1+\frac{\delta}{2}
\]
if $N$ is sufficiently large. This proves the second part of the lemma.

\textit{Case 2.} Suppose that $\frac{x_i}{\| \bsx \|} = \frac{1}{\sqrt{N}} v_i y_i$.

We first prove the second part of the lemma in this case. Recall the definition of $\Omega$ in Definition \ref{defn:high-probability}. If $\gamma$ has the nodes $i_1, \dots, i_r$, then
\[
	Y(\gamma) = \E_\caX \left[ N^{m^\gamma} \indi(\Omega) \prod_{\ell=1}^r \left( x_{i_\ell}^{d_{i_\ell}^\gamma} / \| \bsx \|^{d_{i_\ell}^\gamma} \right) \right] \geq N^{m^\gamma} (1+o(1)) \prod_{\ell=1}^r \E_\caX \left[x_{i_\ell}^{d_{i_\ell}^\gamma} \right]
\]
Since $x_i$'s are i.i.d., we can now proceed as in the proof for \textit{Case 1.} to prove the second part of the lemma for \textit{Case 2.}
 
To prove the first part, we use the induction on the number of nodes in $\gamma$. When $\gamma$ has only one node, since $\sum_{i=1}^{N} y_i^2 = N$ and $\{y_i\}$ are exchangeable,
\[
	Y(\gamma) = \frac{1}{N} \E_\caY \left[ \indi(\Omega) \sum_{k=1}^{N} y_k^2 \right] = 1.
\]
Suppose that $Y(\gamma) \leq 1$ and $1-Y(\gamma) = o(1)$ for any $\gamma$ with $r$ nodes or less, whose all nodes are with degree $2$. For a graph with $(r+1)$ nodes (with degree $2$), assume without loss of generality that the nodes are $1, 2, \dots, (r+1)$. Then, since $\{y_i\}$ are exchangeable,
\[ \begin{split}
	&N \E_\caY \left[ \indi(\Omega) y_1^2 y_2^2 \dots y_r^2 \right] = \E_\caY \left[ \indi(\Omega) y_1^2 y_2^2 \dots y_r^2 \left( \sum_{i=1}^N y_i^2 \right) \right] \\
	&= r \E_\caY \left[ \indi(\Omega) y_1^4 y_2^2 \dots y_r^2 \right] + (N-r) \E_\caY \left[ \indi(\Omega) y_1^2 y_2^2 \dots y_r^2 y_{r+1}^2 \right].
\end{split} \]
The first term in the right side is bounded, since
\[
	\E_\caY \left[ \indi(\Omega) y_1^4 y_2^2 \dots y_r^2 \right] \leq (1+o(1)) N^{r+1} \E_\caX \left[ x_1^4 \right] \prod_{\ell=2}^r \E_\caY \left[ x_\ell^2 \right] = O(1).
\]
We now find that $Y(\gamma) \leq 1$ and $1-Y(\gamma) = o(1)$ for this case, since
\[ \begin{split}
	&\E_\caY \left[ \indi(\Omega) y_1^2 y_2^2 \dots y_r^2 y_{r+1}^2 \right] \\
	&= \E_\caY \left[ \indi(\Omega) y_1^2 y_2^2 \dots y_r^2 \right] - \frac{r}{N-r} \left( \E_\caY \left[ \indi(\Omega) y_1^4 y_2^2 \dots y_r^2 \right] - \E_\caY \left[ \indi(\Omega) y_1^2 y_2^2 \dots y_r^2 \right] \right) 
\end{split} \]
From the first part of the lemma and the induction hypothesis,
\[
	\E_\caY \left[ \indi(\Omega) y_1^2 y_2^2 \dots y_r^2 \right] \leq 1 < \E_\caY \left[ \indi(\Omega) y_1^4 y_2^2 \dots y_r^2 \right].
\]
This proves the first part for a graph with $(r+1)$ nodes and completes the proof of Lemma \ref{lem:Y_gamma_dichotomy}.
\end{proof}

\begin{proof}[Proof of Lemma \ref{lem:Rademacher_difference_positive}]
Assume the GOE disorder for the log likelihood ratio in Theorem \ref{thm:log_LR}, i.e.,
\[
	p^{(n)}_{ij} = \frac{(-1)^n \SNR^{n/2} p^{(n)}(\sqrt{N} W_{ij})}{n! p(\sqrt{N} W_{ij})} \quad (n=1, 2, 3, 4), \qquad p^{(n)}_{d, kk} = \frac{(-1)^n \SNR^{n/2} p_d^{(n)}(\sqrt{N} W_{kk})}{n! p_d(\sqrt{N} W_{kk})} \quad (n=1, 2).
\]
where $W_{ij}$ and $W_{kk}$ are independent Gaussian random variables with variances $N^{-1}$ and $2N^{-1}$, respectively. 

Recall that the left side of \eqref{eq:Gaussian_second_moment_1} or \eqref{eq:Gaussian_second_moment_2} is the second moment of the log likelihood ratio with GOE disorder, conditioned on $\Omega_N'$. (See Section \ref{subsec:threshold}.) Further, from the proof of Lemma \ref{lem:Gaussian_second_moment}, it is not hard to see that the conclusion of Lemma \ref{lem:Gaussian_second_moment} does not change even if $\Omega_N'$ is replaced by $\Omega$. Thus, from Lemma \ref{lem:Gaussian_second_moment} and the proof of Theorem \ref{thm:log_LR},
\beq \label{eq:Gaussian_second_moment}
	\E\left[\left(\sum_{\gamma \in \caG_L^4} \frac{1}{N^{m^\gamma/2}} Y(\gamma) Z(\gamma)\right)^2\right] = (1-\SNR)^{-1/2} + o(1).
\eeq
From the GOE assumption, $p_{ij}^{(n)}$ is a constant multiple of the $n$-th Hermite polynomial with respect to the Gaussian of the form $e^{-x^2/2}$ and $p_{d, kk}^{(n)}$ is a constant multiple of the $n$-th Hermite polynomial with respect to the Gaussian of the form $e^{-x^2/4}$. Due to the orthogonality of the Hermite polynomials, we can easily see that if $\gamma$ and $\gamma'$ are distinct multicycles, then they are orthogonal. Thus, we find from \eqref{eq:Gaussian_second_moment} that
\[
	\sum_{\gamma \in \caG_L^4} \frac{1}{N^{m^\gamma}} Y(\gamma)^2 \E \left[ Z(\gamma)^2 \right] = (1-\SNR)^{-1/2} + o(1).
\]
In particular, considering the Rademacher prior,
\beq \label{eq:Rademacher_limit}
	\sum_{\gamma \in \caG_L^4} \frac{1}{N^{m^\gamma}} \E \left[ Z(\gamma)^2 \right] = (1-\SNR)^{-1/2} + o(1).
\eeq
Comparing the equations above, we obtain
\beq \label{eq:Rademacher_difference}
	\sum_{\gamma \in \caG_L^4} \frac{1}{N^{m^\gamma}} \left( Y(\gamma)^2 -1 \right) \E \left[ Z(\gamma)^2 \right] = o(1).
\eeq

To proceed further, we introduce the $4$-multicycle cutoff property for Rademacher prior, which is the following lemma.
\begin{lem}[4-multicycle cutoff property for Rademacher prior with GOE disorder] \label{lem:4-cycle_Rademacher_cutoff}
	Assume the GOE disorder for the log likelihood ratio in Theorem \ref{thm:log_LR}. If $\SNR<\SNR_c$, then for any fixed $\epsilon>0$, there exists ($N$-independent) constant $K_\epsilon''' \equiv K_\epsilon'''(\SNR)$ such that for any $s>K_\epsilon'''$, and for any sufficiently large $N$,
\[ 
	\sum_{\gamma\in \caG_L^4, n^\gamma \geq s} \frac{1}{N^{m^\gamma}} \E[Z(\gamma)^2] < \epsilon.
\]
\end{lem}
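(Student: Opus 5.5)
We plan to follow the exponential-tilting argument used in the proof of Lemma \ref{lem:ALR_cutoff}. This time we compare with the full second-moment identity \eqref{eq:Rademacher_limit} at a slightly larger signal-to-noise ratio, instead of with a product over simple cycles.

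\textbf{Step 1: explicit weights.} For GOE disorder we have $p^{(n)}/p=(-1)^n\mathrm{He}_n$, so $p^{(n)}_{ij}=\SNR^{n/2}\mathrm{He}_n(\sqrt N W_{ij})/n!$. Hence $\E[(p^{(n)}_{ij})^2]=\SNR^n/n!$. Similarly $\E[(p^{(n)}_{d,kk})^2]=c_n\SNR^n$ for explicit constants $c_n$ coming from the variance-$2$ Hermite polynomials. It follows that, for every $\gamma\in\caG_L^4$,
\[
\frac{1}{N^{m^\gamma}}\E[Z(\gamma)^2]=\prod_{i<j}\frac{1}{m^\gamma_{ij}!}\Big(\frac{\SNR}{N}\Big)^{m^\gamma_{ij}}\prod_k c_{m^\gamma_{kk}}\Big(\frac{\SNR}{N}\Big)^{m^\gamma_{kk}} =: w_\SNR(\gamma).
\]
This is a nonnegative quantity of the form $\SNR^{m^\gamma}\cdot(\text{something independent of }\SNR)$. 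In particular, for any $t\ge 1$ we have $t^{m^\gamma}w_\SNR(\gamma)=w_{t\SNR}(\gamma)$.

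\textbf{Step 2: tilting.} Choose $\delta>0$ small enough that $\SNR':=e^{\delta}\SNR<\SNR_c$; this is possible since $\SNR<\SNR_c$. Every edge of $\gamma$ carries multiplicity at least $1$, so $n^\gamma\le m^\gamma$. Therefore
\[
e^{\delta s}\sum_{\gamma\in\caG_L^4,\,n^\gamma\ge s}w_\SNR(\gamma)\le\sum_{\gamma\in\caG_L^4}e^{\delta n^\gamma}w_\SNR(\gamma)\le\sum_{\gamma\in\caG_L^4}e^{\delta m^\gamma}w_\SNR(\gamma)=\sum_{\gamma\in\caG_L^4}w_{\SNR'}(\gamma).
\]
All terms are nonnegative, which is what justifies both inequalities.

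\textbf{Step 3: bound at $\SNR'$.} The right side is exactly the left side of \eqref{eq:Rademacher_limit} with $\SNR$ replaced by $\SNR'$. Since $\SNR'<\SNR_c$, \eqref{eq:Rademacher_limit} (derived from Lemma \ref{lem:Gaussian_second_moment} and valid for every fixed ratio below $\SNR_c$) shows that this sum equals $(1-\SNR')^{-1/2}+o(1)$. Hence it is at most $C:=2(1-\SNR')^{-1/2}$ for all sufficiently large $N$. We conclude that the tail is bounded by $Ce^{-\delta s}$. Choosing $K'''_\epsilon>\delta^{-1}\log(C/\epsilon)$ then gives the lemma, with $N$ large depending only on $\SNR'$.

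\textbf{Main obstacle.} The delicate point is the use of \eqref{eq:Rademacher_limit} at the tilted ratio $\SNR'$. One must check that, for the Rademacher prior, the truncation event and the expansion used to derive \eqref{eq:Rademacher_limit} remain valid at $\SNR'$, so that the $o(1)$ holds for this fixed $\SNR'<\SNR_c$ (note $\SNR_c\le 1$ here).

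If this is in doubt, we would fall back on a direct computation. For the Rademacher prior and the unrestricted multiplicities, $\sum_\gamma w_{\SNR'}(\gamma)$ equals
\[
\E_{\bsv,\bsv'}\exp\Big(\tfrac{\SNR'}{N}\sum_{i<j}v_iv_i'v_jv_j'+\text{diag}\Big),
\]
because the even-degree constraint is exactly the $\E_{\bsv,\bsv'}$ selection. This equals $\E\exp(\SNR' N\langle\bsx,\bsx'\rangle^2/2)\cdot O(1)$, which is bounded for $\SNR'<1$ by sub-Gaussianity. The restriction to multiplicities at most $4$ only drops nonnegative terms, so the bound carries over to the sum over $\caG_L^4$.
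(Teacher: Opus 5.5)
Your proof is correct, and it takes a genuinely different route from the paper's. The paper argues structurally. It bounds the tail by two pieces: the part over $\caG_L^2$ with $n^\gamma\ge s$, and the entire sum over $\caG_L^4\setminus\caG_L^2$. The latter is grouped into similarity classes (Definition~\ref{def:similar}) and shown to be $O(N^{-1})$ times the $\caG_L^2$ sum, which is $O(1)$ by \eqref{eq:Rademacher_limit} at $\SNR$ itself. The former is split according to which of $n_1^\gamma-\tilde n_1^\gamma$, $n_2^\gamma-\tilde n_2^\gamma$, $\tilde n_1^\gamma$, $\tilde n_2^\gamma$ is large: double edges and self-loops are handled by factorization and tails of exponential series, and single edges by Lemma~\ref{lem:ALR_cutoff}. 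You instead observe that for GOE disorder the weight $N^{-m^\gamma}\E[Z(\gamma)^2]$ is exactly $\SNR^{m^\gamma}$ times an $\SNR$-free factor. A single global tilt $e^{\delta n^\gamma}\le e^{\delta m^\gamma}$ then turns the tail into the full second-moment sum at $e^{\delta}\SNR$ and gives the explicit decay $Ce^{-\delta s}$. This is the tilting idea of Lemma~\ref{lem:ALR_cutoff} applied once to the whole sum rather than to simple cycles, and it makes the similarity-class bookkeeping unnecessary. The price is that you need the $O(1)$ bound at a strictly larger ratio. That costs nothing here, since $\SNR<\SNR_c\le 1$, and your ``fallback'' supplies it rigorously and uniformly in $N$. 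With $\sigma_i:=v_iv_i'$ i.i.d.\ Rademacher, nonnegativity shows the sum over $\caG_L^4$ is at most $\prod_k(1+c_1\SNR'/N+c_2\SNR'^2/N^2)\cdot\E\exp(\frac{\SNR'}{N}\sum_{i<j}\sigma_i\sigma_j)$. The expectation equals $e^{-\SNR'/2}\E\exp(\SNR'(\sum_i\sigma_i)^2/(2N))\le e^{-\SNR'/2}(1-\SNR')^{-1/2}$ by Hoeffding plus a Gaussian linearization. I would present that computation as the main argument rather than citing \eqref{eq:Rademacher_limit}, since it avoids transferring the second moment of the true likelihood ratio to its truncated polynomial expansion. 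What the paper's route buys in exchange is modularity: it uses only the product form $\F^{n_1}\G^{n_2}\cdots$ of the weights and the bound at $\SNR$ itself. Yours relies on the exact homogeneity of the Hermite weights in $\SNR$, which is special to the Gaussian disorder assumed in this lemma.
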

We will prove Lemma \ref{lem:4-cycle_Rademacher_cutoff} later in this section. With Lemma \ref{lem:4-cycle_Rademacher_cutoff}, we can estimate the sum in \eqref{eq:Rademacher_difference} for the case $Y(\gamma) \leq 1$ and $n^\gamma \geq K_\epsilon'''$ by
\[ \begin{split}
	&\sum_{\gamma\in \caG_L^4, n^\gamma \geq K_\epsilon''', Y(\gamma) \leq 1} \frac{1}{N^{m^\gamma}} \left( 1 -Y(\gamma)^2 \right) \E \left[ Z(\gamma)^2 \right] \leq \sum_{\gamma\in \caG_L^4, n^\gamma \geq K_\epsilon''', Y(\gamma) \leq 1} \frac{1}{N^{m^\gamma}} \E \left[ Z(\gamma)^2 \right] \\
	&\leq \sum_{\gamma\in \caG_L^4, n^\gamma \geq K_\epsilon'''} \frac{1}{N^{m^\gamma}} \E \left[ Z(\gamma)^2 \right] < \epsilon.
\end{split} \]
If $Y(\gamma) \leq 1$ and $n^\gamma < K_\epsilon'''$, we find from Lemma \ref{lem:Y_gamma_dichotomy} that all nodes of $\gamma$ are with degree $2$. Since $1-Y(\gamma) = o(1)$ in this case, by following the argument we used to derive \eqref{eq:normalized_1-cycle_first_term_2}, we find that
\[
	\sum_{\gamma\in \caG_L^4, n^\gamma \leq K_\epsilon''', Y(\gamma) \leq 1} \frac{1}{N^{m^\gamma}} \left( 1 -Y(\gamma)^2 \right) \E \left[ Z(\gamma)^2 \right] < \epsilon.
\]
Thus, we find that
\[
	\sum_{\gamma\in \caG_L^4, Y(\gamma) \leq 1} \frac{1}{N^{m^\gamma}} \left( 1 -Y(\gamma)^2 \right) \E \left[ Z(\gamma)^2 \right] = o(1).
\]
Plugging it into \eqref{eq:Rademacher_difference}, we also find that
\[
	\sum_{\gamma\in \caG_L^4, Y(\gamma) > 1} \frac{1}{N^{m^\gamma}} \left( Y(\gamma)^2 -1 \right) \E \left[ Z(\gamma)^2 \right] = o(1).
\]
Since the summand is positive, by restricting $\gamma$ to be in $\caG_C^1$, we find that the desired lemma holds.
\end{proof}

\begin{proof}[Proof of Lemma \ref{lem:cutoff_general}]
We again separate the sum into the case $Y(\gamma) \leq 1$ and the case $Y(\gamma) > 1$, which leads us to
\[ 
	\sum_{\gamma\in \caG_C^1, n^\gamma \geq s} \left( \frac{\F}{N} \right)^{n^{\gamma}} Y(\gamma)^2 
	= \sum_{\gamma\in \caG_C^1, Y(\gamma)\leq 1, n^\gamma \geq s} \left( \frac{\F}{N} \right)^{n^{\gamma}} Y(\gamma)^2 + \sum_{\gamma\in \caG_C^1, Y(\gamma) > 1, n^\gamma\geq s} \left( \frac{\F}{N} \right)^{n^{\gamma}} Y(\gamma)^2. 
\]
For the first term in the right side,	from Lemma \ref{lem:ALR_cutoff}, for given $\epsilon > 0$, there exists $K_\epsilon$ such that
\[ 
	\sum_{\gamma\in \caG_C^1, n^\gamma\ge K_\epsilon} \left( \frac{\F}{N} \right)^{n^{\gamma}} < \epsilon.
\]
Then,
\beq \label{eq:cutoff_general_target1}
	\sum_{\gamma\in \caG_C^1, Y(\gamma)\leq 1, n^\gamma \geq s} \left( \frac{\F}{N} \right)^{n^{\gamma}} Y(\gamma)^2 \leq \sum_{\gamma\in \caG_C^1, Y(\gamma)\leq 1, n^\gamma \geq s} \left( \frac{\F}{N} \right)^{n^{\gamma}} < \epsilon.
\eeq

For the case $Y(\gamma) > 1$, from Lemma \ref{lem:Rademacher_difference_positive}, for given $\epsilon > 0$, there exists $K_\epsilon'$ such that
\[
	\sum_{\gamma\in \caG_C^1, Y(\gamma)>1, n^\gamma \geq K'_\epsilon} \left( \frac{\F}{N} \right)^{n^{\gamma}} (Y(\gamma)^2-1)< \epsilon.
\]
Now, choosing $s>\max\{K_\epsilon, K'_\epsilon\}$, we find that
\[ \begin{split}
		&\sum_{\gamma\in \caG_C^1, Y(\gamma)>1, n^\gamma \geq s} \left( \frac{\F}{N} \right)^{n^{\gamma}} Y(\gamma)^2 \\ 
		&\leq \sum_{\gamma\in \caG_C^1, Y(\gamma)>1, n^\gamma \geq K_\epsilon'} \left( \frac{\F}{N} \right)^{n^{\gamma}} (Y(\gamma)^2-1)+ \sum_{\gamma\in \caG_C^1, Y(\gamma)>1, n^\gamma \geq K_\epsilon} \left( \frac{\F}{N} \right)^{n^{\gamma}} < 2\epsilon.
\end{split} \]
Combining it with \eqref{eq:cutoff_general_target1}, we prove the desired lemma.
\end{proof}

\begin{proof}[Proof of Lemma \ref{lem:4-cycle_Rademacher_cutoff}]
	We closely follow the proof of Lemma \ref{lem:4-cycle_estimate}.
	Since
	\begin{equation}\label{eq:Rademacher_cutoff_GOE}
		\sum_{\gamma\in\caG_L^4, n^{\gamma} \ge s}\frac{1}{N^{m^\gamma}}\E[Z(\gamma)^2] \le \sum_{\gamma\in\caG_L^2, n^{\gamma}\ge s} \frac{1}{N^{m^\gamma}}\E[Z(\gamma)^2]+\sum_{\gamma\in\caG_L^4\setminus\caG_L^2}\frac{1}{N^{m^\gamma}}\E[Z(\gamma)^2].
	\end{equation}
	For the second term, we group the multicycles in $\caG_L^4\setminus\caG_L^2$ according to their equivalence class defined through the similarity in Definition \ref{def:similar}. Then we get
\begin{align*}
	 &\sum_{\gamma\in \caG_L^4\setminus \caG_L^2}\frac{1}{N^{m^\gamma}}\E[Z(\gamma)^2]=\sum_{\gamma\in\caG_L^2}\E\left[\left(\sum_{\gamma'\in\caG_L^4\setminus\caG_L^2, \gamma'\sim\gamma}\frac{1}{N^{m^{\gamma'} / 2}}Z(\gamma') \right)^2 \right] \\
	 &\le \sum_{\gamma'\in \caG_L^2}\left( \frac{\F}{N} \right)^{n_1^{\gamma'}-\tilde n_1^{\gamma'}}\left( \frac{\G}{N^2} \right)^{n_2^{\gamma'}-\tilde n_2^{\gamma'}}\left( \frac{\F'}{N} \right)^{\tilde n_1^{\gamma'}}\left( \frac{\G'}{N^2} \right)^{\tilde n_2^{\gamma'}} \left[\left(1+\frac{\sqrt{\cn_3}}{N\sqrt{\F}} \right)^{n_1^{\gamma'}} \left( 1+\frac{\sqrt{\cn_4}}{N\sqrt{\G}} \right)^{n_2^{\gamma'}}-1 \right]^2 \\
	 &\le \frac{C}{N}\sum_{\gamma'\in \caG_L^2}\left( \frac{\F}{N} \right)^{n_1^{\gamma'}-\tilde n_1^{\gamma'}}\left( \frac{\G}{N^2} \right)^{n_2^{\gamma'}-\tilde n_2^{\gamma'}}\left( \frac{\F'}{N} \right)^{\tilde n_1^{\gamma'}}\left( \frac{\G'}{N^2} \right)^{\tilde n_2^{\gamma'}},
\end{align*}
for some constant $C$. Since the sum is bounded by $(1-\SNR)^{-\frac{1}{2}}+o(1)$ from \eqref{eq:Rademacher_difference}, we find that the second term of \eqref{eq:Rademacher_cutoff_GOE} is $o(1)$.	

Now for the first term, since $n_\gamma = (n_1^\gamma-\tilde n_1^\gamma)+(n_2^\gamma-\tilde n_2^\gamma)+\tilde n_1^\gamma+\tilde n_2^\gamma$ for $\gamma\in \caG_L^2$, we have
\begin{equation}\label{eq:2-cycle_Rademacher_division}
 \begin{split} 
	\sum_{\gamma\in\caG_L^2, n^\gamma >4s} \frac{1}{N^{m^\gamma}} \E[Z(\gamma)^2] 
	&\leq \sum_{\gamma\in\caG_L^2, n_1^\gamma-\tilde n_1^\gamma >s} \frac{1}{N^{m^\gamma}} \E[Z(\gamma)^2] + \sum_{\gamma\in\caG_L^2, n_2^\gamma-\tilde n_2^\gamma >s} \frac{1}{N^{m^\gamma}} \E[Z(\gamma)^2]  \\																																								
	&\quad + \sum_{\gamma\in\caG_L^2, \tilde n_1^\gamma >s} \frac{1}{N^{m^\gamma}} \E[Z(\gamma)^2] + \sum_{\gamma\in\caG_L^2, \tilde n_2^\gamma >s}\frac{1}{N^{m^\gamma}} \E[Z(\gamma)^2].
 \end{split}
\end{equation}
Our goal is to show that the four sums in the right side of \eqref{eq:2-cycle_Rademacher_division} are small. We have
\[
	\frac{1}{N^{m^\gamma}} \E[Z(\gamma)^2]= \left( \frac{\F}{N} \right)^{n_1^\gamma-\tilde n_1^\gamma}\left( \frac{\G}{N^2}\right)^{n_2^\gamma-\tilde n_2^\gamma} \left( \frac{\cn_1'}{N} \right)^{\tilde n_1^\gamma} \left( \frac{\cn_2'}{N^2} \right)^{\tilde n_2^\gamma}.
\]
First, we consider cutoff for double self-loops, the estimate on the last term in the right side of \eqref{eq:2-cycle_Rademacher_division}. We group the graphs in $\caG_L^2$ according to their submulticycles with 2-multicycles and single self-loops only. Then,
\[
	\sum_{\gamma\in \caG_L^2, \tilde n_2^\gamma >s} \frac{1}{N^{m^\gamma}} \E[Z(\gamma)^2] \leq \left(\sum_{\gamma'\in\caG_L^2, \tilde n_2^{\gamma'} = 0} \frac{1}{N^{m^\gamma}} \E[Z(\gamma)^2] \right) \left(\sum_{r=s}^{\infty}\frac{N^r}{r!}\left(\frac{\cn_2'}{N^2} \right)^r\right).
\]
The first term in the right side is bounded by $(1-\SNR)^{-\frac{1}{2}}+o(1) = O(1)$, and the second part is a tail part of the Taylor series of the $e^{\cn_2' / N}$. Thus, we can find an $N$-independent constant $s_1$ such that if $s > s_1$, the left-hand side is smaller than $\epsilon$. Similarly, the cutoff for the single self-loops, the third term in the right side of \eqref{eq:2-cycle_Rademacher_division}, is given by
\[ 
	\sum_{\gamma\in \caG_L^2, \tilde n_1^\gamma >s} \frac{1}{N^{m^\gamma}} \E[Z(\gamma)^2] \leq \left(\sum_{\gamma'\in\caG_L^2, \tilde n_1^{\gamma'} = 0} \frac{1}{N^{m^\gamma}} \E[Z(\gamma)^2] \right) \left(\sum_{r=s}^{\infty}\frac{N^r}{r!} \left(\frac{\F'}{N} \right)^r\right) 
\]
The second part is a tail part of the Taylor series of the $e^{\F'}$, and thus we can find an $N$-independent constant $s_2$ such that if $s>s_2$, the left-hand side is smaller than $\epsilon$. Similarly, the cutoff for the double edges, the second term in the right side of \eqref{eq:2-cycle_Rademacher_division}, we consider
\[ 
	\sum_{\gamma\in\caG_L^2, n_2^\gamma - \tilde n_2^\gamma > s} \frac{1}{N^{m^\gamma}} \E[Z(\gamma)^2] \leq \left(\sum_{\gamma'\in\caG_L^2, n_2^{\gamma'}-\tilde n_2^{\gamma'} = 0} \frac{1}{N^{m^\gamma}} \E[Z(\gamma)^2] \right) \left( \sum_{r=s}^{\infty}\frac{N^{2r}}{2^r r!}\left(\frac{\G}{N^2}\right)^r \right),
\]
and find an $N$-independent constant $s_3$ such that if $s > s_3$, the left-hand side is smaller than $\epsilon$. Lastly, the cutoff for the single edges, the first term in the right side of \eqref{eq:2-cycle_Rademacher_division},
\[ 
	\sum_{\gamma\in\caG_L^2, n_1^\gamma - \tilde n_1^\gamma > s} \frac{1}{N^{m^\gamma}} \E[Z(\gamma)^2] \leq \left(\sum_{\gamma'\in\caG_L^2, n_1^{\gamma'}-\tilde n_1^{\gamma'} = 0}\frac{1}{N^{m^\gamma}} \E[Z(\gamma)^2] \right) \left( \sum_{\gamma''\in\caG_C^1, n^{\gamma''} > s}\left( \frac{\F}{N} \right)^{m^{\gamma''}} \right).
\]
By Lemma \ref{lem:ALR_cutoff}, we can find an $N$-independent constant $s_4$ such that if $s>s_4$, the left-hand side is smaller than $\epsilon$. Now we can finish the proof of the desired lemma by letting $K_{\epsilon}''' = 4\max \{ s_1, s_2, s_3, s_4 \}$.
\end{proof}

\subsection*{Acknowledgement}
This work was partially supported by National Research Foundation of Korea under grant number RS-2023-NR076695.


\end{document}